\renewcommand{\Im}{\operatorname{Im}}
\newcommand{\sech}{\operatorname{sech}}
\newcommand{\defeq}{\stackrel{\rm{def}}{=}}
\newcommand{\R}{\mathbb R}
\newtheorem{theorem}{Theorem}
\newtheorem{proposition}{Proposition}[section]
\newtheorem{conjecture}{Conjecture}  
\theoremstyle{remark}
\numberwithin{equation}{section}
\title[Space-correlated SNLS]{Behavior of solutions to the 1D focusing\\ stochastic nonlinear Schr\"odinger equation \\
with spatially correlated noise}
\author[A. Millet]{Annie Millet}
\address{SAMM (EA 4543), Universit\'e Paris 1 Panth\'eon Sorbonne, Centre Pierre Mend\`es France, 
 90 Rue de Tolbiac, 75013 Paris Cedex, France (and LPSM, UMR 8001)}
\curraddr{}
\email{annie.millet@univ-paris1.fr}
\author[A.D. Rodriguez]{Alex D. Rodriguez}
\address{Department of Mathematics  \& Statistics\\Florida International University,  Miami, FL 33199, USA}
\curraddr{}
\email{arodr1128@fiu.edu}
\author[S. Roudenko]{Svetlana Roudenko}
\address{Department of Mathematics \& Statistics\\Florida International University,  Miami, FL 33199, USA}
\curraddr{}
\email{sroudenko@fiu.edu}
\author[K. Yang]{Kai Yang}
\address{Department of Mathematics  \& Statistics\\Florida International University,  Miami, FL 33199, USA}
\curraddr{}
\email{yangk@fiu.edu}
\subjclass[2010]{35R60, 35Q55, 60H15, 60H35, 65C30, 65M06} 
\keywords{stochastic NLS, spatially correlated noise, multiplicative noise, blow-up probability, blow-up dynamics, mass-conservative numerical schemes}
\begin{document}

\begin{abstract}
We study the focusing stochastic nonlinear Schr\"odinger equation in one spatial dimension with multiplicative noise, driven by a Wiener process white in time and colored in space, in the $L^2$-critical and supercritical cases.
The mass ($L^2$-norm) is conserved due to the multiplicative noise defined via the Stratonovich integral, the energy (Hamiltonian) is not preserved. 
We first investigate how the energy is affected by various spatially correlated random perturbations. We then study the influence of the noise on the global dynamics 
measuring the probability of blow-up versus scattering behavior depending on various parameters of correlation kernels. Finally, we study the effect of the 
spatially  correlated noise on the blow-up behavior, and conclude that such random perturbations 
do not influence the blow-up dynamics, except for shifting of the blow-up center location. This is  similar to what we observed in \cite{MRY2020} for a 
space-time white driving noise. 
\end{abstract}

\maketitle

\tableofcontents

\section{Introduction} \label{Introduction}
We consider the 1D stochastic focusing nonlinear Schr\"odinger (SNLS) equation  subject to a multiplicative random perturbation.
The stochastic perturbation is  driven by   
a Wiener process, which is white in time and colored in space and 
indexed by a  parameter ranging from the deterministic 
case to space-time white noise. Our aim is to investigate how a given type of space-colored noise influences the global behavior of solutions. 

We consider two types of driving noises. The first type is  a  real $L^2$-valued   ${\mathcal Q}$-Brownian motion, 
where the trace-class covariance operator ${\mathcal Q}$ has a prescribed set of eigenfunctions. 
The decay of the corresponding eigenvalues will be either of Gaussian type (Example 1) or polynomial (Example 2). 
The second type of noise is spatially homogenous, that is, defined through a convolution with a kernel creating long range interactions. 
In this setting the correlation kernel is either a renormalized 
Riesz kernel, which is singular at the origin (Example 3), or a more regular kernel with exponential decay (Example 4). 
 
More precisely, we study the 1D focusing stochastic NLS  equation  
\begin{align}\label{E:NLS}
\begin{cases} 
iu_t+u_{xx}+|u|^{2\sigma}u= \epsilon \, u \circ dW,  
\quad (x,t)\in {\mathbb R} \times  [0,\infty),\\
u(x,0)=u_0,
\end{cases}
\end{align}
where the initial condition $u_0\in H^1(\mathbb R)$ is deterministic and the stochastic perturbation is  driven by a Wiener  process $W$ as mentioned above. 
The notation $ u(x,t) \circ W(dt,dx)$ denotes the Stratonovich integral, which  can be related to the It\^o integral 
(using the Stratonovich-It\^o correction term).  
For more details we refer the reader to \cite[p.99-100]{dBD2003} or \cite[Section 2]{MRY2020}. The reason for the Stratonovich integral is the $L^2$ norm 
conservation, which is important in applications. We mention that focusing stochastic NLS with multiplicative noise appears in various physical models, for example, 
see \cite{RGBC1995}, \cite{RGBC1994}, also \cite{dBD2005} and references therein.  
  
In the deterministic case of \eqref{E:NLS}, $\epsilon=0$, 
the local wellposedness in $H^1$ is due to Ginibre and Velo \cite{GV1979}, \cite{GV1985}, see also \cite{K1987}, \cite{T1987}, \cite{CW1988}, 
and the book \cite{Ca2003} for further details. During their lifespans, solutions to the deterministic version of \eqref{E:NLS} conserve mass $M(u)$ and energy (or Hamiltonian) $H(u)$ (also momentum, though it is not considered in this work), which are defined as 
\begin{align}\label{E:M}
M(u(t))& \defeq \|u(t)\|_{L^2}^2 = M(u_0), \\ 
H(u(t))& \defeq \frac{1}{2} \| \nabla u(t)\|_{L^2}^2 
- \frac{1}{2\sigma +2} \|u(t)\|_{L^{2\sigma +2}}^{2\sigma +2} = H(u_0).
\label{E:H}
\end{align} 
The deterministic equation has scaling invariance: if $u(t,x)$ is a solution to \eqref{E:NLS} with $\epsilon=0$, then so is $u_\lambda(t,x) = \lambda^{1/\sigma} \, u(\lambda^2 t, \lambda x)$. Under this scaling, the Sobolev $H^{s}$ norm is invariant with 
\begin{equation}\label{E:s}
s= \frac12-\frac1{\sigma}.
\end{equation}
Thus, the 1D quintic ($\sigma=2$) NLS equation is called $L^2$-critical ($s=0$) and
the NLS equation with $\sigma >2$ is referred to as the $L^2$-supercritical ($s>0$). 
In this paper we study nonlinearities with $\sigma \geq 2$ (or $s \geq 0$). 
In this case 
it is known that $H^1$ solutions may blow up in finite time, by a standard convexity argument on a finite variance $V(t) \defeq \int |x|^2 |u(t,x)|^2 \, dx$ 
(called the virial argument). Otherwise, solutions exhibit scattering (i.e., approach a linear evolution as $t \to \pm\infty$) or non-scattering (soliton) behavior, 
global in time. For that  we recall the notion of standing waves $u(t,x) = e^{it} Q(x)$. 
Here, $Q$ is the smooth, positive, decaying at infinity solution to the following equation 
\begin{equation}\label{E:Q}
-Q+Q^{\prime\prime}+Q^{2\sigma+1} = 0.
\end{equation}
This solution is unique and called the ground state; it is explicit in 1D: $Q(x) = (1+\sigma)^{\frac1{2\sigma}} \, \sech^{\frac1{\sigma}} (\sigma x)$. 

In the $L^2$-critical case ($\sigma=2$) solutions exist globally in time 
if $M(u_0) < M(Q)$ by a result of Weinstein \cite{We1983} (these solutions also scatter in $L^2$, see \cite{D2015}). If $M(u_0) \geq M(Q)$, solutions may
 blow up in finite time. The minimal mass blow-up solutions, $M(u_0)=M(Q)$, were characterized by Merle \cite{M1993}. The known stable blow-up dynamics is 
 available for solutions with the initial mass larger than that of the ground state $Q$, and has a rich history; 
  see \cite{SS1999}, \cite{YRZ2018}, \cite{YRZ2020},   \cite{F2015} (and references therein). 

In the $L^2$-supercritical case ($s>0$) the known thresholds for globally existing vs. blow-up in finite time solutions depend on the scale-invariant quantities such as 
$M(u)^{1-s} H(u)^{s}$ and $\|u\|_{L^2}^{1-s} \|\nabla u(t)\|_{L^2}^s$ and their relative size to the similar quantities for the ground state $Q$. We  summarize it 
in the following statement 
(here, $X \defeq \{ H^1(\R) ~ \mbox{if} ~~ 0<s<1;~ L^2(\R) ~\mbox{if}~~ s=0 \}$;  also note that $s<\frac12$ in 1D).

\begin{theorem}[\cite{HR2007}, \cite{DHR2008}, \cite{HR2010b},\cite{Gu2014}, \cite{FXC2011}, \cite{D2015}]\label{T:1} 
Let $u_0\in X$ and $u(t)$ be the corresponding solution to the 1D deterministic NLS equation \eqref{E:NLS} ($\epsilon=0$)
 with the maximal existence interval $(T_*, T^*)$. Suppose that 
\begin{equation}\label{E:ME}
M(u_0)^{1-s} H(u_0)^s  < M(Q)^{1-s} H(Q)^s.
\end{equation} 


{\bf 1.} 
If $\|u_0\|_{L^2}^{1-s} \|\nabla u_0\|_{L^2}^s < \|Q\|_{L^2}^{1-s} \|\nabla Q\|_{L^2}^s$, then $u(t)$ exists for all $t\in \mathbb R$ with $\|u(t)\|_{L^2}^{1-s} \|\nabla u(t)\|_{L^2}^s < \|Q\|_{L^2}^{1-s} \|\nabla Q\|_{L^2}^s$ and $u$ scatters in $X$: there exist $u_{\pm}\in X$ such that $\lim\limits_{t\rightarrow\pm\infty}\|u(t)-e^{it\Delta}u_{\pm}\|_{X}=0$.

{\bf 2.} 
If $\|u_0\|_{L^2}^{1-s} \|\nabla u_0\|_{L^2}^s > \|Q\|_{L^2}^{1-s} \|\nabla Q\|_{L^2}^s$, then $\|u(t)\|_{L^2}^{1-s} \|\nabla u(t)\|_{L^2}^s > \|Q\|_{L^2}^{1-s} \|\nabla Q\|_{L^2}^s$ for $t\in(T_*, T^*)$. Moreover, if $|x|u_0\in L^2(\R)$ (finite variance), 
then the solution $u(t)$ blows up in finite time; if $u_0$ is of infinite variance, 
then either the solution blows up in finite time or there exits a sequence of times $t_n\rightarrow +\infty$ (or $t_n\rightarrow -\infty$) 
such that $\|\nabla u(t_n)\|_{L^2(\R)}\rightarrow \infty$.
\end{theorem}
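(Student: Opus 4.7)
The plan is to first set up the trapping/coercivity framework based on the sharp 1D Gagliardo--Nirenberg inequality
\[
\|u\|_{L^{2\sigma+2}}^{2\sigma+2} \le C_{GN}\, \|u\|_{L^2}^{\sigma+2}\,\|u_x\|_{L^2}^{\sigma},
\]
whose optimal constant $C_{GN}$ is attained by $Q$; the Pohozaev identities associated with \eqref{E:Q} express $C_{GN}$ in terms of $\|Q\|_{L^2}$ and $\|\nabla Q\|_{L^2}$. Plugging this into $H(u)$ yields a real function $F$ of the scale-invariant quantity $y(t) \defeq \|u(t)\|_{L^2}^{1-s}\|\nabla u(t)\|_{L^2}^{s}$ such that $F(y(t)) \le M(u_0)^{1-s} H(u_0)^{s}$, with $F$ unimodal and $\max F = M(Q)^{1-s}H(Q)^s$ attained precisely at $y = \|Q\|_{L^2}^{1-s}\|\nabla Q\|_{L^2}^{s}$. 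Under hypothesis \eqref{E:ME}, the level set $\{F \le M(u_0)^{1-s}H(u_0)^s\}$ decomposes into two disjoint intervals; since mass and energy are conserved ($\epsilon=0$) and $y$ is continuous in $t$, $y(t)$ is trapped in the interval containing $y(0)$ throughout the lifespan.

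For Part 1, the trapping gives a uniform $H^1$ bound on $u(t)$, hence $(T_*,T^*)=\R$. Upgrading global existence to scattering in $X$ is the technical core: for $0<s<\tfrac12$ one follows the concentration--compactness / rigidity (Kenig--Merle) roadmap developed in \cite{HR2007,DHR2008,HR2010b} and adapted to 1D in \cite{Gu2014,FXC2011}, constructing a minimal non-scattering critical element, proving its almost-periodicity modulo the symmetry group, and ruling it out via a localized virial rigidity argument that uses the strict sub-threshold gap. For the $L^2$-critical case $s=0$, one invokes Dodson's scattering theorem \cite{D2015} for mass below $M(Q)$.

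For Part 2, the same trapping forces $y(t) > \|Q\|_{L^2}^{1-s}\|\nabla Q\|_{L^2}^{s}$ for $t \in (T_*,T^*)$. When $|x|u_0 \in L^2$, invoke the virial identity
\[
V''(t) = 8\|\nabla u(t)\|_{L^2}^{2} - \frac{4\sigma}{\sigma+1}\|u(t)\|_{L^{2\sigma+2}}^{2\sigma+2} = 16\, H(u_0) - \frac{4(\sigma-2)}{\sigma+1}\|u(t)\|_{L^{2\sigma+2}}^{2\sigma+2},
\]
and combine with the sharp Gagliardo--Nirenberg bound and the trapping to obtain, after elementary algebra, $V''(t) \le -c$ for some constant $c=c(u_0,Q)>0$ determined by the gap in \eqref{E:ME}; concavity of $V$ then forces $V(t_0)=0$ at some finite $t_0$, whence blow-up by the uncertainty principle. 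For infinite-variance data, replace $V$ by a smoothly truncated virial $V_R(t) = \int \chi_R(x)\,|u|^2\,dx$ with radial cutoff $\chi_R$, estimate the error terms using the $L^2$ mass outside the ball, and run the dichotomy argument of \cite[etc.]{HR2010b} to conclude either finite-time blow-up or the existence of a sequence $t_n\to\pm\infty$ with $\|\nabla u(t_n)\|_{L^2}\to\infty$.

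The main obstacle is the scattering part of Part 1: the trapping argument and the virial blow-up are short and essentially self-contained, but the Kenig--Merle compactness/rigidity scheme requires profile decompositions adapted to $\dot H^s$, a perturbation theory for the nonlinear problem, and a delicate rigidity step, all of which are where the cited works do the substantive work.
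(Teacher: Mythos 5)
The paper does not prove Theorem \ref{T:1}; it is quoted as a known result with the proof residing entirely in the cited references \cite{HR2007}, \cite{DHR2008}, \cite{HR2010b}, \cite{Gu2014}, \cite{FXC2011}, \cite{D2015}. Your outline correctly reproduces the standard strategy of those works --- sharp Gagliardo--Nirenberg plus conservation laws to trap $\|u(t)\|_{L^2}^{1-s}\|\nabla u(t)\|_{L^2}^{s}$ on one side of the threshold, the Kenig--Merle concentration--compactness/rigidity scheme (Dodson's theorem when $s=0$) for scattering, and the virial or truncated-virial convexity argument for blow-up --- so there is nothing to correct, only the acknowledged fact that the scattering step is a substantial black box carried by the citations.
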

For the extensions of Theorem \ref{T:1} with \eqref{E:ME} replaced by $M(u_0)^{1-s} H(u_0)^s=M(Q)^{1-s} H(Q)^s $ see \cite{DR2010}, and 
by $M(u_0)^{1-s} H(u_0)^s > M(Q)^{1-s} H(Q)^s $ refer to \cite{HPR2010} and \cite{DR2015}. 

The focusing NLS equation subject to a stochastic perturbation has been studied in \cite{dBD2003} in the $L^2$-subcritical case, showing a global
 well-posedness for any $u_0 \in H^1$. Blow-up for $0 \leq s <1$ has been studied in \cite{dBD2005}  for a multiplicative noise. 
 The results in \cite{dBD2005} state that for $s \geq 0$ initial data with finite variance ($V(0)<\infty$) and sufficiently negative energy blow up before
  some finite time $t>0$ with positive probability \cite[Thm 4.1]{dBD2002c}. 
In the $L^2$-supercritical  case, 
the stochastic perturbation can create blow up with strictly positive probability from {\it any} initial condition before any strictly positive time,  
see \cite[Thm 5.1]{dBD2005}. 
More precisely, if the noise is nondegenerate, i.e., ${\ker}\,  \phi^*=\{0\}$, and  regular enough, then for any non-trivial initial data $u_0 \in \Sigma^2$ (here, 
$\Sigma^2 = \{f \in H^2:  |x|^2 u \in L^2\}$) and any time $t>0$, blow-up occurs with strictly positive probability before time $t$. 
Indeed, the non-degeneracy of the noise implies that before any prescribed positive time (say $t/2$), the solution $u(s,\cdot)$ will be (with strictly positive probability) 
in any given neighborhood of a function $v$ such that the deterministic NLS equation starting from $v$ will blow up before time $t/2$.

One major difference (and difficulty) compared to the deterministic setting is that energy is not necessarily conserved in the stochastic perturbations.
 In the SNLS equation \eqref{E:NLS} with multiplicative noise (defined via the Stratonovich integral) the mass is conserved a.s., see \cite{dBD2003}, 
 which allows to prove global existence of solutions
in the $L^2$-critical setting with $M(u_0) < M(Q)$; see \cite{MR2020}. 
To further understand global behavior in the $L^2$-supercritical setting  one needs to control energy (as can be seen from Theorem \ref{T:1}). 
Due to the scaling invariance and mass conservation, in \cite{MR2020} an analog of Theorem \ref{T:1} is obtained to describe behavior of solutions on some 
(random) time interval in the stochastic setting in the $L^2$-critical and supercritical cases.
In particular, 
if $M(u_0)^{1-s} H(u_0)^s  < \gamma \, M(Q)^{1-s} H(Q)^s$
for some $\gamma \in (0,1)$, and $\| u_0\|^{1-s}_{L^2} \|\nabla u_0\|^s_{L^2} < \| \nabla Q \|^{1-s}_{L^2} \|\nabla Q\|^{s}_{L^2}$, then there is no blow-up until some 
random time $\tau^*(u_0)$ such that $P(\tau^*(u_0) >T)>0$ for $T<T^*$, where $T^* = T^*(\sigma, \gamma, M(u_0), M(Q), m_\phi)$, 
where the constant $m_\phi$ defined in \eqref{def_mphi} is related to the roughness and strength of the driving noise $W$. 

While it is possible to obtain some upper bounds on the energy on a (random) time interval, 
the exact behavior of energy is not clear.  This is one of the motivations for this work, namely, to investigate the time evolution of energy, and then, 
the global behavior of solutions. Another motivation is to understand how the considered noise (colored in space and white in time) affects the 
probability  for global or finite existence, and then finally, how noise affects the blow-up dynamics, compared with the deterministic case.
We consider the discretization of energy (as well as mass), then obtain theoretical bounds on that discrete analog, including  the dependence on various 
discretization and perturbation parameters. We track the evolution of energy numerically, and then behavior of solutions, followed by studying how the noise prevents
 blow-up, or vice versa, leads towards the blow-up. After that we investigate the blow-up dynamics of solutions in both $L^2$-critical and supercritical settings and 
 obtain the rates,  profiles and other features such as locations of blow-up. Before we discuss our findings, we review stable blow-up in the deterministic setting. 

A stable blow-up in deterministic setting exhibits a self-similar structure with  specific rates and profiles. Due to the scaling invariance, the following rescaling 
of the (deterministic) equation is introduced via the new space and time coordinates  $(\tau, \xi)$ and a scaling function $L(t)$ 
(for more details see \cite{LePSS1987}, \cite{SS1999}, \cite{YRZ2019})
\begin{align}\label{E:rescale}
u(t,r)=\frac{1}{L(t)^{\frac{1}{\sigma}}}\,v(\tau, \xi), \quad \mbox{where} \quad \xi=\frac{r}{L(t)},~~r=|x|, \quad \tau=\int_0^t\frac{ds}{L(s)^2}.
\end{align}
The equation \eqref{E:NLS} (with $\epsilon=0$) then becomes
\begin{align}\label{E:v}
iv_{\tau}+ia(\tau)\left(\xi v_{\xi}+\frac{v}{\sigma}\right)+\Delta v + |v|^{2\sigma}v=0, 
\end{align}
with
\begin{align}\label{E:a}
a(\tau)=-L\frac{dL}{dt} \equiv -\frac{d \ln L}{d\tau}.
\end{align}
The limiting behavior of the parameter $a(\tau)$ in \eqref{E:v} as $\tau \to \infty$
makes a significant difference in blow-up behavior between the $L^2$-critical and $L^2$-supercritical cases. 
As $a(\tau)$ is related to $L(t)$ via \eqref{E:a}, the behavior of the rate,  
$L(t)$, is typically studied to understand the blow-up behavior (we do so in Section \ref{Numerical results}). Separating variables
 $v(\tau, \xi)=e^{i\tau}Q(\xi)$ in \eqref{E:v} and assuming that $a(\tau)$ converges to a constant $a$,
the following system is used to obtain blow-up profiles
\begin{align}\label{E:profile}
\begin{cases}
Q_{\xi\xi} -Q + ia\left(\dfrac{Q}{\sigma} + \xi Q_{\xi} \right) + |Q|^{2\sigma}Q=0, \quad \xi \in [0,\infty),\\
Q_{\xi}(0)=0,\quad  Q(0)\in \R,  
\quad Q(\infty)=0.
\end{cases}
\end{align}
Besides the conditions above, it is also required to have $|Q(\xi)|$ decrease monotonically with $\xi$, without any oscillations as $\xi \to \infty$ 
(see more on that in \cite{YRZ2019}, \cite{SS1999}, \cite{BCR1999}). In the $L^2$-critical case the above equation is simplified (due to $a$ being zero)
to the ground state equation \eqref{E:Q}. Nevertheless, the equation \eqref{E:profile} with nonzero $a$ (but asymptotically approaching zero) is investigated 
(even in the $L^2$-critical context), since the correction in the blow-up rate $L(t)$ comes exactly from that. It should be emphasized that the decay of $a(\tau)$ 
to zero in the critical case is extremely slow, which makes it very difficult to pin down the exact blow-up rate, or more precisely, the correction term in the blow-up rate,
 and it was quite some time until rigorous analytical proofs appeared    (in 1D \cite{Pe2000a}, followed by a systematic work in \cite{MR2005}-\cite{FMR2006} 
 and references therein; see  \cite{SS1999} or \cite[Introduction]{YRZ2019}). In the $L^2$-supercritical case, the convergence of $a(\tau)$ to a non-zero 
 constant is rather fast, 
and the rescaled solution converges to the blow-up profile fast as well. 
The more difficult question in this case is the profile itself, 
   since it is no longer the ground state from \eqref{E:Q}, but exactly an admissible solution (without fast oscillating decay and with an asymptotic decay of 
   $|\xi|^{-\frac1\sigma}$ as $|\xi| \to \infty$) of \eqref{E:profile}. 

Among all admissible solutions to \eqref{E:profile} there is no uniqueness as it was shown in \cite{BCR1999}, \cite{KL1995}, \cite{YRZ2019}. 
These solutions generate branches of so-called multi-bump profiles, that are labeled $Q_{J,K}$, indicating that the $J$th branch converges to the $J$th excited state,
 and $K$ is the enumeration of solutions in a branch. The solution $Q_{1,0}$, the first solution in the branch $Q_{1,K}$ (this is the branch, which converges to the 
 $L^2$-critical ground state solution $Q$ in \eqref{E:Q} as the critical index $s \to 0$), is shown (numerically) to be the profile of a stable supercritical blow-up.
  The second and third authors have been able to obtain the profile $Q_{1,0}$ in various NLS cases (see \cite{YRZ2019}, also an adaptation for a nonlocal 
  Hartree-type NLS \cite{YRZ2020}), and thus, we are able to use that in this work and compare it with the stochastic case. 


In the focusing SNLS case, in \cite{DM2002b} and \cite{DM2002a} 
numerical simulations were done when the driving noise is rough, namely, it is an approximation of space-time white noise. The effect of the multiplicative
 (and also  additive) noise is described for the propagation of solitary waves. In particular, it was noted that the blow-up mechanism  transfers energy from the
  larger scales to smaller scales, thus, allowing the mesh size to affect the formation of the blow-up in the case of multiplicative noise (the coarse mesh allows 
  formation of blow-up and the finer mesh prevents it or delays it).  The authors investigated the probability of the blow-up time and they observed that in the multiplicative case the blow-up is delayed on average. 
Other parameters' dependence (such as the dependence on the strength $\epsilon$ of the noise) is also discussed.

In this paper we use three numerical schemes 
from \cite{MRY2020}, where we studied the SNLS with perturbation driven by the space-time white noise. We apply these schemes to track energy of the stochastic 
Schr\"odinger flow in each of the four examples of noises driving the
multiplicative perturbation. After that we investigate the influence of the noise on the global behavior, in particular, probability of blow-up depending 
on the strength of the noise and spatial correlation.  In particular, we confirm  
that the noise generally delays or prevents blow-up. The more regular the noise is, the less delay or preventing effect it will have on the blow-up solutions.
Finally, we study the influence of the spatially correlated noise on the blow-up dynamics. In particular, we investigate the following conjectures. 

\begin{conjecture}[$L^2$-critical case]\label{C:1}
Let $u_0 \in H^1(\mathbb R)$ and $u(t)$, $t>0$, be the solution to the SNLS equation  \eqref{E:NLS} with $\sigma=2$ and the multiplicative noise $\epsilon \, u \circ dW$ driven by a spatially-correlated Brownian motion $W$. 

Sufficiently localized initial data with $\|u_0\|_{L^2} > \|Q\|_{L^2}$ blows up
in finite positive (random) time with positive probability.

If a solution blows up at a random positive time $T(\omega)>0$ for a given $\omega \in \Omega$, then the blow-up is characterized by a
 self-similar profile (same ground state profile $Q$ from \eqref{E:Q} as in the deterministic NLS), and for $t$ close to $T(\omega)$ 
\begin{equation}\label{E:loglog}
\|u_x(t,\cdot) \|_{L^2_x} \sim \frac1{L(t)}, \quad \mbox{ where}\quad L(t) \sim \left( \frac{2\pi(T-t)}{\ln|\ln(T-t)|} \right)^{\frac{1}{2}} \quad 
\mbox{ as} \quad {t \to T(\omega)}, 
\end{equation}
known as the {\it log-log} rate due to the double logarithmic correction in $L(t)$. 

Thus, the solution blows up in a self-similar regime with profile converging to a rescaled ground state  profile $Q$, and the core part of the solution 
$u_c(x,t)$ behaves as 
$$ 
u_c(t,x) \sim \dfrac{1}{L(t)^{\frac{1}{2}}} Q\left(\frac{x-x(t)}{L(t)}\right) e^{i\gamma(t)} 
$$
with $L(t)$ converging as in \eqref{E:loglog}, $\gamma(t) \to \gamma_0$, and  $x(t) \to x_c$ (the blow-up center). 

Furthermore, conditionally on the existence of blow-up in finite time $T(\omega)> 0$,  $x_c$ is a Gaussian random variable. 
\end{conjecture}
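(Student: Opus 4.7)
The plan is to adapt the Merle--Raphael modulation analysis for the deterministic $L^2$-critical NLS (see \cite{MR2005}--\cite{FMR2006}) to the stochastic setting of \eqref{E:NLS}. First, I would decompose a solution close to the blow-up manifold as
\begin{equation*}
u(t,x) = \frac{1}{L(t)^{1/2}}\bigl( Q + \varepsilon\bigr)\!\left(\tau(t), \frac{x-x(t)}{L(t)}\right) e^{i\gamma(t)},
\end{equation*}
with modulation parameters $(L(t), x(t), \gamma(t), b(t))$ chosen to impose orthogonality conditions on the remainder $\varepsilon$ that kill the directions of instability of the linearized operator $\mathcal{L}$ around $Q$. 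In the deterministic case, energy conservation is one of the crucial ingredients; since in \eqref{E:NLS} the Hamiltonian is only controlled up to a stochastic correction involving $\int_0^t \langle u(s),\, u(s)\circ dW(s)\rangle$-type terms, the first step is to quantify this correction. Using the It\^o--Stratonovich correction together with the trace-class property of $\mathcal{Q}$, I would show that the energy drift over a time interval of length $O(L^2)$ is of order $\epsilon^2 m_\phi$ times the typical energy scale, which is negligible compared to the log-log correction.

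With this in hand, the plan is to derive a coupled system of stochastic differential equations for the modulation parameters: a deterministic leading-order system identical to the one in \cite{MR2005} plus noise contributions arising from $\epsilon\, u\circ dW$ projected along $Q$, $\xi\cdot\nabla Q$, $iQ$, and $|\xi|^2 Q$. The key monotonicity step would be a stochastic analogue of the localized virial/Lyapunov functional $\mathcal{J}(\varepsilon)$ used by Merle--Raphael: I would apply It\^o's formula to $\mathcal{J}(\varepsilon(\tau))$ in rescaled time, control the It\^o correction by the smoothness of the noise (captured by $m_\phi$), and show that the resulting supermartingale inequality still forces the quantization relation
\begin{equation*}
\frac{L_\tau}{L} + b \longrightarrow 0 \quad \text{and} \quad b_\tau + 2b^2 \longrightarrow 0,
\end{equation*}
up to noise fluctuations whose integral in rescaled time is $O(\tau^{1/2})$, hence negligible compared to the $\tau$-linear deterministic drift. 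Integrating this yields $b(\tau)\sim 1/(2\tau)$, and translating back via \eqref{E:a} delivers the log-log rate \eqref{E:loglog} on the event of blow-up. The existence of blow-up with positive probability for $\|u_0\|_{L^2}>\|Q\|_{L^2}$ follows from the non-degeneracy argument of \cite[Thm 5.1]{dBD2005}: approximate a deterministic blow-up solution of \cite{M1993} on a short interval by the stochastic flow and then propagate the regime.

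For the final claim on $x_c$, the modulation equation for $x(t)$ takes the schematic form $x_t = $ (small deterministic drift) $+ \epsilon\, L^{-1}\int \Phi(x-x(t))\, Q(\cdot/L)\, dW(\cdot,t)$, after projecting the stochastic nonlinearity against the appropriate direction of $\mathcal{L}$. Conditionally on blow-up at time $T(\omega)$, the increments $dx(t)$ are (up to lower-order drift) linear functionals of the Gaussian Wiener process, so $x(t)$ is a Gaussian process; since $L(t)\to 0$, the martingale $\int_0^t \epsilon L^{-1}\Phi\, dW$ has quadratic variation that converges almost surely on the blow-up event to a finite limit (thanks to the spatial localization of $Q$ and the spatial regularity of $W$), so the limit $x_c = \lim_{t\to T} x(t)$ exists and is Gaussian by the Dambis--Dubins--Schwarz theorem, with variance computable from the covariance kernel of $W$.

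The main obstacle, in my view, is the control of the remainder $\varepsilon$ in the stochastic setting: the deterministic proof uses very sharp coercivity of $\mathcal{L}$ and a bootstrap on weighted $H^1$-norms of $\varepsilon$, and the It\^o correction in $\|\varepsilon\|_{H^1}^2$ contains the quantity $\epsilon^2\,\mathrm{Tr}\bigl(\phi^*(-\Delta)\phi\bigr)$, which is finite only under suitable regularity of the correlation kernel (Examples 1, 2, 4) but borderline for the renormalized Riesz kernel (Example 3). Handling Example 3 rigorously would likely require a cut-off and a renormalization procedure, and it is conceivable that the log-log law breaks down there, which is precisely one reason the statement is phrased as a conjecture and investigated numerically in the sequel.
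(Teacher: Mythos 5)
The statement you are proving is labelled a \emph{conjecture} in the paper, and the paper does not prove it: it only supplies numerical evidence (convergence of the rescaled profiles to $Q$, the slope $0.50$ in the $\log L$ versus $\log(T-t)$ plots, the fit of $a(\tau)$ against $1/\ln\tau$, and empirical histograms of $x_c$ in Appendix A). So there is no proof in the paper to match yours against, and your text should be judged as a research programme. As such it is a reasonable outline of the natural attack (Merle--Rapha\"el modulation plus stochastic corrections), but two of its central steps are asserted rather than established, and one of them is wrong as written. First, the modulation system you write down, $b_\tau+2b^2\to 0$, is the self-similar (supercritical) closure; in the $L^2$-critical log-log regime the drift of $b$ is governed by an exponentially small flux, schematically $b_\tau\approx -\Gamma_b$ with $\Gamma_b\sim e^{-\pi/b}$, and it is precisely this exponential smallness that produces the double logarithm. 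A noise contribution to $b_\tau$ of any polynomial order in $\tau$ --- which is what your ``$O(\tau^{1/2})$ fluctuations'' estimate would give --- is enormous compared with $e^{-\pi/b}$ and cannot be dismissed as negligible; showing that the rescaled noise is in fact exponentially small in $1/b$ (or that its martingale structure averages out at that precision) is the actual content of the conjecture, and nothing in your sketch addresses it. The energy-drift bound over a window of length $O(L^2)$ faces the same issue: the log-log correction is sensitive at a level far below ``$\epsilon^2 m_\phi$ times the typical energy scale.''

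Second, the Gaussianity of $x_c$ does not follow from your argument. The modulation SDE for $x(t)$ has a diffusion coefficient depending on the solution $u$ (through $L(t)$ and the projection of the multiplicative noise against the translation direction), so $x(t)$ is an It\^o process with a random, solution-dependent quadratic variation, not a Gaussian process. Dambis--Dubins--Schwarz represents it as $B(\langle x\rangle_t)$ for some Brownian motion $B$, but $B$ evaluated at the random time $\langle x\rangle_{T(\omega)}$ is Gaussian only if that quadratic variation is deterministic or independent of $B$, and conditioning on the blow-up event biases the law further. The paper itself only verifies approximate normality empirically (Tables \ref{Tab:1}--\ref{Tab:2}). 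Your closing paragraph correctly identifies the coercivity/bootstrap for $\varepsilon$ and the roughness of the Riesz kernel as obstacles, but the two gaps above are more fundamental and would need to be resolved before this could be called a proof of Conjecture \ref{C:1}.
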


\begin{conjecture}[$L^2$-supercritical case]\label{C:2}
Let $u_0 \in H^1(\mathbb R)$ and $u(t)$, $t>0$, be the solution to the SNLS equation \eqref{E:NLS} with $\sigma > 2$ and the multiplicative noise
 $\epsilon \, u \,\circ\, dW$ driven by a  spatially-correlated Brownian motion $W$. 

Sufficiently localized initial data
blows up in finite positive (random) time with positive probability.

If a solution blows up at a random positive time $T(\omega)>0$ for a given $\omega \in \Omega$, then the blow-up core dynamics $u_c(x,t)$ for $t$ close to $T(\omega)$ is characterized as
\begin{equation}\label{E:blowup-super}
u_c(t,x) \sim \dfrac{1}{L(t)^{\frac1{\sigma}}} Q\left(\frac{x-x(t)}{L(t)}\right) \exp \left({i \theta(t) + \frac{i}{2a(t)}\log \frac{T}{T-t}} \right),
\end{equation}
where the blow-up profile $Q$ is the $Q_{1,0}$ solution of the equation \eqref{E:profile}, $a(t) \to a$, the specific constant corresponding to the $Q_{1,0}$ profile, 
$\theta(t) \to \theta_0$, $x(t) \to x_c$ (the blow-up center), and $L(t)=(2a(T-t))^{\frac12}$.
Consequently, a direct computation yields that for $t$ close to $T(\omega)$
\begin{equation}\label{E:rate-super}
\|u_x( t, \cdot)\|_{L_x^2} \sim \frac1{L(t)^{1-s}} = {\left(2a(T-t) \right)^{-\frac12(\frac12+\frac1{\sigma})}}.
\end{equation}
Furthermore, conditionally on the existence of blow-up in finite time $T(\omega)> 0$,  $x_c$ is a Gaussian random variable. 

Thus, the blow-up happens with a polynomial rate \eqref{E:rate-super} without correction, and with profile converging to the same blow-up profile as in the 
deterministic supercritical NLS case.
\end{conjecture}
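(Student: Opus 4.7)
The plan is to split the conjecture into three assertions and address each with a mixture of pathwise PDE analysis, modulation theory, and numerical evidence. First, to establish that sufficiently localized $u_0$ blows up with positive probability in the supercritical regime, I would follow the strategy of de Bouard--Debussche \cite[Thm 5.1]{dBD2005}: for each of the four correlation structures considered (Examples 1--4) the driving noise is non-degenerate on a dense subspace of $H^1$, so with strictly positive probability the noisy flow, before any prescribed small time $t_0$, brings $u(t_0,\cdot)$ into any chosen $H^1$-neighborhood of a deterministic $v$ that satisfies the blow-up hypotheses of Theorem \ref{T:1}, part 2. Continuous dependence on initial data on the deterministic blow-up interval, together with the Markov property of the SNLS flow, then gives $P(T(\omega)<T)>0$ for every $T>t_0$.

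Second, to characterize the profile and rate, I would introduce a pathwise analogue of the rescaling \eqref{E:rescale}, choosing $L(t,\omega) \sim \|u_x(t,\omega)\|_{L^2}^{-1/(1-s)}$ on the blow-up event. Under this change of variables the Stratonovich term $\epsilon\, u \circ dW$ becomes $\epsilon\, v \circ d\widetilde W$, and the natural comparison of its amplitude to the deterministic right-hand side of \eqref{E:v} scales as a positive power of $L(t)$ whenever the covariance kernel of $W$ is sufficiently regular (which holds for Examples 1, 2 and 4, and for the renormalized Riesz kernel in Example 3 once rescaled). Consequently the noise is \emph{subcritical} under the zoom-in and the limiting profile equation is still \eqref{E:profile}. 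The admissibility/monotonicity selection of \cite{YRZ2019} then picks $Q_{1,0}$ and, via the same attractor-in-$\tau$ argument as in the deterministic supercritical theory, yields $a(\tau)\to a$ almost surely on $\{T(\omega)<\infty\}$, hence $L(t)=(2a(T-t))^{1/2}$ and the rate \eqref{E:rate-super}. Numerically I would verify this by tracking $L(t)$, the rescaled solution, and the value of $a$ against the deterministic $Q_{1,0}$ benchmark over Monte Carlo realizations.

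Third, and most delicately, to obtain the Gaussian law of the blow-up center $x_c$ conditionally on $\{T(\omega)<\infty\}$, I would run a Merle--Rapha\"el style modulation analysis, decomposing $u(t,\cdot)$ into the four-parameter family of modulated translates/dilates of $Q_{1,0}$ plus a symplectically orthogonal remainder. Projecting the SNLS onto the translation direction, the equation for the modulated center should take the schematic form $dx(t) = (\text{lower order})\, dt + \epsilon\, \Sigma_\phi(u(t))\, dB_t$, where $\Sigma_\phi$ encodes the overlap of the localized profile with the spatial kernel of $W$; the It\^o integration up to the (random) blow-up time then produces, after time-change, a Gaussian contribution for $x_c$. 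Supporting this, I would compute empirical histograms of $x_c$ across many realizations for each kernel, fit to a normal distribution, and apply a Kolmogorov--Smirnov test together with the analysis of the variance as a function of $\epsilon$ and of the correlation length.

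The main obstacle is precisely this last step: the blow-up time $T(\omega)$ is not a stopping time in any natural filtration, so the conditional law on $\{T<\infty\}$ cannot be read off directly from the modulation SDEs. Rigorous extraction of the Gaussian law requires either an enlargement-of-filtration argument or a careful control of the conditional remainder on an exhausting sequence of $\tau^*_n$-stopped intervals, both of which go well beyond the scope of this work; hence the statement is formulated as a conjecture, with the Gaussianity supported by our simulations rather than by a theorem.
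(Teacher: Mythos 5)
This statement is a conjecture, and the paper does not prove it: its own support consists of (i) the citation of \cite[Thm 5.1]{dBD2005} for the positive probability of blow-up, and (ii) the numerical experiments of Sections \ref{Numerical results}--\ref{blow-up dynamics} and Appendix A, which track $L(t)$, the parameter $a(\tau)$, the rescaled profile against $Q_{1,0}$, and the empirical distribution of $x_c$. Your first step therefore coincides with the paper's: the controllability-plus-continuous-dependence argument you describe is exactly the mechanism the introduction attributes to \cite{dBD2005}. One caveat you should make explicit is that the hypotheses of that theorem (nondegeneracy $\ker \phi^*=\{0\}$, sufficient regularity of $\phi$, $u_0\in\Sigma^2$) must be checked for each kernel; as actually implemented, the operators $\phi^{(1)}_\beta$ and $\phi^{(2)}_\beta$ annihilate every $e_j$ with $j>N$, so the noise is degenerate and the theorem does not apply verbatim --- the paper is careful to say only that its observations are ``consistent with'' that result.

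Your second and third steps go beyond anything the paper attempts analytically, and this is where the proposal is a roadmap rather than a proof. The claim that the Stratonovich term becomes subcritical under the zoom-in \eqref{E:rescale} is a scaling heuristic: it requires a quantitative comparison of the rescaled noise with the deterministic terms of \eqref{E:v} uniformly up to the blow-up time, which is delicate precisely because the time change $\tau=\int_0^t L(s)^{-2}\,ds$ is random and anticipating; moreover, for the Riesz kernel of Example 3 the covariance is singular on the diagonal, so the asserted gain of ``a positive power of $L(t)$'' does not follow from regularity of the kernel and would need a separate argument. Likewise, the selection of $Q_{1,0}$ among the non-unique admissible solutions of \eqref{E:profile} is not known rigorously even in the deterministic supercritical case (it is a numerically observed stability statement, see \cite{YRZ2019}, \cite{BCR1999}), so the ``attractor-in-$\tau$ argument'' you invoke is not currently available to import. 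You correctly identify the conditioning on $\{T(\omega)<\infty\}$ as the obstruction to a rigorous Gaussianity statement for $x_c$. In short: your part 1 matches the paper's cited mechanism, your parts 2--3 constitute a sensible but unproven program, and the only verifiable content --- in both your proposal and the paper --- is the Monte Carlo evidence for the rate, the profile, and the normal fit of the blow-up center.
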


Previously, we confirmed the above conjectures in the case of a driving space-time white noise $W$ (for both additive and multiplicative perturbations) in 
\cite{MRY2020}. We are able to confirm the above conjectures in the setting of this paper - the four examples of spatially correlated Wiener processes, which are used to define the multiplicative random  perturbations. 

The paper is organized as follows. In Section \ref{S:P} we review the mass  conservation and energy bounds in the stochastic setting, then recall the three mass-conservative numerical schemes,  one of them being also energy-conservative in the deterministic setting. 
In Section \ref{Q-Wiener} we describe the first type of the driving noise $W$,  which is a  $\mathcal Q$-Brownian motion, via two examples. This is accompanied by the upper estimates for energy in both examples, and then numerical tracking of energy. In Section \ref{homogen} we study a spatially homogeneous noise $W$  via another two examples, observing first growth and then leveling off of the energy as in the case of  $\mathcal{Q}$-Brownian motions. After that we investigate the probability of blow-up
   in Section \ref{Numerical results} and how it is influenced by the strength of the noise and a  spatial correlation parameter. Our final investigations of profiles,
    rates and center location in the blow-up dynamics are in Section \ref{blow-up dynamics}. We give conclusions in Section \ref{conclusion} with an appendix 
    containing our computations of the normal distribution of the random variable representing the location shift of the blow-up center. 

{\bf Acknowledgments.} Part of this work was done when the first author visited Florida International University. She would like to thank FIU for the hospitality 
and the financial support. A.M.'s research has been conducted within the FP2M federation (CNRS FR 2036). 
S.R. was partially supported by the NSF grant DMS-1815873/1927258 as well as part of the K.Y.'s research and travel support to work on this project came from the above grant. A.D.R. was supported by REU program under DMS-1927258 (PI: Roudenko).

\section{Preliminaries}\label{S:P}
In this section we recall the time evolution of mass and energy when equation \eqref{E:NLS} is driven by a regular noise, then define the numerical schemes and the discretized versions of the mass and energy. 

\subsection{Time dependence of mass and energy}
Let the noise ${W}=\sum_{j\geq 0} \beta_j \phi e_j$  be  real-valued and regular in the space variable,
that is, colored in space by means of a Hilbert-Schmidt operator $\phi$ from $L^2_{\R}(\R) $ to $L^2_{\R}(\R)$, with the Hilbert-Schmidt norm denoted by $\|\phi\|_{L^{0,0}_{2,\R}}$.  
Since the process $W$ is real-valued and the noise is  multiplicative, as in the deterministic case, i.e., when $\epsilon =0$, 
the equation \eqref{E:NLS} conserves mass  almost surely (see \cite[Proposition 4.4]{dBD2003}), i.e.,
\begin{align}\label{D: mass}
M(u(t))=\int_{\R} |u(t,x)|^2 dx= 
M(u_0) \quad \mbox{\rm a.s.}
\end{align} 
This is a consequence  
of  rewriting \eqref{E:NLS} using the Stratonovich-It\^o correction term
\begin{equation}\label{Ito_form}
 i d_t u(t) - \big( \Delta u(t) + |u(t)|^{2\sigma} u(t) \big) dt 
=  \; u(t) dW(t) - \frac{i}{2} F_\phi u(t) dt, 
\end{equation}
where $F_\phi(x)=\sum_{k\geq 0} \big( \phi e_k(x) \big)^2$,  
and applying the  It\^o formula.

In the deterministic case, the energy (or Hamiltonian) $H(u)$  
of the solution, defined in  \eqref{E:H}, is conserved in time. This is no longer true in a stochastic setting. 

In order to study the time evolution of energy in the stochastic framework, we have to impose stronger assumptions on the operator $\phi$.
More precisely, we require that $\phi$ is Hilbert-Schmidt from $L^2_{\R}(\R)$  to   $H^1_{\R}(\R)$, and Radonifying from $L^2_{\R}(\R)$ to
$ W^{1,\kappa}_{\R}(\R)$ for some $\kappa >2$.
As proved in \cite[Proposition 4.5]{dBD2003}, the stochastic perturbation creates a time evolution  of  energy described by the It\^o formula for the It\^o 
formulation \eqref{Ito_form} of the stochastic NLS equation \eqref{E:NLS}
\begin{align*}
H(u(t)) = &\,  H(u_0) - \mbox{\rm Im } \epsilon \sum_{j\geq 0} \int_0^t  \int_{\R} \bar{u}(s,x)\nabla u(s,x) \cdot (\nabla \phi e_j)(x) dx d\beta_j(s)   \\
& +\frac{\epsilon^2}{2} \sum_{j\geq 0} \int_0^t \int_{\R} |u(s,x)|^2 \; |\nabla (\phi e_j)|^2 dx ds.  
\end{align*}

Taking expected values, we deduce that for any $\epsilon >0$ 
\begin{equation} \label{energy_mul}
{\mathbb E}(H\big( u(t)\big) = H(u_0) + \frac{\epsilon^2}{2} {\mathbb E}  \sum_{j\geq 0} \int_0^t \int_{\R} |u(s,x)|^2 \big| (\nabla \phi e_j)(x)\big|^2 dx ds
\leq H(u_0) + \frac{\epsilon^2}{2} m_\phi M(u_0)\, t,
\end{equation}
where 
\begin{equation}\label{def_mphi} 
m_\phi \defeq  \sup_{x\in \R} \sum_{j\geq 0} | \nabla(\phi e_j)(x)|^2<\infty,
\end{equation} 
since $\phi$ is Radonifying from $L^2_{\R}(\R)$ to $\dot{W}_{\R}^{1,\infty}(\R)$.

We next describe our discretizations and the numerical schemes that we use, which preserve the discrete mass; we use those to study the effect of various types 
of space-correlated driving noises on the global behavior of solutions, including the blow-up probability before a given time $T$ and the blow-up profiles. 
The time evolution of energy is a crucial first step in this study.

\subsection{Discretizations and numerical schemes}\label{S:scheme}

Let $[-L_c,L_c]$ to be a symmetric interval of computational domain, and let $\left\lbrace x_j \right\rbrace_{j=0}^N$ be  grid points  from $-L_c$ to $L_c$ (the points $x_j$ are not necessarily  equi-distributed); denote $\Delta x_j=x_{j+1}-x_{j}$. We also use the pseudo-points $x_{-1}$ satisfying $\Delta x_{-1}=\Delta x_0$, and $x_{N+1}$ 
satisfying $\Delta x_{N-1}=\Delta x_N$. Note that $x_0=-L_c$, $x_{N}=L_c$, and in the case of a constant space mesh $\Delta x$, and for $N$ even we have 
$x_{\frac{N}{2}}=0$ and $x_{\frac{N}{2}- k}=- x_{\frac{N}{2}+k}$ for $k=0, \cdots , \frac{N}{2}$.

We recall the second order discrete differential operators for a non-constant space mesh; it replaces $\partial_{xx}$ (see \cite{MRY2020} for more details). 
Given a function $f:[-L_c,L_c]\to {\mathbb C}$, set $f_j=f(x_j)$, and from the Taylor expansion of $f(x_{j-1})$ and $f(x_{j+1})$ around  $x_j$,
 one can define the second order difference operator, which is a second order approximation of $\partial_{xx}$, as
\begin{align}\label{D: D2}
\mathcal{D}_2 f_j \defeq \frac{2}{\Delta x_{j-1}(\Delta x_{j-1}
+\Delta x_{j})} f_{j-1}- \frac{2}{\Delta x_{j-1}\Delta x_{j}} f_j +\frac{2}{(\Delta x_{j-1}+\Delta x_{j})\Delta x_{j}}f_{j+1}.
\end{align}

Let $\Delta t_m=t_{m+1}-t_m$ be the time step size from $t=t_{m}$ to $t=t_{m+1}$, $m=0,1, \dots$, and 
$u_j^m$ denote the full discretization in space and time of $u$  at time $t_m$ and location $x_j$, that is, the approximation of $u(t_m,x_j)$. 
Set $V^m_j \defeq |u^m_j|^{2\sigma}$, 
and define the mid-point in time as $u^{m+\frac{1}{2}}_j=\frac{1}{2}(u^m_j+u^{m+1}_j)$. 
   
In Sections \ref{Q-Wiener} and \ref{homogen}, simulations are done on a uniform space mesh $\Delta x$ (that is, $\Delta x_j =\Delta x$ for all $j$). 
Later in the paper, where we investigate global behavior and track the blow-up dynamics in Sections \ref{Numerical results} and \ref{blow-up dynamics}, our mesh-refinement algorithm leads to a non-uniform mesh. Therefore, we give our schemes in terms of non-uniform meshes. 

We use the following discretization schemes from \cite{MRY2020}: 
the mass-energy conservative (MEC) scheme (which is a generalization of the scheme in \cite{DM2002a} to the non-uniform mesh)  
\begin{align}		
\label{mass-energy}
i \, \dfrac{u_j^{m+1}-u_j^m}{\Delta t_m}+\mathcal{D}_2 u_j^{m+\frac{1}{2}} + \frac{1}{\sigma +1} \frac{|u^{m+1}_j|^{2(\sigma +1)}- |u^m_j|^{2(\sigma +1)}}
{|u^{m+1}_j|^2-|u^m_j|^2} \; u^{m+\frac{1}{2}}_j  =\epsilon f_j^{m+\frac{1}{2}},
\end{align}
the Crank-Nicholson (CN) scheme 
\begin{align}\label{NS: crank-nicholson}
i \, \dfrac{u_j^{m+1}-u_j^m}{\Delta t_m}+\mathcal{D}_2 u_j^{m+\frac{1}{2}}+V_j^{m+\frac{1}{2}}u^{m+\frac{1}{2}}_j=\epsilon f_j^{m+\frac{1}{2}},
\end{align}
and our linear extrapolation (LE) scheme, which uses the extrapolation to approximate the potential term $V_j^{m+\frac12}$, namely,  
\begin{align}\label{NS:relaxation}
i \, \dfrac{u_j^{m+1}-u_j^m}{\Delta t_m}+\mathcal{D}_2 u_j^{m+\frac{1}{2}}+\frac{1}{2}\left(\frac{2\Delta t_{m-1}+
\Delta t_m}{\Delta t_{m-1}} V^{m}_j- \frac{\Delta t_m}{\Delta t_{m-1}} V^{m-1}_j \right) u^{m+\frac{1}{2}}_j=\epsilon f_j^{m+\frac{1}{2}}.
\end{align}

The Neumann boundary conditions on both sides of the space interval are imposed by setting $u_{-1}=u_0$ and $u_N=u_{N+1}$ on the pseudo-points 
$x_{-1}$ and $x_{N+1}$.

We set the stochastic perturbation  as 
\begin{equation}\label{E:noise-discrete}
f^{m+\frac{1}{2}}_j \defeq \frac{1}{2} (u^m_j + u^{m+1}_j )\tilde{f}^{\, m+\frac{1}{2}}_j,
\end{equation}
where $\tilde{f}^{\, m+\frac{1}{2}}_j$ depends on the type of driving noise (four different example), which we describe next.

\section{Stochastic perturbation driven by a ${\mathcal Q}$-Wiener process}\label{Q-Wiener}

\subsection{Description of the driving noise}
 Let ${\mathcal Q}$ be a trace-class positive operator from $L^2_{\R}(\R)$ to itself. Recall that a ${\mathcal Q}$-Wiener process $W=\big\{ W(t)\}_{t\geq 0}$  is an 
$L^2_{\R}(\R)$-valued process with continuous trajectories, independent time increments,  with $W(0)=0$, and such that the distribution of $W(t)-W(s)$ is Gaussian
with mean zero and covariance operator $(t-s){\mathcal Q}$ on $L^2_{\R}(\R)$ for $0\leq s\leq t$.
 This implies that given instants $s,t \in [0,+\infty)$ and functions $u,v\in L^2_{\R}(\R)$, 
$$
{\mathbb E}\big[ (W(s),u) \; (W(t),v) \big] = \big( s\wedge t)\; ({\mathcal Q}u,v).
$$
Let $\{e_j\}_{j\geq 0}$ be an orthonormal basis of $L^2_{\R}(\R)$ such that ${\mathcal Q}\, e_j=\lambda_j e_j$ for $j\geq 1$. Then $\lambda_j > 0$ and 
$\sum_{j\geq 0} \lambda_j < \infty$. Note that the processes
$$
\beta_j(t) \defeq \frac{1}{\sqrt{\lambda_j}} \big( W(t),e_j\big), \quad t\geq 0, \quad j=1,2,\dots,
$$
are independent one-dimensional standard Brownian motions. Let $\phi:L^2_{\R}(\R) \to L^2_{\R}(\R)$ be the Hilbert-Schmidt operator defined by 
$\phi \, e_j= \sqrt{\lambda_j} e_j$. 
Then the Wiener process $W$ can be expanded as follows
\begin{equation}\label{E:W-trunc}
W(t)=\sum_{j\geq 0} \sqrt{\lambda_j}  \, \beta_j(t)\, e_j =  \sum_{j\geq 0} \beta_j(t)  \phi \, e_j. 
\end{equation}
We send the reader to \cite{daPZab} for further details. 

For practical reasons we only consider finitely many orthonormal functions $\{e_j\}_{0\leq j\leq N}$, thus, truncate the series in \eqref{E:W-trunc} accordingly. This
defines an approximation $W_N$ of $W$, namely, $W_N(t)=\sum_{j=0}^N \beta_j(t)  \phi \, e_j$. 
In order to study the energy, we need the operator  $\phi$  to be Hilbert-Schmidt from $L^2_{\R}(\R) $ to $H^1_{\R}(\R)$, and thus, require the functions $\{e_j\}$
to belong to $H^1_{\R}(\R)$. In the same spirit as in \cite{MRY2020}, we consider ``hat" functions $\{g_j\}_{j\geq 0}$ defined on the space interval $[x_j, x_{j+1}]$ as follows. Let $x_{j+\frac{1}{2}}=\frac{1}{2}\big[ x_j+x_{j+\frac{1}{2}}]$, $\Delta x_j= x_{j+1}-x_j$, 
and for $j=0, \cdots, N-1$, set
$$
g_j(x)\defeq \begin{cases}
c_j (x-x_j) \quad \mbox{\rm for} \quad x \in [x_j,x_{j+\frac{1}{2}}], \\
c_j (x_{j+1} -x) \quad \mbox{\rm for} \quad x \in [x_{j+\frac{1}{2}}, x_{j+1}], 
\end{cases}
$$ 
where $c_j \defeq \frac{2 \sqrt{3}}{(\Delta x_j)^{3/2}}$ 
is chosen to ensure  $\|g_j\|_{L^2}=1$. 
 
Given points $x_0< x_1< \cdots <x_N$, define the functions $e_j$'s, $j=0, \cdots, N$, by
\begin{align} \label{def_ej}
 \begin{cases}  e_j = &g_{j-1} 1_{[x_{j-\frac{1}{2}}, x_j]} + g_j 1_{[x_j, x_{j+\frac{1}{2}}]}, \;  1\leq j\leq N-1,  \\
  e_0=& \sqrt{2}\,  g_0 1_{[x_0, x_{\frac{1}{2}}]} , \quad e_N= \sqrt{2}\,  g_{N-1} 1_{[x_{N-\frac{1}{2}}, x_N]}.
  \end{cases}
\end{align} 
Since the functions $\{e_j\}_{j=0}^N$  have disjoint supports, they are orthogonal.
By symmetry of the functions $g_j$, we have $\|e_j\|_{L^2}=1$ for $j=0, \cdots, N$. We can now construct an orthonormal basis $\{e_k\}_{k\geq 0}$ 
of $L^2_{\R}(\R)$ containing  the above $\{e_j\}_{0\leq j\leq N}$. 
For our purposes we assume that $N$ is an even integer. For the first type of noise,  we suppose that ${\mathcal Q} e_j=\lambda_j e_j$, $j=0, ...,N$,
for some specific choice of eigenvalues $\lambda_j$. 

We then define the random variables $\tilde{f}^{\,m+\frac{1}{2}}_j$, describing the driving noise, as
$$
\tilde{f}^{\, m +\frac{1}{2}}_j \defeq \sqrt{\lambda_j} \; \frac{\sqrt{3}}{2}  
  \frac{\big[ \sqrt{\Delta x_{j-1}}+\sqrt{\Delta x_j} \big] } {\sqrt{\Delta t_m} 
   \big[  \Delta x_{j-1} + \Delta x_j\big] } \chi^{m+\frac{1}{2}}_j,
$$
where the random variables $\{ \chi^{m+\frac{1}{2}}_j : m=0,\dots, M-1, j=0,\dots, N\}$  are independent Gaussian random variables ${\mathcal N}(0,1)$.
This is consistent with \cite{MRY2020}, since  for the space-time white noise, all eigenvalues $\lambda_j$ are equal to 1.
The difference with the scheme used in \cite{MRY2020} is  that,  when moving away from the origin, the effect of the noise is reduced  by
 the factor  $\sqrt{\lambda_j}$, 
which in the following examples will depend on the distance between  $x_j$ and  0. 

We  consider two types of eigenvalues $\lambda_j=\Phi_\beta(|x_j|)$, 
defined in terms of  a function $\Phi_\beta(|x|)$,  which has either an
{\it exponential} (Gaussian-type) 
or a {\it polynomial} decay as $|x|$ grows. The positive parameter $\beta$ enables us to tune the decay.

\subsubsection{Example 1: {\bf Gaussian-type decay }} 

We set 
$$
\quad \Phi^{(1)}_\beta(x)= e^{ -(1-\beta) x^2 } ~~\mbox{\rm for}~~ \beta \in [0,1].
$$
First, observe that when $\beta \in [0,1)$, up to some normalizing constant,  $\Phi^{(1)}_\beta$ is a centered Gaussian kernel with variance $\frac{1}{2(1-\beta)}$. 
Thus, when $\beta$ approaches 1, it becomes more spread out. Hence, when $\beta=1$, the kernel is a constant function $\Phi^{(1)}_1=1$ and  our noise $W$ becomes an approximation $W_N$ of the space-time white noise, studied in \cite{DM2002a} and  \cite{MRY2020}. 

We define the operator $\phi^{(1)}_\beta$ as 
$$
\phi^{(1)}_\beta  e_j = 
\begin{cases}
\left(\Phi^{(1)}_\beta(|x_j|) \right)^{\frac{1}{2}}\, e_j, \quad & \mbox{for} ~~ j=0, \cdots, N, \\   
0, &\mbox{otherwise}.
\end{cases} 
$$ 
For $N$ even, a constant space mesh $\Delta x_j$, equal to $\Delta x  $ and  $\beta \in [0,1)$, 
$\phi^{(1)}_\beta$  is Hilbert-Schmidt from $L^2_{\R}(\R)$ to $H^1_{\R}(\R)$ and Radonifying from $L^2_{\R}(\R)$ to $\dot{W}^{1,\infty}_{\R}(\R)$. 
We then have  $L_c=\frac{N}{2} \Delta x$, and 
since $\{\Phi^{(1)}_\beta(j\Delta x)\}_{j>0}$ is decreasing, we deduce 
$$  
1+\frac{2}{\Delta x} \int_{\Delta x}^{L_c+\Delta x} e^{-(1-\beta) x^2} dx \leq \| \phi^{(1)}_\beta\|_{L^{0,0}_{2, \R}}^2 \leq 
1+\frac{2}{\Delta x} \int_0^{L_c} e^{-(1-\beta) x^2} dx.
$$
As $\Delta x\to 0$ and  $L_c\to \infty$, we get 
$$  
\| \phi^{(1)}_\beta\|_{L^{0,0}_{2, \R}}^2 
\sim  \frac{1}{\Delta x} \Big( \frac{\pi}{1-\beta}\Big)^{\frac{1}{2}},
\qquad 
\| \phi^{(1)}_\beta\|_{L^{0,1}_{2,\R}}^2 
\sim  \frac{12}{(\Delta x)^3} \Big( \frac{\pi}{1-\beta}\Big)^{\frac{1}{2}},
$$ 
and 
$$
m_{\phi^{(1)}_\beta} \sim \frac{12}{(\Delta x)^4} \Big( \frac{\pi}{1-\beta}\Big)^{\frac{1}{2}},
$$
which appears in the upper estimate  \eqref{energy_mul}. 

\subsubsection{  Example 2: {\bf Polynomial decay}} 
Fix a real number  $n\geq 1$, and  set
$$
\Phi^{(2)}_\beta(x)= \frac{1}{\big( 1+|x|\big)^{n(1-\beta)}} ~~~ \mbox{\rm for } ~~ \beta \in [0,1].
$$
(To ease notations, $n$ is omitted on the left-hand side.)
Note that when $\beta=0$, the decay is of the order $|x|^{-n}$ for large values of $|x|$,
the fastest in this setting, 
and as $\beta$ decreases, the noise becomes more regular. The parameter $n$ enables us to tune this decay. 
 
Let $\phi^{(2)}_\beta$ be the operator from $L^2_{\R}(\R)$ to $H^1_{\R}(\R)\cap L^\infty_{\R}(\R)$ defined by 
$$
\phi^{(2)}_\beta(e_j)= 
\begin{cases}
\left( \Phi^{(2)}_\beta(x_j)\right)^{\frac{1}{2}} e_j,
\quad & \mbox{for} ~~ j=0, \cdots, N, \\   
0, &\mbox{otherwise}.
\end{cases} 
$$ 
Note that if $\beta=1$, the operator $\phi^{(2)}_1$ is the identity  when restricted to ${\rm span}\, (e_0, ..., e_N)$.
This is the covariance  of the projection $W_N$  of the space-time white noise on that subspace (which was used in \cite{DM2002a} and \cite{MRY2020}).
As in the previous example, we suppose that  $N$ is even, and the space mesh is uniform (thus, equal to $\Delta x$) to obtain estimates
of various operator norms of $\phi^{(2)}_\beta$. 
We have 
$$ 
\|\phi^{(2)}_\beta\|_{L^{0,0}_{2,\R}}^2 = \sum_{j=0}^N \Phi^{(2)}_\beta(x_j) = 1+ 2\, \sum_{j=1}^{{N}/{2}} (1+j\Delta x)^{-n(1-\beta)}.
$$
We bound the last term (noting that $\{\Phi^{(2)}_\beta(j\Delta x)\}_j$ is decreasing) as 
$$ 
\int_{\Delta x}^{L_c+\Delta x} (1+x)^{-n(1-\beta)} dx \leq \Delta x\sum_{j=1}^{\frac{N}{2}} (1+j \Delta x)^{-n(1-\beta)} \leq \int_0^{L_c} (1+x)^{-n(1-\beta)} dx.
$$
Hence, for a fixed $L_c$, as $\Delta x\to 0$, we deduce
$$
\|\phi^{(2)}_\beta \|_{L^{0,0}_{2,\R}}^2 \sim \frac{1}{\Delta x} \int_{-L_c}^{L_c} (1+|x|)^{-n(1-\beta)} dx.
$$
Recalling a basic fact that the indefinite integral $I(a)\defeq\int_{-\infty}^\infty (1+|x|)^{-a} dx$ 
converges if and only if $a>1$, to the value $I(a)=\frac{2}{a-1}$, we obtain that as $\Delta x\to 0$ and $L_c\to \infty$ 
$$ 
\|\phi^{(2)}_\beta \|_{L^{0,0}_{2,\R}}^2 \sim \frac{2}{\Delta x \, (n-1-n\beta)} \quad \mbox{\rm if and only if } \beta \in \Big[0,\frac{n-1}{n}\Big) .
$$
A similar computation  for the same range $0 \leq \beta < \frac{n-1}{n}$ yields
$$
\|\phi^{(2)}_\beta \|_{L^{0,1}_{2,\R}}^2 \sim \frac{24 }{(\Delta x)^3\, (n-1-n\beta)},
$$
and 
$$
\quad m_{\phi^{(2)}_\beta} \sim   \frac{24}{(\Delta x)^4\,  (n-1-n\beta)}.
$$
Note that for $\beta \in \big[ 0, 1-\frac{1}{n}\big)$, the above upper estimates for  Hilbert-Schmidt and Radonifying norms are insensitive to the length $L_c$, however, depend on the space mesh $\Delta x$. We remark that in this range of $\beta$ we have a discretization of a ${\mathcal Q}$-Brownian motion. 

For $\beta(n) = \frac{n-1}{n}$ we have  as $\Delta x\to 0$
$$
\|\phi^{(2)}_{\beta(n)} \|_{L^{0,0}_{2,\R}}^2 \sim \frac{2}{\Delta x} \ln(L_c+1), \quad \|\phi^{(2)}_{\beta(n)} \|_{L^{0,1}_{2,\R}}^2 \sim \frac{24}{(\Delta x)^3} \ln( L_c+1), ~~ \mbox{\rm and } \; 
m_{\phi^{(2)}_{\beta(n)}} \sim   \frac{24}{(\Delta x)^4}\ln(L_c+1).
$$

Finally, for $\beta \in \big( \frac{n-1}{n} ,1]$ we have  
$$
\int_{-L_c}^{L_c} (1+|x|)^{-n(1-\beta)} dx =  \frac{2}{n\beta -n+1} \big[ (L_c+1)^{n\beta -n+1} -1\big].
$$
Hence, as $\Delta x \to 0$ and $L_c\to \infty$, when $\frac{n-1}{n}<\beta\leq 1$, we obtain $$
\|\phi^{(2)}_\beta \|_{L^{0,0}_{2,\R}}^2 \sim \frac{2}{\Delta x (n\beta -n+1)} \big[ (L_c+1)^{n\beta -n+1} -1\big];
$$
by a similar computation when $\Delta x\to 0$ and $L_c\to \infty$, we get 
$$
\|\phi^{(2)}_\beta \|_{L^{0,1}_{2,\R}}^2 \sim \frac{24 \, \big[ (L_c+1)^{n\beta -n+1} -1\big] }{(\Delta x)^3\, (n\beta -n+1)},
$$
and 
$$
m_{\phi^{(2)}_\beta} \sim  \frac{24\, \big[ (L_c+1)^{n\beta -n+1} -1\big]}{(\Delta x)^4\,( n\beta -n+1)}.
$$
We note that when $\beta \in \big[ 1-\frac{1}{n},1]$, we no longer have the discretization of a ${\mathcal Q}$-Brownian motion taking values in $L^2_{\R}(\R)$.

\subsection{Discrete mass and energy; upper bounds on energy}

Consider the discrete mass
\begin{align}\label{D: Dmass}
M_{\mathrm{dis}}[u^m] \defeq \frac{1}{2} \sum_{j=0}^{N} |u^m_j|^2 \left( \Delta x_j + \Delta x_{j-1} \right),
\end{align}
which is conserved in our stochastic setting. Indeed, the proof of \cite[Lemma 2.1]{MRY2020} shows that the above three schemes \eqref{mass-energy}, \eqref{NS: crank-nicholson} and \eqref{NS:relaxation} conserve the discrete mass \eqref{D: Dmass}  at each time step: 
$M_{\mathrm{dis}}[u^m]=M_{\mathrm{dis}}[u^{m+1}]$, $m=0, ..., M-1$. This proof relies only on the fact that the noise is real-valued, multiplicative, and that we use the Stratonovich integral, which gives rise to  $\frac{1}{2} (u^m_j + u^{m+1}_j ) $ in the scheme. 

We next define the discrete energy adapted to the non-uniform mesh case
\begin{equation}\label{dis-energy}
H_{\rm dis}[u^m] \defeq  \frac{1}{2} \sum_{j=0}^N \Big| \frac{u^m_{j+1}-u^m_j}{\Delta x_j}\Big|^2 \Delta x_j - \frac{1}{4(\sigma+1)} \sum_{j=0}^N
|u^m_j|^{2(\sigma +1)} (\Delta x_{j-1}+ \Delta x_j).  
\end{equation} 
In the deterministic case ($\epsilon=0$), the MEC scheme \eqref{mass-energy} conserves the discrete energy, i.e., $H_{\rm dis}[u^{m+1}] \equiv H_{\rm dis}[u^{m}]$, which is proved  by multiplying $\frac{1}{2}(\bar{u}^{m+1}-\bar{u}^{m})(\Delta x_j+\Delta x_{j-1})$, summing from $j=0$ to $j=N$ and taking the real part.

In the stochastic setting, energy is not conserved, and the following proposition provides upper estimates on the time evolution of the average of an instantaneous and a maximal discrete energy. For simplicity we consider the scheme \eqref{mass-energy} with a constant space and time mesh. In that case the discrete energy \eqref{dis-energy} simplifies to
$$
H_{\rm dis}[u^m] =  \frac{1}{2} \sum_{j=0}^N \Big| \frac{u^m_{j+1}-u^m_j}{\Delta x}\Big|^2 \Delta x - \frac{1}{2(\sigma+1)} \sum_{j=0}^N
|u^m_j|^{2(\sigma +1)} \, \Delta x.
$$
Let $\tau^*_{\rm dis}$ denote the existence time of the discrete MEC scheme. 
\begin{proposition}\label{max_H_multi}
Let $u_0\in H^1$, $\phi e_j= \sqrt{\Phi_\beta(x_j)}\,  e_j$  be the covariance described in terms of  a function $\Phi_\beta$,
and $t_M<\tau^*_{\rm dis}$ be a point of the time grid for $N$ even and  constant space and time meshes. 
 Set $C=\frac{\sqrt{3}}{2} \big( 1+\frac{\sqrt{2e}}{\sqrt{\pi}}\big)$.  Then 
\begin{align}	
{\mathbb E}\big(  H_{\rm dis} [u^M] \big) &\leq H_{\rm dis}[u^0] + \frac{\epsilon}{2} \,   M_{\rm dis}[u^0]  \frac{C}{\sqrt{\Delta x} \big( \Delta t\big)^{\frac{3}{2}}} \, 
\Big[ \sqrt{\Phi_\beta(0) }+2 \sum_{j=1}^{\frac{N}{2}}  \sqrt{\Phi_\beta(j \Delta x)}\Big]\; t_M,  
\label{E_H_mult}
\\
{\mathbb E}\Big( \max_{0\leq m\leq M} H_{\rm dis}[u^m] \Big) &\leq \, H_{\rm dis}[u^0] + \epsilon \,  M_{\rm dis}[u^0]  \frac{C}{\sqrt{\Delta x} \big( \Delta t\big)^{\frac{3}{2}}} \, 
\Big[ \sqrt{\Phi_\beta(0) }+2 \sum_{j=1}^{\frac{N}{2}}  \sqrt{\Phi_\beta(j \Delta x)}\Big]\; t_M. 
\label{E_max_H_mult}
\end{align}
\end{proposition}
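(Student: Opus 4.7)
The plan is to adapt the deterministic proof of discrete energy conservation for the MEC scheme \eqref{mass-energy} and isolate the stochastic correction. I multiply \eqref{mass-energy} at each node $j$ by $(\bar u_j^{m+1}-\bar u_j^m)\Delta x$, sum over $j=0,\dots,N$ (with the Neumann pseudo-points), and take the real part. The time-difference term is purely imaginary and vanishes; summation by parts on $\mathcal D_2 u_j^{m+1/2}$ produces $-\tfrac12\bigl(\|\nabla_{\rm dis}u^{m+1}\|^2 - \|\nabla_{\rm dis}u^m\|^2\bigr)$; and the symmetrized nonlinearity in \eqref{mass-energy} was designed precisely so that
$$
\Re\!\left[\tfrac{1}{\sigma+1}\tfrac{|u^{m+1}_j|^{2(\sigma+1)}-|u^m_j|^{2(\sigma+1)}}{|u^{m+1}_j|^2-|u^m_j|^2}\,u^{m+1/2}_j(\bar u^{m+1}_j-\bar u^m_j)\right] = \tfrac{1}{2(\sigma+1)}\bigl(|u^{m+1}_j|^{2(\sigma+1)}-|u^m_j|^{2(\sigma+1)}\bigr),
$$
which telescopes to the discrete potential energy. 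Collecting these, the left-hand side equals $-(H_{\rm dis}[u^{m+1}] - H_{\rm dis}[u^m])$ and only the noise contribution survives on the right. Substituting $f^{m+1/2}_j = \tfrac12(u^m_j+u^{m+1}_j)\tilde f^{m+1/2}_j$ with $\tilde f^{m+1/2}_j$ real-valued and using the algebraic identity $\Re\bigl[(u^m+u^{m+1})(\bar u^{m+1}-\bar u^m)\bigr] = |u^{m+1}|^2 - |u^m|^2$ yields the clean one-step identity
$$
H_{\rm dis}[u^{m+1}] - H_{\rm dis}[u^m] = -\tfrac{\epsilon}{2}\sum_{j=0}^N \tilde f^{m+1/2}_j \bigl(|u^{m+1}_j|^2 - |u^m_j|^2\bigr)\Delta x.
$$

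For bound \eqref{E_H_mult} I take expectation and exploit that $|u^m_j|^2$ is $\mathcal F_m$-measurable while $\tilde f^{m+1/2}_j$ is a centered Gaussian independent of $\mathcal F_m$: the $|u^m_j|^2$ term vanishes in expectation, and what remains is bounded in absolute value by $\max_j|\tilde f^{m+1/2}_j|\cdot M_{\rm dis}[u^{m+1}]$, which by the a.s.\ discrete mass conservation proved in \cite{MRY2020} equals $\max_j|\tilde f^{m+1/2}_j|\cdot M_{\rm dis}[u^0]$; telescoping in $m$ and taking expectation then give the prefactor $\tfrac{\epsilon}{2}$. For bound \eqref{E_max_H_mult} this cancellation is unavailable, so I instead use $\max_{0\leq m\leq M}H_{\rm dis}[u^m]\leq H_{\rm dis}[u^0] + \sum_{m=0}^{M-1}|H_{\rm dis}[u^{m+1}]-H_{\rm dis}[u^m]|$ and bound each absolute increment by $\tfrac{\epsilon}{2}\max_j|\tilde f^{m+1/2}_j|\bigl(M_{\rm dis}[u^{m+1}]+M_{\rm dis}[u^m]\bigr) = \epsilon\, M_{\rm dis}[u^0]\max_j|\tilde f^{m+1/2}_j|$, which accounts for the doubling of the noise coefficient between the two bounds.

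Both bounds thus reduce to estimating $\sum_{m=0}^{M-1}\mathbb E\max_{0\leq j\leq N}|\tilde f^{m+1/2}_j|$. Using the explicit form $\tilde f^{m+1/2}_j = \tfrac{\sqrt 3}{2\sqrt{\Delta x\,\Delta t}}\sqrt{\Phi_\beta(|x_j|)}\,\chi^{m+1/2}_j$ (uniform mesh) with i.i.d.\ $\mathcal N(0,1)$ random variables $\chi^{m+1/2}_j$, the task reduces to bounding $\mathbb E\max_j\sqrt{\Phi_\beta(|x_j|)}\,|\chi^{m+1/2}_j|$ by the \emph{sum} $\sum_j \sqrt{\Phi_\beta(|x_j|)}$ rather than its maximum. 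I would apply the centering estimate $\max_j Y_j \leq 1 + \sum_j (Y_j-1)_+$ to $Y_j=\sqrt{\Phi_\beta(|x_j|)}|\chi^{m+1/2}_j|$ and compute the expectation of the excess via the standard Gaussian density, which produces the additive factor $\sqrt{2e/\pi}$ and hence the declared $C=\tfrac{\sqrt 3}{2}(1+\sqrt{2e/\pi})$. The symmetry of the uniform grid about the origin rewrites $\sum_{j=0}^N\sqrt{\Phi_\beta(|x_j|)}$ as $\sqrt{\Phi_\beta(0)} + 2\sum_{j=1}^{N/2}\sqrt{\Phi_\beta(j\Delta x)}$, and $M\Delta t = t_M$ yields the prefactor $t_M/(\sqrt{\Delta x}\,(\Delta t)^{3/2})$. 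The main technical obstacle is this last step: one must avoid the naive $\sqrt{\log N}$ factor from a sub-Gaussian maximal inequality and instead obtain a bound \emph{linear in the sum of the weights} $\sqrt{\Phi_\beta(|x_j|)}$, so that the final estimate remains meaningful as $N\to\infty$ whenever those weights decay (as in Examples 1 and 2).
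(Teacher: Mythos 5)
Your argument is correct and follows the paper's proof almost step for step: the one-step energy identity for the MEC scheme, the split of $|u_j^{m+1}|^2-|u_j^m|^2$ into an adapted part (whose expectation vanishes because $\tilde f^{\,m+\frac12}$ is centered and independent of the past) plus a non-adapted part bounded by $M_{\rm dis}[u^0]\max_j|\tilde f_j^{\,m+\frac12}|$ via discrete mass conservation, and the factor-of-two loss for the running maximum --- this is exactly \eqref{upp_H_multi}--\eqref{upp_H_multi_1}. The one genuine divergence is in the weighted Gaussian maximal inequality. The paper proves \eqref{E_max_G}, i.e.\ ${\mathbb E}\max_k\gamma_k|G_k|\le\big(1+\sqrt{2e/\pi}\big)\sum_k\gamma_k$, by a union bound on the tail integrated from the \emph{weight-proportional} thresholds $c_k=\gamma_k$, which yields a bound homogeneous of degree one in the weights. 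Your centering at the constant $1$ via $\max_kY_k\le 1+\sum_k(Y_k-1)_+$ instead gives ${\mathbb E}\max_k\gamma_k|G_k|\le 1+\sqrt{2/(e\pi)}\,\sum_k\gamma_k$ (for $\gamma_k\le1$); this implies the stated constant $C$ only because $\sum_k\gamma_k\ge\sqrt{\Phi_\beta(0)}=1$ in Examples 1 and 2, so the additive $1$ can be absorbed into $C\sum_k\gamma_k$. If $\Phi_\beta(0)$ were not normalized to $1$, your estimate would not reduce to a multiple of $\sqrt{\Phi_\beta(0)}+2\sum_{j\ge1}\sqrt{\Phi_\beta(j\Delta x)}$; to get the proposition exactly as stated for a general $\Phi_\beta$ you should switch to weight-dependent thresholds as in the paper, or state explicitly that you are using the normalization $\Phi_\beta(0)=1$.
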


\begin{proof}
The approach is similar to that of  \cite[Prop. 3.2]{MRY2020}, though we include it for the sake of completeness. Multiplying the equation \eqref{mass-energy} by $- \Delta x\, (\bar{u}_j^{m+1}-\bar{u}^m_j)$, summing over $m=0,\dots, M-1$ and $j=0,\dots, N$, and using the conservation of the discrete energy in the deterministic case, we deduce that for some real-valued random variable $R(M,N)$, which changes from one line to the next,
\begin{align} 	\label{upp_H_multi}
&H_{\rm dis}[u^M]=\, H_{\rm dis}[u^0] +i R(M,N) + \epsilon \, \Delta x \,   \sum_{m=0}^{M-1} \sum_{j=0}^N  (\bar{u}^{m+1}_j-\bar{u}^m_j) 
 \, \frac{1}{2}\big( u^{m+1}_j+u^m_j\big)
 {f}^{m+\frac{1}{2}}_j  \nonumber \\
 =& \quad H_{\rm dis }[u^0] + i R(M,N)   + \frac{ \epsilon\, \Delta x}{2}   \sum_{m=0}^{M-1} \sum_{j=0}^N   \, \big(  |u^{m+1}_j|^2 - |u^{m}_j|^2\big) \, 
   {f}^{m+\frac{1}{2}}_j, \\
 = &\quad H_{\rm dis }[u^0] + i R(M,N)   - \frac{\epsilon}{2 \Delta t }  \!\!   \, \int_0^{t_M}\!\!   \int_{\R} 
  |{U}(s,x)|^2 W_N(ds,dx) 
+  \frac{ \epsilon\, \Delta x}{2}   \sum_{m=0}^{M-1} \sum_{j=0}^N   \,  |u^{m+1}_j|^2 \,   {f}^{m+\frac{1}{2}}_j,  \label{upp_H_multi_1}
\end{align} 
where $U(s,x)$ is the step process defined by $U(s,x)=u^m_j$ on the rectangle $[t_m,t_{m+1})\times [x_{j-\frac{1}{2}} , x_{j+\frac{1}{2}})$. 
Since the discrete mass is preserved by the scheme, we have 
$$ 
\frac{ \epsilon\, \Delta x}{2}   \sum_{m=0}^{M-1} \sum_{j=0}^N   \,  |u^{m+1}_j|^2 \,   {f}^{m+\frac{1}{2}}_j  
\leq \frac{\epsilon\, M_{\rm dis}[u^0]}{2} \sum_{m=0}^{M-1} \max_{0\leq j\leq N} | {f}^{m+\frac{1}{2}}_j|. 
$$ 
Using the definition of ${f}^{m+\frac{1}{2}}_j$, we deduce  
$$ 
E\Big( \max_{0\leq j\leq N} | {f}^{m+\frac{1}{2}}_j| \Big) =\frac{\sqrt{3}}{2}\;  \frac{1}{\sqrt{\Delta t} \sqrt{\Delta x}}\;  E\Big( \max_{0\leq j\leq N}  \alpha_j 
| \chi^{m+\frac{1}{2}}_j| \Big),
$$
where the random variables $\chi^{m+\frac{1}{2}}_j$ are independent standard Gaussians and $\alpha_j=\sqrt{\Phi_\beta(x_j)}$. 
 
Next, we note the fact that if $\{G_k, \, k=1,\dots,n\}$ are independent standard Gaussians and $B_n=\max_{1\leq k\leq n} \{ \gamma_k\, |G_k| \}$ 
for positive constants $\{\gamma_k\}$, then for $n\geq 2$ 
\begin{equation}\label{E_max_G}
{\mathbb E}(B_n) \leq \Big( 1+\frac{\sqrt{2e}}{\sqrt{\pi}}\Big) \sum_{k=1}^n \gamma_k. 
\end{equation}
Observe that this upper estimate is relevant in the case when the infinite series $\sum_{k\geq 0} \gamma_k$ is convergent. 
When $\{\gamma_k\}_k$  is a constant sequence, 
the upper estimate (3.19), used in the proof of \cite[Prop 3.2]{MRY2020}, gives a sharper upper bound. We next prove \eqref{E_max_G}, noting that the proof differs from the one done in \cite[Prop 3.2]{MRY2020}. For every $t>0$ we have 
$$ 
P(B_n \leq  t) =  \prod_{k=1}^n P(\gamma_k |G_k|\leq t) 
=  \prod_{k=1}^n \Big[ 1- P\Big( |G_1| > \frac{t}{\gamma_k} \Big)\Big] 
 \geq 1-\sum_{k=1}^n P\Big(|G_1| > \frac{t}{\gamma_k}\Big),
$$ 
and we deduce  that  
$$
{\mathbb E}(B_n)  = \int_0^\infty\! P(B_n > t)\,  dt  
 \leq \sum_{k=1}^n  \Big[ c_k +\int_{c_k}^\infty P\Big(|G_1|> \frac{t}{\gamma_k} \Big) dt \Big] 
$$
for any choice of positive constants  $\{c_k\}_k$. Using the tail estimate  
$$
P(|G_1| > t) = \frac{2}{\sqrt{2\pi}} \int_t^\infty e^{-\frac{x^2}{2}} dx \leq \frac{2}{\sqrt{2\pi}} \frac{1}{t} \int_t^\infty x e^{-\frac{x^2}{2}} dx 
= \sqrt{\frac{2}{\pi}}  \frac{1}{t}  e^{-\frac{t^2}{2}},
$$ 
we obtain
\begin{align*}
{\mathbb E}(B_n)  \leq &\sum_{k=1}^n \Big[ c_k  +  \sqrt{\frac{2}{\pi}}\, \int_{c_k} ^\infty \frac{\gamma_k}{t} e^{-\frac{1}{2}\, \big(\frac{t}{ \gamma_k}\big)^2}\;  dt\Big]  
 \leq \sum_{k=1}^n \Big[  c_k+  \sqrt{\frac{2}{\pi}}\, \gamma_k  \int_{\frac{c_k}{\gamma_k}}^\infty \frac{e^{-\frac{t^2}{2}}}{t}  dt\Big] \\
 \leq&\,  \sum_{k=1}^n \Big[ c_k + \gamma_k \Big( \frac{\gamma_k}{c_k} \Big)^2  \sqrt{\frac{2}{\pi}} e^{-\frac{1}{2}\big( \frac{c_k}{\gamma_k}\big)^2} \Big].  
\end{align*} 
Choosing $c_k \defeq \gamma_k$, we deduce \eqref{E_max_G}. 
 
Keeping the real part of  \eqref{upp_H_multi_1}, we get that  for $C= \frac{\sqrt{3}}{2}\;  \big(1+\frac{\sqrt{2e}}{\sqrt{\pi}}\big)$, 
\begin{align*}
{\mathbb E} \big( H_{\rm dis}[u^M]\big) \leq &\; H_{\rm dis}[u^0] +  \frac{\epsilon}{2} M_{\rm dis}[u^0]   \, M\, \frac{C}{\sqrt{\Delta t}\, \sqrt{\Delta x}}\, 
\Big[ \sqrt{\Phi_\beta(0) }+2 \sum_{j=1}^{\frac{N}{2}}  \sqrt{\Phi_\beta(j \Delta x)}\Big] \\
\leq & \; 
H_{\rm dis}[u^0] + \epsilon  M_{\rm dis}[u^0]  \frac{C}{\sqrt{\Delta x} \big( \Delta t\big)^{\frac{3}{2}}} \, 
\Big[ \sqrt{\Phi_\beta(0) }+2 \sum_{j=1}^{\frac{N}{2}}  \sqrt{\Phi_\beta(j \Delta x)}\Big]\; t_M.    \
\end{align*}
This completes the proof of \eqref{E_H_mult}.

To prove \eqref{E_max_H_mult},  keeping the real part of  \eqref{upp_H_multi} and upper estimating $| u^{m+1}_j|^2 - |u^m_j|^2$ by
$| u^{m+1}_j|^2 + |u^m_j|^2$, we obtain 
$$ 
\max_{0\leq m\leq M} H_{\rm dis}[u^M]= H_{\rm dis}[u^0]  +  \frac{ \epsilon\, \Delta x}{2}   \sum_{m=0}^{M-1} \sum_{j=0}^N   \,  \big( |u^{m+1}_j|^2 + |u^m_j|^2\big)
\,   |{f}^{m+\frac{1}{2}}_j|.  
$$  
The same argument as for \eqref{E_H_mult} concludes the proof. 
\end{proof}

We next give explicit bounds \eqref{E_H_mult}-\eqref{E_max_H_mult} for the two examples described above.
 
\underline{\bf Example 1:} From  $\Phi^{(1)}_\beta(x)=e^{ - (1-\beta) x^2}$, we have $\Phi^{(1)}_\beta(0)=1$, and for  $\beta \in [0,1)$ 
$$
\sum_{j=1}^{\frac{N}{2}} \sqrt{\Phi^{(1)}_\beta (j \Delta x)} \leq \frac{1}{\Delta x} \int_0^{L_c} e^{-\frac{(1-\beta)}{2} x^2}\, dx  
\leq \frac{1}{\Delta x}\,  \frac{\sqrt{2\pi}}{2} \frac{1}{\sqrt{1-\beta}}. 
$$ 
Thus, 
$$
{\mathbb E} \big(  H_{\rm dis} [u^M] \big) 
\leq H_{\rm dis}[u^0] + \frac{\epsilon}{2} \, M_{\rm dis}[u^0] \, \frac{C}{\sqrt{\Delta x} \big( \Delta t\big)^{\frac{3}{2}}} \, 
\Big[ 1 +   \frac{\sqrt{2\pi}}{\Delta x \, \sqrt{1-\beta}} \Big] \; t_M; 
$$
 a similar bound holds for \eqref{E_max_H_mult}. 
 
\underline{\bf Example 2:} From $\Phi^{(2)}_\beta(x)= {(1+|x|)^{-n(1-\beta)}}$, we have $\Phi^{(2)}_\beta(0)=1$ and
$$
\sum_{j=1}^{\frac{N}{2}} \sqrt{\Phi^{(1)}_\beta (j \Delta x)} \leq \frac{1}{\Delta x} \int_0^{L_c} (1+x)^{-\frac{n(1-\beta)}{2}}\, dx.
$$ 
If $\beta \in \big[0, 1-\frac{2}{n}\big)$, we deduce that 
$$
\sum_{j=1}^{\frac{N}{2}} \sqrt{\Phi^{(2)}_\beta (j \Delta x)} \leq   \frac{2}{\big[ n(1-\beta) -2\big] \Delta x}.
$$
If $\beta \in \big[  1-\frac{2}{n}, 1\big)$, then the above sum also depends on $L_c$, more precisely, 
\begin{align*} 
\sum_{j=1}^{\frac{N}{2}} \sqrt{\Phi^{(2)}_\beta (j \Delta x)} &\leq \frac{ \ln(L_c)}{\sqrt{\Delta x}} \quad \mbox{\rm for}\quad   \beta = 1-\frac{2}{n}, \\
\sum_{j=1}^{\frac{N}{2}} \sqrt{\Phi^{(2)}_\beta (j \Delta x)} &\leq \frac{2}{2-n(1-\beta)} \Big[ L_c^{1-\frac{n(1-\beta)}{2}} -1\Big]\, \frac{1}{\Delta x}
 \quad \mbox{\rm for}\quad  \beta \in \big( 1-\frac{2}{n}, 1\big).
\end{align*}
Substituting the above into \eqref{E_H_mult} or \eqref{E_max_H_mult}, we obtain the bounds in Example 2. 

From the above analysis, we find that the upper bounds for the discrete energy can depend on parameters $\Delta x$, $L_c$, $\Delta t$ and $\epsilon$.
We will next investigate this dependence numerically. 

\subsection{Numerical tracking of discrete energy}
We first show the accuracy of all three schemes in discrete mass and energy computations for the $\mathcal{Q}$-Brownian  driving noise. 
We take initial data of type $u_0=A\, Q$, where $Q$ is the ground state from \eqref{E:Q}, and obtain the error in computing the discrete mass 
(since it is supposed to be conserved) and then track the growth of the discrete energy (both instantaneous and maximum up to some given time t). 

\begin{figure}[ht]
\includegraphics[width=0.32\textwidth]{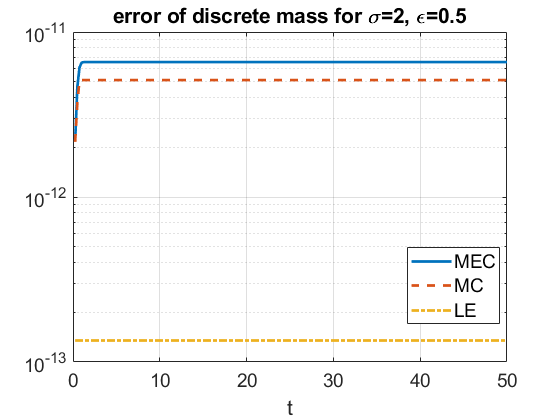} 
\includegraphics[width=0.32\textwidth]{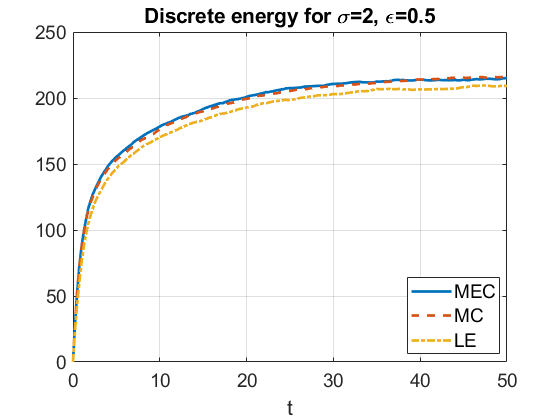} 
\includegraphics[width=0.32\textwidth]{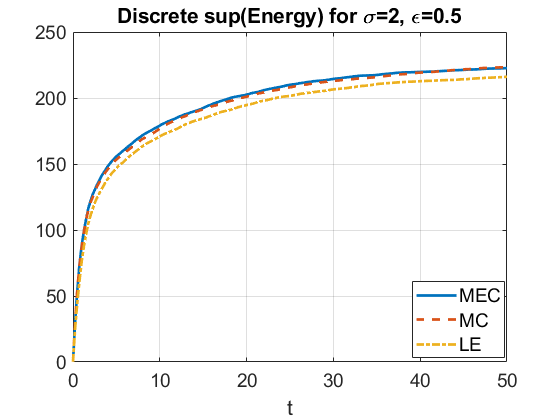}  
\caption{Accuracy of three schemes with noise in Example 2, $n=2$. The $L^2$-critical case ($\sigma =2$) with  $u_0=0.9Q$, $\beta=0.5$, $\epsilon=0.5$, 
$L_c=20$, $\Delta x=0.1$ and $\Delta t=0.01$.
The left plot is the error $\mathcal{E}^m[M]$ defined in \eqref{E:M-error} in computation of the discrete mass for all three schemes. The growth of average
instantaneous energy (middle) and maximum energy (right) for different numerical schemes.}
\label{F:scheme Ex2}
\end{figure} 

In Figure \ref{F:scheme Ex2} we show the accuracy of our computations in the $L^2$-critical case ($\sigma=2$) for the initial data $u_0 = 0.9 \, Q$. The left graph shows the accuracy of all three schemes in computing the discrete mass. The error 
is defined as 
\begin{equation}\label{E:M-error}
\mathcal{E}^m[M] \defeq \max_m \left\lbrace M_{\mathrm{dis}}[u^m] \right\rbrace-\min_m \left\lbrace M_{\mathrm{dis}}[u^m] \right\rbrace,
\end{equation}
and is on the order of $10^{-13}, \dots, 10^{-11}$, with the linear extrapolation (LE) scheme outperforming slightly the other two schemes (it does not accumulate any error from solving a nonlinear system in the fixed point iteration as the other two schemes). The middle and right subplots show the growth and leveling off of the expected value of energy in Example 2 (we omit Example 1 as it is similar and has faster decay), the instantaneous energy (in the middle) and the average of sup energy (on the right). The average here was computed out of 100 runs. 
For the purposes of (a large number of) multiple runs, it is significantly faster to use the LE scheme. 

We next investigate the time evolution of energy. We consider both $L^2$-critical and supercritical cases, and study solutions on the time interval $0\leq t \leq 100$. 
For that we take $u_0=AQ$ with $A=0.9$ in the $L^2$-critical ($\sigma =2$) case, and $A=0.8$ in the $L^2$-supercritical ($\sigma =3$) case. 
The reason for a smaller coefficient in the supercritical case is to ensure that solutions exist on this time interval (see more about that at the end of 
Section \ref{Numerical results}).   

\begin{figure}[ht]
\includegraphics[width=0.32\textwidth]{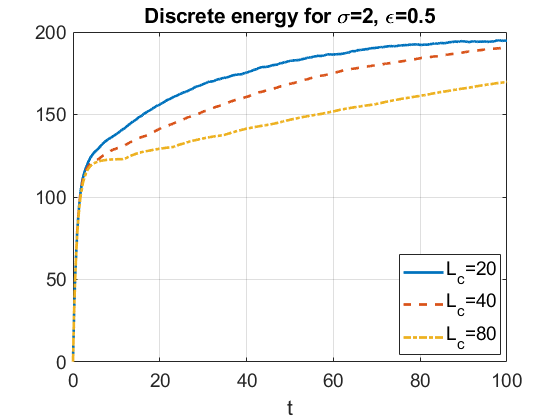} 
\includegraphics[width=0.32\textwidth]{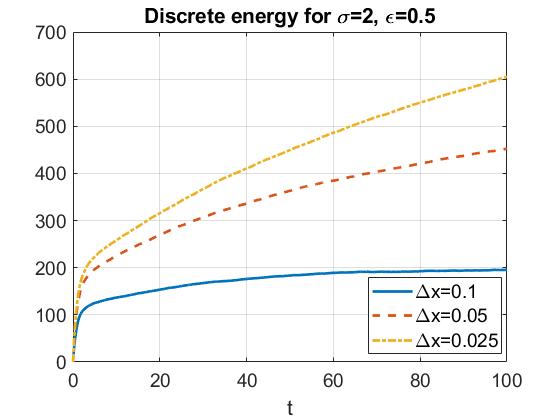} 
\includegraphics[width=0.32\textwidth]{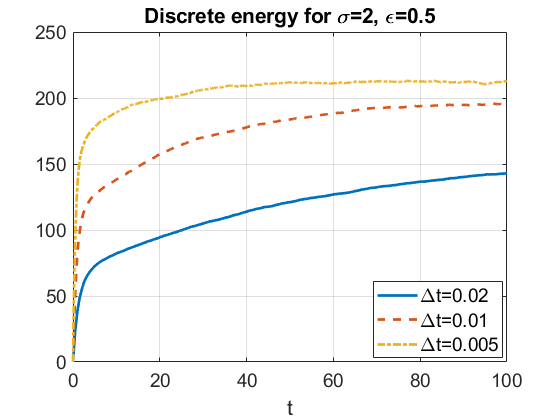}  
\caption{Time evolution of energy and its dependence on parameters $L_c$ (left), $\Delta x$ (middle) and $\Delta t$ (right), the noise is Gaussian-type decay kernel (Example 1) with 
$\beta=0.5$ and $\epsilon=0.5$.} 
\label{F:dx dt Lc Ex1}
\end{figure}

Figure \ref{F:dx dt Lc Ex1} tracks the time evolution of the discrete energy in
Example 1 (Gaussian-type decay of eigenvalues) and its dependence on $L_c$, 
$\Delta x$ and $\Delta t$. We note that there is leveling off in the dependence on $L_c$ and $\Delta t$, and there is an inverse dependence on $\Delta x$. 
In Figure \ref{Fig:beta eps Ex1} we track the dependence of energy on  correlation $\beta$ and noise strength $\epsilon$ in this example. 

\begin{figure}[ht]
\includegraphics[width=0.35\textwidth]{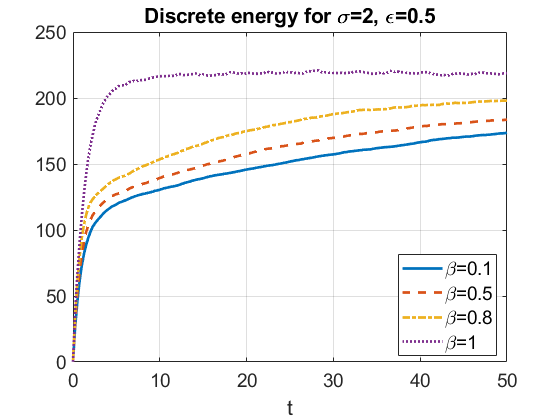} 
\includegraphics[width=0.35\textwidth]{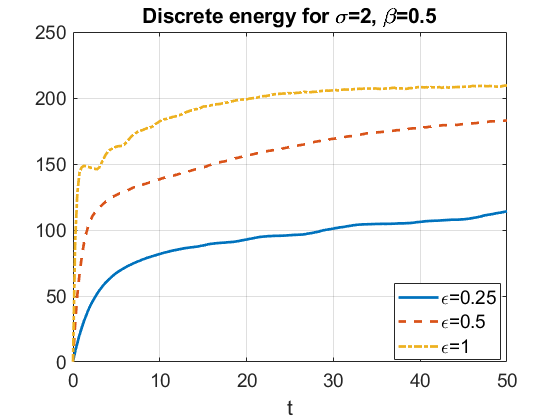}  
\includegraphics[width=0.35\textwidth]{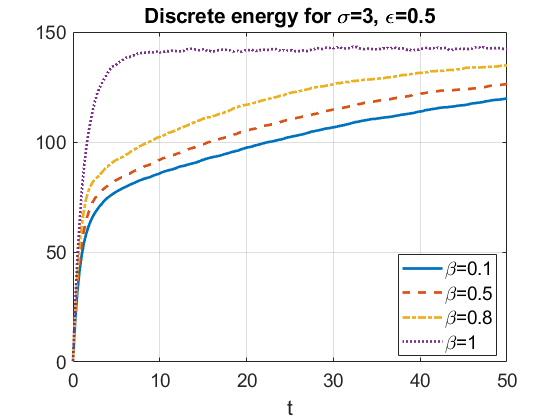} 
\includegraphics[width=0.35\textwidth]{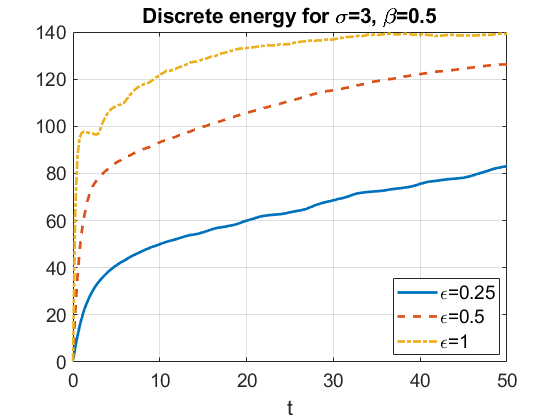}
\caption{The growth of energy for different values of $\beta$ (left) 
and $\epsilon$ (right) in Example 1 (Gaussian decay) with $L_c=20$, $\Delta x=0.1$ and $\Delta t=0.01$. 
Comparison is given in both $L^2$-critical and supercritical cases
for the same $\epsilon=0.5$ on the left two plots, and for the same $\beta=0.5$ on the right two plots.} 
\label{Fig:beta eps Ex1}
\end{figure}  

In Figures \ref{F:dx dt Lc Ex2}-\ref{F:dx dt Lc Ex2 n4} we study the time evolution of energy when the covariance of the driving  noise has a polynomial decay  
(Example 2).  In Figure \ref{F:dx dt Lc Ex2} we show how energy depends on $L_c$, $\Delta x$ and $\Delta t$ (note the dependence on $\Delta x$).

\begin{figure}[ht]
\includegraphics[width=0.32\textwidth]{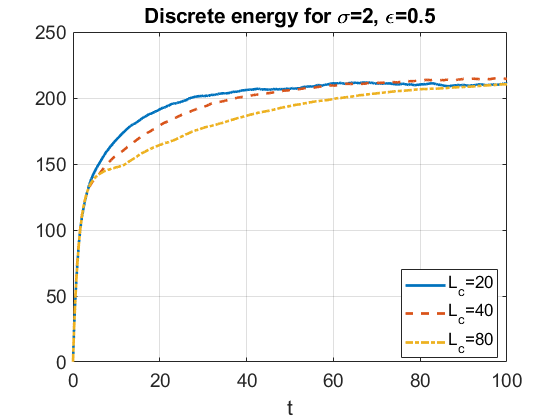}
\includegraphics[width=0.32\textwidth]{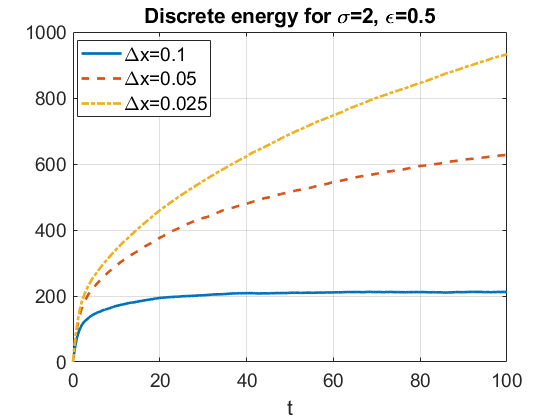} 
\includegraphics[width=0.32\textwidth]{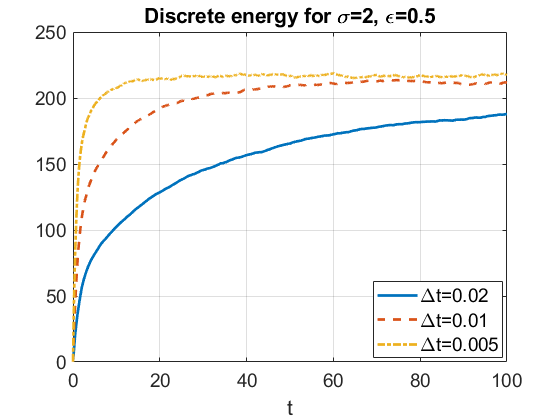}  
\caption{Time evolution of energy and its dependence on parameters $L_c$ (left), $\Delta x$ (middle) and $\Delta t$ (right) in Example 2 (polynomial decay) with $n=2$, 
$\beta=0.5$ and $\epsilon=0.5$.}
\label{F:dx dt Lc Ex2}
\end{figure}

In Figure \ref{F:beta eps Ex2} the dependence on the correlation parameter $\beta$ and the strength of the noise $\epsilon$ is shown, for $n=2$ 
(energy levels off, in some cases eventually; reaching the horizontal asymptote  faster for larger $\beta$, when the kernel is less spread out, or for larger 
$\epsilon$, when the strength of the noise is higher).  

\begin{figure}[ht]
\includegraphics[width=0.36\textwidth]{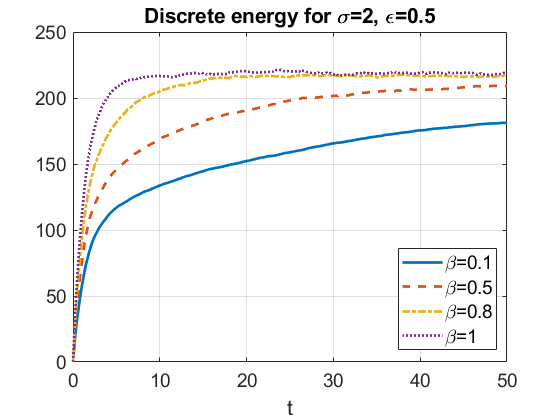}
\includegraphics[width=0.36\textwidth]{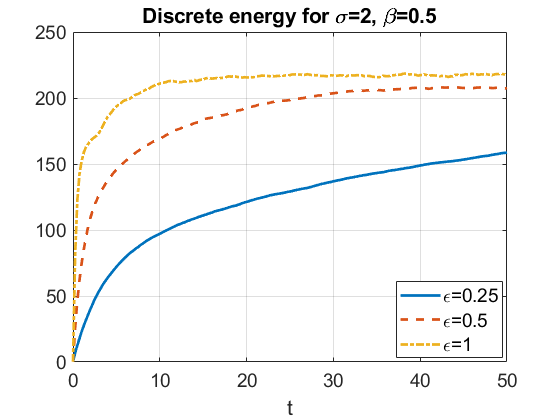}  
\includegraphics[width=0.36\textwidth]{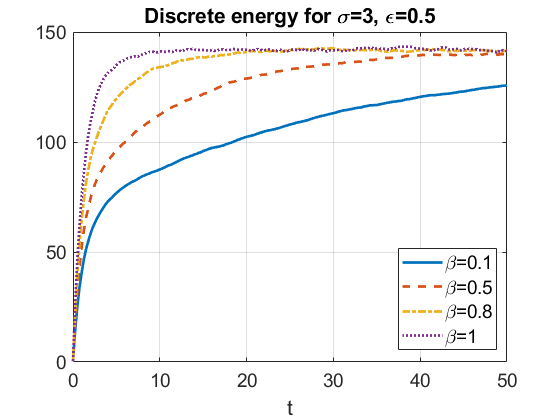}  
\includegraphics[width=0.36\textwidth]{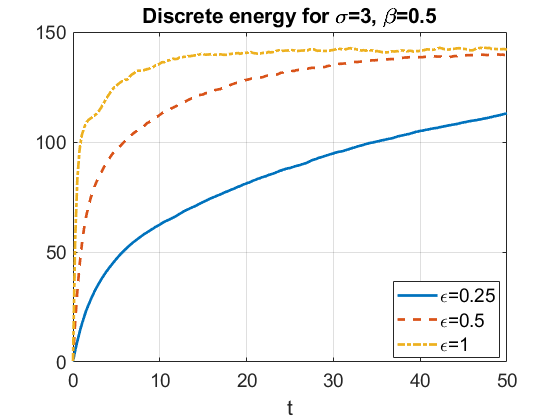}
\caption{The growth of energy for different values of $\beta$ (left) 
and $\epsilon$ (right) in Example 2 (polynomial decay) with $n=2$, $L_c=20$, $\Delta x=0.1$ and $\Delta t=0.01$. 
Comparison is given in both $L^2$-critical and supercritical cases
for the same $\epsilon=0.5$ on the left two plots, and for the same $\beta=0.5$ on the right two plots.} 
\label{F:beta eps Ex2}
\end{figure} 

In Figure \ref{F:dx dt Lc Ex2 n4} we take $n=4$ and vary the correlation parameter $\beta$, noting that for larger $\beta$ the energy gets slightly larger and stabilizes faster, and that there is almost no dependence on $L_c$.

\begin{figure}[ht]
\includegraphics[width=0.32\textwidth]{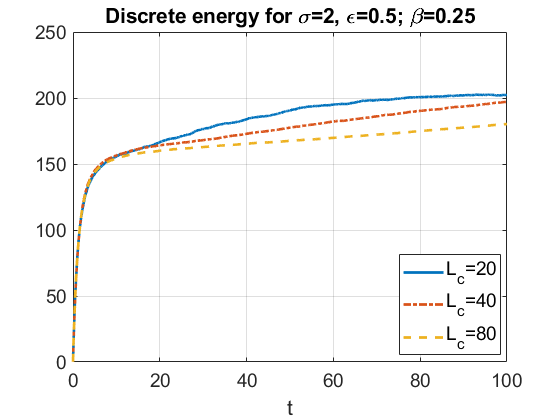}
\includegraphics[width=0.32\textwidth]{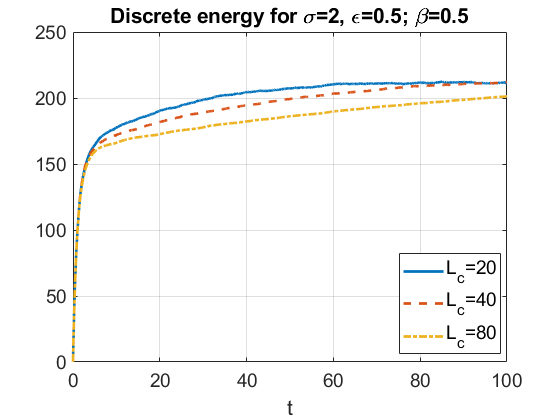} 
\includegraphics[width=0.32\textwidth]{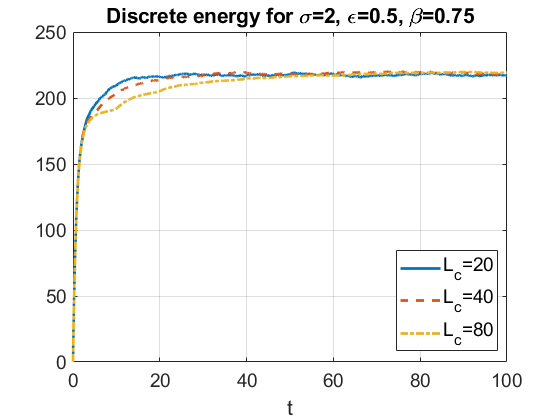}  
\caption{The growth of energy in Example 2 (polynomial decay) with  $n=4$, 
for different values of $L_c$ with $\beta=0.25$ (left), $\beta=0.5$ (middle) and $\beta=0.75$ (right).} 
\label{F:dx dt Lc Ex2 n4}
\end{figure}

We summarize that in both Example 1 and 2, the energy grows sharply in the beginning, then slows down in its growth and levels off: the larger the strength of the noise 
$\epsilon$ is, or the closer it is to the space-time white noise (in other words, the more irregular the noise becomes), or the smaller the time step is, then the faster the discrete energy levels off. It seems to be very sensitive to the space mesh size $\Delta x$,  but not sensitive to the length of the computational interval $L_c$.

\section{Stochastic perturbation driven by a homogeneous Wiener process}\label{homogen}
In this section we discuss another classical way to smooth the space-time white noise in the space variable. As in the previous section, 
the noise will be white in time and correlated in space. However, it will not be an $L^2_{\R}(\R)$-valued process and, as in \cite{DM2002a}  and \cite{MRY2020}, we will
have to consider a partial sum of an infinite diverging series. 

\subsection{Description of the driving noise}
Let ${\mathcal D}(\R^2)$ be the set of $C^\infty$-functions with compact support. Let $\tilde{W}= \{ W(\phi); \phi \in {\mathcal D}(\R^2)\}$ be an $L^2(\Omega)$-valued
centered  Gaussian process with covariance defined by
$$ 
{\mathbb E}\big(\tilde{W}(\phi)\, \tilde{W}(\psi)\big)  = J(\phi,\psi) \defeq \int_0^\infty \!\!  ds \int_{\R} dx \int_{\R}  \phi(s,x) \tilde{\Phi}(|x-y|) \psi(s,y) \, dy  ~~ \mbox{ for} ~~
\phi, \psi \in {\mathcal D}(\R^2).
$$
We assume that the function $\tilde{\Phi}$ (which may be defined almost everywhere)  is the density of a measure, which is the Fourier transform of a 
tempered symmetric measure $\mu$ (referred to as its spectral measure). 
Indeed, this requirement is a necessary and sufficient condition for 
$J(\cdot,\cdot)$ to be non-negative definite and define a covariance structure 
(for more details see \cite[p.5-6]{Da1999}). 
To stress the difference with examples in the previous Section \ref{Q-Wiener} we denote the covariance kernel by $\tilde{\Phi}$.

We are interested in two cases, which we call Examples 3 and 4. 
\subsubsection{Example 3: {\bf Riesz kernel}} 
Let $\beta\in (0,1)$, and recall the Riesz kernel, defined by
$$
R_\beta(x)=  |x|^{-\beta}\;  \mbox{\rm  for }\; x\neq 0, \quad \mbox{\rm and} \; R_\beta(0)= +\infty.
$$
In order to make sure that when $\beta \rightarrow 1$, the corresponding homogeneous noise   approaches  the space time white noise,  
as in  Examples 1 and 2  considered in Section \ref{Q-Wiener},   we 
modify the Riesz kernel, multiplying it by the constant $\frac{(1-\beta)(2-\beta)}{2}$, to read
$$
\tilde{\Phi}^{(3)}_\beta(x)=\frac{(1-\beta)(2-\beta)}{2} \,  |x|^{-\beta}\;  \mbox{\rm  for }\; x\neq 0, \quad \mbox{\rm and} \; \tilde{\Phi}^{(3)}_\beta (0)=+\infty.
$$
This  comes down to changing the coefficient $\epsilon$ by another one depending on $\beta$.  The Fourier transform of $R_\beta$ is the function 
$\nu_\beta (x) = C(\beta) R_{1-\beta}(x) $ for some positive constant $C(\beta)$. 
Note that it is symmetric and is the density of a measure, which is  a tempered distribution. 
The  Riesz kernel $|x|^{-\beta}$ has a singularity at the origin. 

 As $\beta \to 1$, we get the limiting case
$\beta=1$ of the modified kernel $\tilde{\Phi}^{(3)}_1$, which corresponds to the space-time white noise (see \eqref{eq_cov_Riesz} and discussion afterwards). 

\subsubsection{Example 4: {\bf Exponential kernel}} 
For $\beta >0$ we define $\tilde{\Phi}^{(4)}_\beta$ by
$$
\tilde {\Phi}^{(4)}_\beta(x) = e^{ - {\beta |x|}}.
$$
For large values of $|x|$, the decay of $\tilde {\Phi}^{(4)}_\beta(x)$ is exponential, hence, 
faster than that of the Riesz kernel $|x|^{-\beta}$ from Example 3, which is polynomial. The Fourier transform of $\tilde{\Phi}^{(4)}_\beta$ 
is the function $G(x)=\frac{2\,   \beta}{\beta^2 + 4 \pi^2  x^2}$; note that  the symmetric measure $G(x) dx$ is a tempered distribution. 

\subsubsection{Covariance matrices}
We do not deal with a diagonal matrix anymore as in Examples 1 and 2;  instead  we consider the covariance matrix
\begin{equation}\label{def_Gamma}
\Gamma(j,k) \defeq \int_{\R} dx \int_{\R}  \tilde{e}_j(x)\,  \tilde{\Phi}(|x-y|)  \,  \tilde{e}_k(y) \, dy, \quad j,k=0,\dots,N,
\end{equation}
for some choice of orthonormal vectors $\{\tilde{e}_j\}_{0\leq j\leq N}$. 
The assumptions made on the existence of the spectral measure
 of $\tilde{\Phi}(x) dx$ ensure that the symmetric $(N+1)\times (N+1)$-matrix  $\Gamma$ is positive definite. 
Let  $\phi$ be the operator defined by $\phi \, \tilde{e}_j(x) =\int_{\R} \tilde{e}_j(y) f(|x-y|) dy$, where 
$\tilde{\Phi}(x-y) = \int_{\R} f(|x-z|)  f(|z-y|) dz = \int_{\R} f(|x-y-z|) f(|z|) dz$.  
To make numerical computations easier, for this type of noise in Examples 3 and 4 we  use indicator functions $\tilde{e}_j$. Indeed, thanks to the regularization effect
 of the convolution used in the definition of $\phi$, the regularity of the function $\tilde{\Phi}$ makes it possible to have an $H^1_{\R}(\R)$-valued function 
 $\phi\, \tilde{e}_j$ when $\tilde{e}_j$ is an indicator function. 
 
Recalling that $x_{j+\frac{1}{2}}=\frac{1}{2} (x_j+x_{j+1})$, we define the functions $\{ \tilde{e}_j\}_{0\leq j\leq N}$ as 
\begin{align*}
\tilde{e}_j = & \frac{\sqrt{2}}{\sqrt{\Delta x_{j-1} + \Delta x_j}} 1_{[x_{j-\frac{1}{2}}, x_{j+\frac{1}{2}}]}, \quad j=1,\dots, N-1,\\
 \tilde{e}_0= & \frac{\sqrt{2}}{\sqrt{\Delta x_0}} 1_{[x_0, x_{\frac{1}{2}}]}, \quad \tilde{e}_N=\frac{\sqrt{2}}{\sqrt{ \Delta x_{N-1}}} 1_{[x_{N-\frac{1}{2}}, x_N]}.
\end{align*} 
Note that $\{\tilde{e}_j\}_{0\leq j\leq N}$ are orthonormal functions in $L^2_{\R}(\R)$. 
We will now write the covariance matrices explicitly for each of the above two examples. In order to produce the covariance matrix that will be
used to define the driving perturbation in our simulations, we renormalize  $\Gamma$ to $\tilde{\Gamma}$ defined by
\begin{equation}\label{tilde_Gamma}  
\tilde{\Gamma}(j,k)= \frac{2}{\sqrt{\Delta x_{j-1}+ \Delta x_j}\, \sqrt{\Delta x_{k-1}+\Delta x_k}} \, \Gamma(j,k), \quad j,k=0,\dots,N. 
\end{equation}

\underline{\bf Example 3:} For the  Riesz kernel the renormalized covariance matrix $\tilde{\Gamma}^{(3)}_\beta$ is defined for $j,k=0, ..., N$ by
\begin{align}		\label{eq_cov_Riesz} 
\tilde{\Gamma}^{(3)}_\beta(j,k)= 2 \left( \frac{  \big| x_{k+\frac{1}{2}} - x_{j-\frac{1}{2}} \big|^{2-\beta}  +
\big| x_{k-\frac{1}{2}} - x_{j+\frac{1}{2}} \big|^{2-\beta}\!  -  \big|x_{k+\frac{1}{2}}-x_{j+\frac{1}{2}}\big|^{2-\beta} \! 
- \big| x_{k-\frac{1}{2}} \! - x_{j-\frac{1}{2}}\big|^{2-\beta}}
{ (\Delta x_{j-1}+\Delta x_j) (\Delta x_{k-1}+\Delta x_k)} \right). 
\end{align}
Note that $\tilde{\Gamma}^{(3)}_\beta$ is positive definite. Furthermore, if 
$\beta=1$, it is easy to see that 
$\tilde{\Gamma}^{(3)}_1(k,k) = \frac{2}{\Delta x_{k-1}+\Delta x_k}$ and $\tilde{\Gamma}^{(3)}_1(j,k) = 0$  for $j,k=0, ..., N$, $j\neq k$. 
This is the renormalized version of the covariance matrix of the space-time white noise used in \cite{MRY2020}. 

\underline{\bf Example 4:} For the exponential kernel, 
the renormalized covariance matrix $\tilde{\Gamma}^{(4)}_\beta$ is defined by 
$$ 
\tilde{\Gamma}^{(4)}_\beta(k,k)= \frac{4}{(\Delta x_{k-1}+\Delta x_k)^2} \, 
\Big[ \frac{\Delta x_{k-1}+\Delta x_k}{\beta}  - \frac{2}{\beta^2} \big( 1- e^{-\frac{\beta}{2} (\Delta x_{k-1}+\Delta x_k}\big)  \Big]
$$
for $k=0,\dots, N$, and
\begin{align*}
\tilde{\Gamma}^{(4)}_\beta(j,k)= 4 \; \frac{ e^{-\beta\big| x_{k-\frac{1}{2}} - x_{j+\frac{1}{2}}\big|} + e^{-\beta\big| x_{k+\frac{1}{2}} - x_{j-\frac{1}{2}}\big|}
 - e^{-\beta\big| x_{k-\frac{1}{2}} - x_{j-\frac{1}{2}}\big|} - e^{-\beta\big| x_{k+\frac{1}{2}} - x_{j+\frac{1}{2}}\big|}}{
 \beta^2\,  \big( \Delta x_{j-1}+\Delta x_j\big) \,  \big(\Delta x_{k-1}+  \Delta x_k\big)}
\end{align*}
for $j\neq k$, $j,k=0,\dots, N$. 

\subsection{Covariance matrix computation, bounds on discrete energy}  As in Section \ref{Q-Wiener}, we use mass-conservative schemes. 
However, the functions $\{\tilde{e}_j\}_{0\leq j\leq N}$
do not give rise to a diagonal covariance matrix. This is due to the fact that the noise correlation involves a convolution, which has long-range effects and
gives rise to a full matrix.

In order to simulate a centered Gaussian $(N+1)$-dimensional vector with covariance matrix $\tilde{\Gamma}$, we use the Cholesky decomposition of 
$\tilde{\Gamma}$.  This is possible in Examples 3 and 4, since the covariance matrices are positive-definite.
 More precisely, we find a lower triangular matrix $A$ such that 
$\tilde{\Gamma} = A A^*$, where $A^*$ denotes the transposed matrix of $A$.
Therefore, if $Y=(Y_0,\dots, Y_N)$ denotes an $(N+1)$-dimensional Gaussian vector with independent components, which are standard Gaussian 
(i.e., ${\mathcal N}(0,1)$) random variables, the covariance matrix of $Y$ is ${\rm Id}$.   Let ${\mathcal A}$ be the linear operator on $\R^{N+1}$, whose matrix in the canonical basis is $A$.
Then    ${X \defeq \mathcal A}Y$ is a centered Gaussian random vector with covariance  matrix $\tilde{\Gamma}$. 
We then have to produce a vector $u^{m+1} \defeq (u^{m+1}_0, \dots, u^{m+1}_{N})$ 
in terms of the vector  $u^m=(u^{m}_0,\dots,u^{m}_{N})$ 
(and can no longer define   an isolated component 
$u^{m+1}_j$ in terms of $u^m_j$ for  some $j=0,\dots,N$). 

Let ${\chi}^{m+\frac{1}{2}} \defeq ({\chi}^{m+\frac{1}{2}}_0,\dots, {\chi}^{m+\frac{1}{2}}_N)$ denote a Gaussian vector whose components are independent ${\mathcal N}(0,1)$ random variables. Then set for $j=0,\dots,N$  
\begin{equation}\label{E:homog}
\tilde{f}^{\,m+\frac{1}{2}}   =  \frac{1}{\sqrt{\Delta t_m} }\; 
{\mathcal A}  {\chi}^{m+\frac{1}{2}}  
\quad \mbox{\rm and } \quad   
{f}^{m+\frac{1}{2}}_j = \frac{u^m_j + u^{m+1}_j}{2} \,\tilde{f}^{\, m+\frac{1}{2}}_j . 
\end{equation}
With this definition of the vector $\{f_j^{m+\frac{1}{2}}\}_j$, we define analogs of the three schemes in \eqref{mass-energy}--\eqref{NS:relaxation}.

\subsubsection{Upper bounds on discrete energy}
We remind that the discrete mass  $M_{\rm dis}[u]$ defined in \eqref{D: Dmass} is conserved, due to the fact that the noise is real-valued and the factor
$\frac{1}{2}(u^m_j+u^{m+1}_j)$ used to define $f^{m+\frac{1}{2}}_j$ corresponds to the discretization of the Stratonovich integral.
We next prove an upper bound of the average of the instantaneous and maximal discrete energy.
\begin{proposition}\label{prop-Hdis-hom}
Let $u^m$ be the solution of the MEC scheme \eqref{mass-energy} with a constant time and space mesh, for a random perturbation defined by
\eqref{tilde_Gamma}. Suppose that $\sup_{0\leq k\leq N}\tilde{\Gamma}(k,k)\leq \delta^2$ for some positive constant $\delta^2$. 
Let $\tau^*$ denote the random existence time of this scheme. 
Then for every time $t_M < \tau^*$ on the time grid, we have that for $\Delta x\in (0,1)$ 
\begin{align}
{\mathbb E} \big( H_{\rm dis}[u^M]\big)  \leq & \; H_{\rm dis}[u^0] + \frac{\epsilon \, M_{\rm dis}[u^0]}{\sqrt{2} 
  \, \big( \Delta t\big)^{\frac{3}{2}} }\,
\delta \, \sqrt { \ln(4\, L_c) + | \ln(\Delta x)| }\; t_M, 		
\label{H-hom-inst}
\\
{\mathbb E}\Big( \max_{0\leq m\leq M}  H_{\rm dis}[u^m] \Big) \leq &\;  H_{\rm dis}[u^0] + \frac{\sqrt{2} \, \epsilon\,  M_{\rm dis}[u^0]}{
\big( \Delta t\big)^{\frac{3}{2}} }\,
\delta \, \sqrt{ \ln(4\, L_c) + | \ln(\Delta x)|}\; t_M. 		
\label{H-hom-max} 
\end{align}
\end{proposition}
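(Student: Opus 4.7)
The plan is to follow the same overall strategy as Proposition \ref{max_H_multi}, the $\mathcal{Q}$-Brownian case, and track where the non-diagonal covariance structure forces a change. Multiplying the MEC scheme \eqref{mass-energy} by $-\Delta x\,(\bar u^{m+1}_j - \bar u^m_j)$, summing over $j=0,\ldots,N$ and $m=0,\ldots,M-1$, and using conservation of the discrete energy in the deterministic part (i.e., when $\epsilon=0$) produces an identity of the form
$$H_{\rm dis}[u^M] = H_{\rm dis}[u^0] + iR(M,N) + \frac{\epsilon\,\Delta x}{2} \sum_{m=0}^{M-1}\sum_{j=0}^N \bigl(|u^{m+1}_j|^2 - |u^m_j|^2\bigr) f^{m+\frac12}_j,$$
with $R(M,N)$ real-valued, exactly as in \eqref{upp_H_multi}. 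Taking real parts kills the $iR$ term, and writing $f^{m+\frac12}_j = \tfrac12(u^m_j+u^{m+1}_j)\tilde f^{\,m+\frac12}_j$ together with the conservation of mass $\sum_j |u^m_j|^2 \Delta x = M_{\rm dis}[u^0]$ (which holds for the MEC scheme) bounds the noise contribution by $\epsilon\,M_{\rm dis}[u^0] \sum_{m=0}^{M-1} \max_{0\leq j\leq N} |\tilde f^{\,m+\frac12}_j|$, up to a factor $\tfrac12$ or $1$ depending on whether we are treating the instantaneous or the maximal energy (as in Proposition \ref{max_H_multi}).

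The main new difficulty lies in bounding $\mathbb{E}\bigl[\max_{0\leq j\leq N}|\tilde f^{\,m+\frac12}_j|\bigr]$. In the $\mathcal{Q}$-Brownian setting, the components $\tilde f^{\,m+\frac12}_j$ were independent weighted standard Gaussians, and the sharp bound \eqref{E_max_G} exploited independence crucially via $\mathbb P(\max_k \leq t) = \prod_k \mathbb P(\cdot \leq t)$. Here, by construction $\tilde f^{\,m+\frac12} = (\Delta t)^{-1/2}\,\mathcal{A}\,\chi^{m+\frac12}$ with $\mathcal{A}\mathcal{A}^* = \tilde\Gamma$, so the $N+1$ components are jointly Gaussian but not independent, and each one has variance $\tilde\Gamma(j,j)/\Delta t \leq \delta^2/\Delta t$. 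The correct replacement tool is the classical maximum inequality for centered jointly Gaussian families: if $Y_0,\ldots,Y_N$ are centered Gaussian (possibly dependent) with $\operatorname{Var}(Y_k)\leq \sigma^2$, then
$$\mathbb{E}\Bigl[\max_{0\leq k\leq N}|Y_k|\Bigr] \leq \sigma\,\sqrt{2\,\ln\bigl(2(N+1)\bigr)}.$$
This inequality is proved by a simple union bound followed by the same Gaussian tail estimate already used in Proposition \ref{max_H_multi}: write $\mathbb{E}[\max_k|Y_k|] \leq c + \int_c^\infty \mathbb{P}(\max_k|Y_k|>t)\,dt \leq c + 2(N+1)\sigma\sqrt{2/\pi}\int_{c/\sigma}^\infty t^{-1} e^{-t^2/2}\,dt$ and optimize at $c = \sigma\sqrt{2\ln(2(N+1))}$.

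Applying this with $\sigma = \delta/\sqrt{\Delta t}$, and using that for a symmetric uniform mesh one has $N+1\sim 2L_c/\Delta x$, converts the logarithmic factor into $\sqrt{2\ln(4L_c/\Delta x)} = \sqrt{2\bigl[\ln(4L_c)+|\ln\Delta x|\bigr]}$ when $\Delta x\in(0,1)$. Summing over the $M = t_M/\Delta t$ time steps then produces the prefactor $t_M/(\Delta t)^{3/2}$ appearing in both \eqref{H-hom-inst} and \eqref{H-hom-max}. The factor $2$ discrepancy between the two bounds (the $1/\sqrt 2$ versus $\sqrt 2$ constants) comes from the same refinement used in Proposition \ref{max_H_multi}: for \eqref{H-hom-inst} one first isolates the martingale-type piece $\sum_{m,j}|u^m_j|^2 f^{m+\frac12}_j$, whose expectation vanishes (leaving only $\sum_{m,j}|u^{m+1}_j|^2 f^{m+\frac12}_j$ to bound), whereas for \eqref{H-hom-max} one bounds $|u^{m+1}_j|^2 - |u^m_j|^2$ by $|u^{m+1}_j|^2+|u^m_j|^2$ before taking expectations. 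The principal obstacle is therefore the replacement of \eqref{E_max_G}, whose proof relied on independence, by the logarithmic bound valid for dependent Gaussian families; once this is in place, the rest of the argument is a direct transcription of the proof of Proposition \ref{max_H_multi}.
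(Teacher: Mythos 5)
Your proposal is correct and its skeleton coincides with the paper's: reuse the energy identity \eqref{upp_H_multi_1} from Proposition \ref{max_H_multi}, reduce everything to a bound on ${\mathbb E}\big(\max_{0\leq k\leq N}|X^{m+\frac12}_k|\big)$ for a centered Gaussian vector with variances at most $\delta^2/\Delta t$, and sum over the $M=t_M/\Delta t$ time steps; you also correctly locate the source of the factor $2$ between \eqref{H-hom-inst} and \eqref{H-hom-max}. The one place where you genuinely diverge is the proof of the maximum inequality. You correctly observe that \eqref{E_max_G} is unusable because its proof factorizes $P(B_n\leq t)$ over independent coordinates, and you replace it with the bound $\sigma\sqrt{2\ln(2(N+1))}$ proved by a union bound, the Gaussian tail estimate, and optimization of the truncation level $c$. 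The paper instead derives the same inequality \eqref{Pisier} via Pisier's lemma: Jensen's inequality applied to $x\mapsto e^{\lambda x}$, the bound ${\mathbb E}(e^{\lambda|X_k|})\leq 2e^{\lambda^2\delta^2/2}$, and optimization over $\lambda$. Both arguments are independence-free and yield the same leading constant; the exponential-moment route gives exactly $\sigma\sqrt{2\ln(2(N+1))}$, whereas your truncation argument leaves a harmless additive remainder of order $\sigma/\ln(2(N+1))$ coming from the tail integral, so strictly speaking you obtain the stated constants only up to this lower-order term. Your approach has the advantage of running entirely parallel to the proof of \eqref{E_max_G}, making the comparison between the diagonal and homogeneous cases transparent; the paper's has the advantage of a cleaner closed-form constant and of extending verbatim to sub-Gaussian families.
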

 
\begin{proof}
We proceed as in the proof of Proposition \ref{max_H_multi}. Using \eqref{upp_H_multi_1}, we see that we need to find an  upper estimate of 
${\mathbb E} \big( \max_{0\leq k\leq N} |X_k^{m+\frac{1}{2}}|\big)$, where $\{ X^{m+\frac{1}{2}}_j\}_j$ is a centered Gaussian vector with a covariance matrix 
$\Gamma$. Note that for every $k=0,\dots, N$, $\mbox{\rm Var}(X^{m+\frac{1}{2}}_k) \leq \delta^2$, so that for every $\lambda >0$, 
$$
{\mathbb E}\Big( e^{\lambda \, |X^{m+\frac{1}{2}}_k|}\Big)  
= \frac{2}{\delta \sqrt{2\pi}} \int_0^\infty e^{-\frac{x^2}{2\delta^2} + \lambda x}\, dx \leq 2  e^{\frac{\lambda^2  \delta^2}{2}}.
$$
The next argument is a slight extension of the one used in the proof of \cite[Prop 3.2]{MRY2020}, where the random variables $X^{m+\frac{1}{2}}_j$ were standard Gaussians; it is based on the Pisier lemma (see, e.g., \cite[Lemma 10.1]{Lif2012}).
We have
\begin{equation}\label{Pisier}
{\mathbb E}\Big( \max_{0\leq j\leq N} \big|X^{m+\frac{1}{2}}_j\big|\Big) \leq \delta\; \sqrt{2 \ln\big[ 2\, (N+1)\big]}.
\end{equation}
We include its short proof for completeness. For any $\lambda >0$, using the Jensen inequality and the fact that $x\mapsto e^{\lambda x}$ is increasing, we obtain
\begin{align*}
\exp\Big( \lambda   {\mathbb E}\big[ \max_{0\leq k\leq N} |X^{m+\frac{1}{2}}_k|\big]\Big) \leq &\; {\mathbb E} \Big( \exp\Big[ \lambda \max_{0\leq k\leq N}
\big|X^{m+\frac{1}{2}}_k\big| \Big] \Big) \leq {\mathbb E} \Big( \max_{0\leq k\leq N} \exp\big( \lambda |X^{m+\frac{1}{2}}_k|\big) \Big) \\
\leq & \; \sum_{k=0}^N {\mathbb E} \Big(e^{\lambda \big| X^{m+\frac{1}{2}}_k\big|}\Big) \leq 2\, (N+1) e^{\frac{\lambda^2 \delta^2}{2}}.
\end{align*} 
Taking logarithms, we deduce
$$
{\mathbb E}\Big(  \max_{0\leq k\leq N} |X^{m+\frac{1}{2}}_k| \Big) \leq  \frac{1}{\lambda} \ln\Big(2\,  (N+1) e^{\frac{\lambda^2\, \delta^2}{2}}\Big) 
= \frac{\ln\big[ 2\, (N+1)\big] }{\lambda}  + \frac{\lambda\, \delta^2}{2}, 
$$
for every $\lambda >0$.  Choosing $\lambda = \frac{\sqrt{2 \ln [ 2\, (N+1) ]}}{\delta}$ for $N\geq 1$, concludes the proof of \eqref{Pisier}. 
 
Therefore, as in the proof of Proposition \ref{max_H_multi}, we have
$$
{\mathbb E} \big( H_{\rm dis}[u^M]\big) \leq H_{\rm dis}[u^0] + \sum_{m=0}^{M-1} \frac{\epsilon}{2} M_{\rm dis}[u^0] 
\frac{1}{\sqrt{\Delta t} \sqrt{\Delta x}} \delta\; \sqrt{2 \ln\big[ 2\, (N+1)\big]}.
$$ 
This concludes the proof of \eqref{H-hom-inst}. The inequality \eqref{H-hom-max} is obtained in a similar manner.
\end{proof} 
 
We next compute the upper bounds of the average discrete energy in the two examples of homogeneous noise.
 
\underline{Example 3:} The definition of $\tilde{\Gamma}^{(3)}_\beta$ for a constant space mesh $\Delta x$ implies that 
$\tilde{\Gamma}^{(3)}_\beta(k,k)= \big( \Delta x\big)^{-\beta}$ 
for $k=0,\dots, N$. Therefore, the assumptions of Proposition \ref{prop-Hdis-hom} are satisfied with $\delta^{(3)}_\beta = \big( \Delta x\big)^{- \frac{\beta}{2}}$, 
 and for $\Delta x \in (0,1)$ the estimate \eqref{H-hom-inst} becomes 
$$ 
{\mathbb E} \big( H_{\rm dis}[u^M]\big)  \leq  \; H_{\rm dis}[u^0] + \frac{\epsilon M_{\rm dis}[u^0]}{\sqrt 2 \, \big( \Delta t\big)^{\frac{3}{2}} 
\big(\Delta x)^{\frac{\beta}{2}}}\,\sqrt{ \ln( 2\, L_c) + | \ln(\Delta x)|}\; t_M.
$$ 	
 
\underline{Example 4:} The definition of  $\tilde{\Gamma}^{(4)}_\beta$ for a constant space mesh $\Delta x$  implies that for $k=0,\dots, N$
$$
\tilde{\Gamma}^{(4)}_\beta(k,k) = 
\frac{1}{(\Delta x)^2}\, \left[ \frac{2\, \Delta x}{\beta}  - \frac{2}{\beta^2} \big( 1-e^{-\beta \, \Delta x} 
\big) \right], 
$$
which, from Taylor expansion, is lower and upper bounded as 
$$ 
1-\frac{\beta \Delta x}{3}  \leq \tilde{\Gamma}^{(4)}_\beta (k,k) \leq  1-\frac{\beta \Delta x}{3} + \frac{ (\beta\, \Delta x)^2}{12} \leq 1. 
$$ 
Therefore,  the assumptions of  Proposition \ref{prop-Hdis-hom} are satisfied with
$\delta^{(4)}_\beta = \Big( 1-\frac{\beta \Delta x}{3}   + \frac{ (\beta\, \Delta x)^2}{12}  \Big)^{\frac{1}{2}} $,
and the upper estimate \eqref{H-hom-inst} becomes
\begin{align*}
{\mathbb E} \big( H_{\rm dis}[u^M]\big) & \leq  \; H_{\rm dis}[u^0] + \frac{\epsilon M_{\rm dis}[u^0]}{2   \, \big( \Delta t\big)^{\frac{3}{2}} } \, 
\Big( 1-\frac{\beta \Delta x}{3}  + \frac{ (\beta\, \Delta x)^2}{12} \Big)^{\frac{1}{2}}  \, \sqrt{2 \big[ \ln(2\, L_c) + | \ln(\Delta x)| \big]} \, t_M\\
&\leq \;  H_{\rm dis}[u^0] + \frac{\epsilon M_{\rm dis}[u^0]}{\sqrt 2   \, \big( \Delta t\big)^{\frac{3}{2}} } \, 
\sqrt{ \ln(2\, L_c) + | \ln(\Delta x)| } \, t_M. 
\end{align*}

\subsection{Numerical tracking of discrete energy}
To check the accuracy of our three schemes for this homogeneous type of noise, we show  the error $\mathcal{E}^m[M]$ defined in  \eqref{E:M-error}
in the computation of the discrete mass in the left subplot of Figure \ref{F:scheme3}, and the growth of the discrete energy  in the middle and right subplots of Figure \ref{F:scheme3}. 
There we consider the $L^2$-critical case and take $u_0=0.9Q$ as the initial condition with the noise from Example 3 (Riesz kernel) with $\beta=0.5$
and noise strength $\epsilon = 0.5$. Our other computational parameters are the same as in Figure \ref{F:scheme Ex2}: $L_c=20, \Delta x=0.1, \Delta t = 0.01$.
One can see that the error in mass is similar to the Example 2 in Figure \ref{F:scheme Ex2},  as well as the average energy (both instantaneous and maximal
as defined by  the left-hand sides of \eqref{H-hom-inst} and \eqref{H-hom-max}, respectively) grow and level off similarly. The LE scheme is  slightly 
underperforming  (probably due to slower catching up, since there is no nonlinear correction used in the LE scheme). Changing various parameters, 
we find similar behavior in accuracy, concluding that for all types of  driving noises considered, our numerical simulations are sufficiently accurate. 

\begin{figure}[ht]
\includegraphics[width=0.32\textwidth]{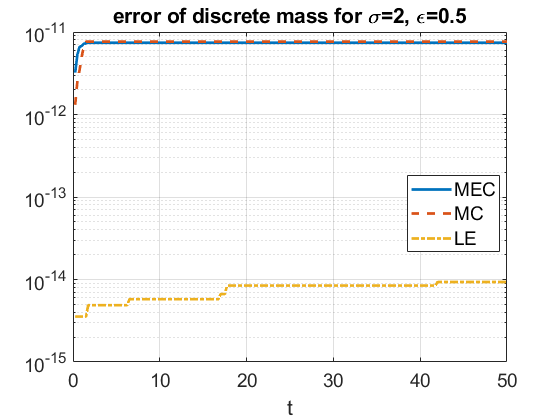} 
\includegraphics[width=0.32\textwidth]{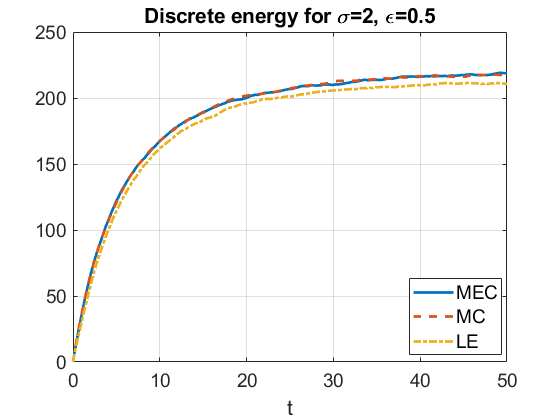} 
\includegraphics[width=0.32\textwidth]{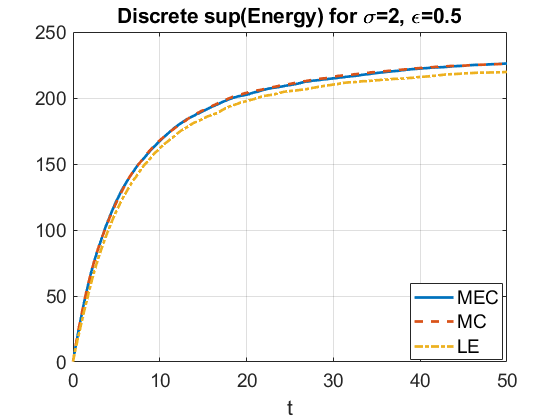}  
\caption{Accuracy of three schemes with noise in Example 3 (Riesz kernel). The $L^2$-critical case ($\sigma =2$) with  $u_0=0.9Q$, $\beta=0.5$, 
$\epsilon=0.5$, $L_c=20$, $\Delta x=0.1$ and $\Delta t=0.01$.
The left plot is the error $\mathcal{E}^m[M]$ \eqref{E:M-error} in computation of the discrete mass for three schemes. 
The growth of average (instantaneous) energy (middle) and max energy (right) from different numerical schemes.} 
\label{F:scheme3}
\end{figure} 

\begin{figure}[ht]
\includegraphics[width=0.32\textwidth]{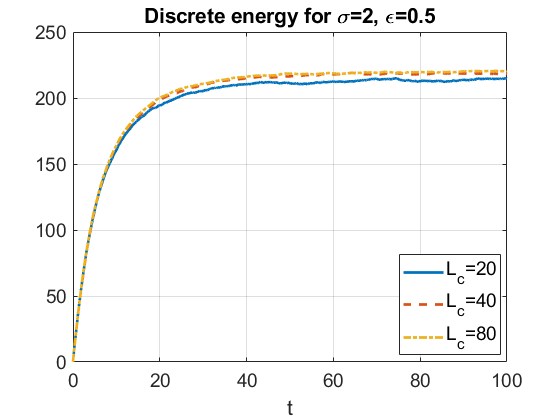}
\includegraphics[width=0.32\textwidth]{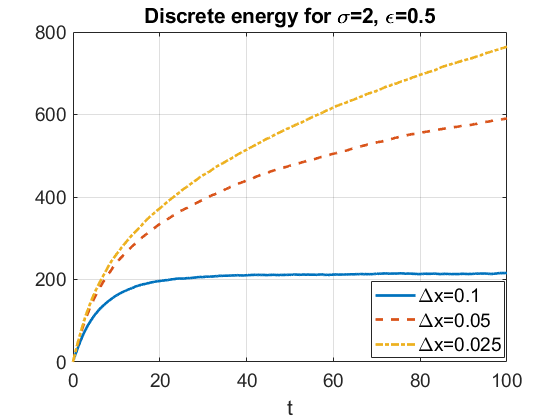} 
\includegraphics[width=0.32\textwidth]{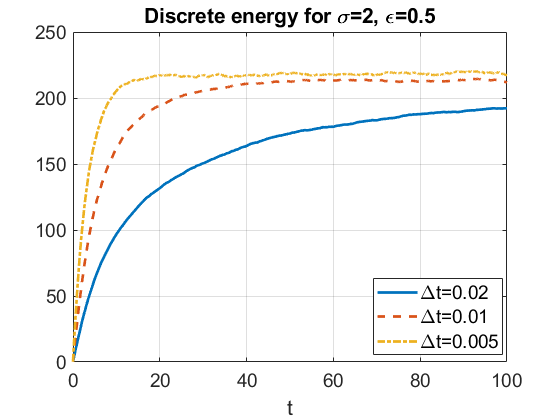}  
\caption{Time evolution of energy and its dependence on parameters $L_c$ (left), $\Delta x$ (middle) and $\Delta t$ (right) in Example 3 (Riesz kernel) 
in the $L^2$-critical case with $n=2$, $\beta=0.5$ and $\epsilon=0.5$.}
\label{F:dx dt Lc Ex3}
\end{figure}  

We next study how the energy is affected by the spatially homogeneous noise from Examples 3 and 4. First, we note that the discrete energy does not depend 
on the length of the computational domain $L_c$, see left subplots in Figures \ref{F:dx dt Lc Ex3} and  \ref{F:dx dt Lc Ex4}. 
In Example 3 (with the Riesz kernel singular at the origin), we observe a clear dependence on the spatial mesh size $\Delta x$: the smaller the size, 
the faster the growth of the energy is; see the middle subplot in Figure \ref{F:dx dt Lc Ex3}. In Example 4 (with a more regular kernel having an exponential decay) there is almost no dependence on $\Delta x$; see middle subplot in Figure \ref{F:dx dt Lc Ex4}. 
The right subplots in Figures \ref{F:dx dt Lc Ex3} and  \ref{F:dx dt Lc Ex4} show the dependence on the time step size $\Delta t$: in Figure \ref{F:dx dt Lc Ex3} 
(Riesz kernel) it has some influence on how fast the energy grows initially, however, eventually it starts leveling off and approaching a horizontal asymptote; 
in Figure \ref{F:dx dt Lc Ex4} (Example 4), where the kernel has an exponential decay,  it takes a longer time to reach the horizontal asymptote, especially for larger 
time steps (e.g. for $\Delta t=0.02$ in the right subplot in Figure \ref{F:dx dt Lc Ex4}). 
In these computations we tracked the energy on the time interval $(0,100)$ for 
comparison purposes, it is possible to obtain longer time tracking (see, for example, Figure \ref{F:Ex4 extend}, however, it does take longer computational time to track the energy growth, since it requires at least 100 trials to run in each particular value of a parameter to approximate the expected values).

\begin{figure}[ht]
\includegraphics[width=0.33\textwidth]{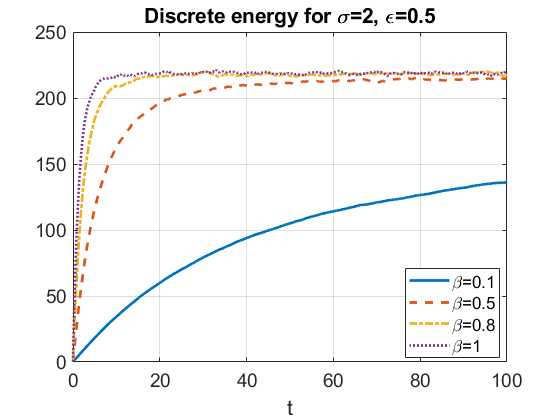} 
\includegraphics[width=0.33\textwidth]{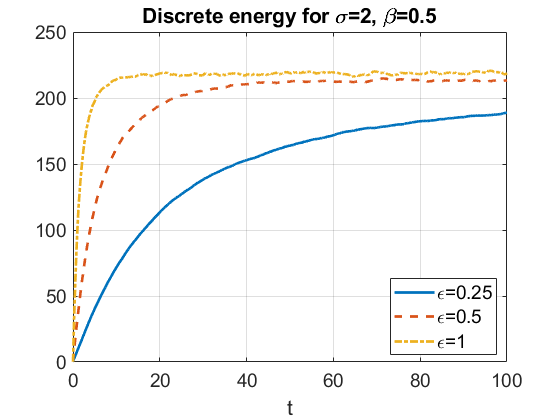}  
\includegraphics[width=0.33\textwidth]{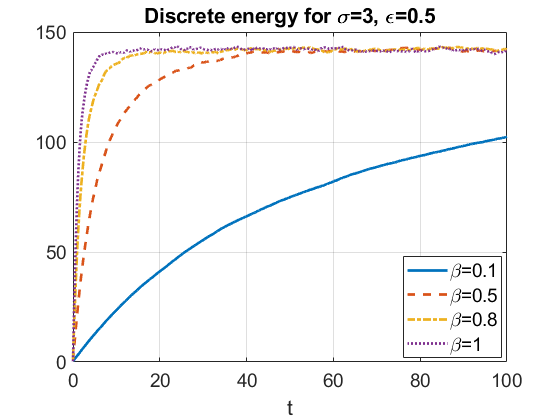} 
\includegraphics[width=0.33\textwidth]{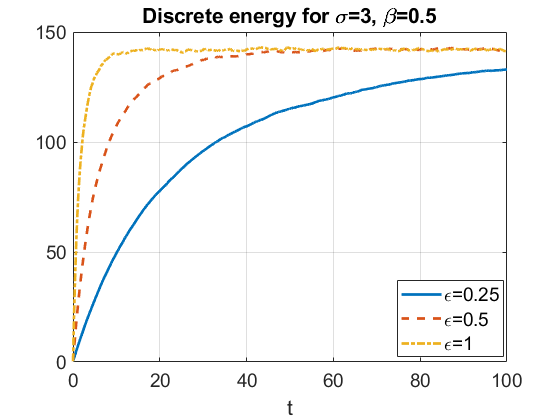}
\caption{The growth of energy for different values of $\beta$ (left) and $\epsilon$ (right) in Example 3 (Riesz kernel) with  $L_c=20$, $\Delta x=0.1$ and $\Delta t=0.01$. Comparison is given in both $L^2$-critical and supercritical cases
for the same $\epsilon=0.5$ on the left two plots, and for the same $\beta=0.5$ on the right two plots.}
\label{F:beta eps Ex3}
\end{figure} 
\begin{figure}[ht]
\includegraphics[width=0.32\textwidth]{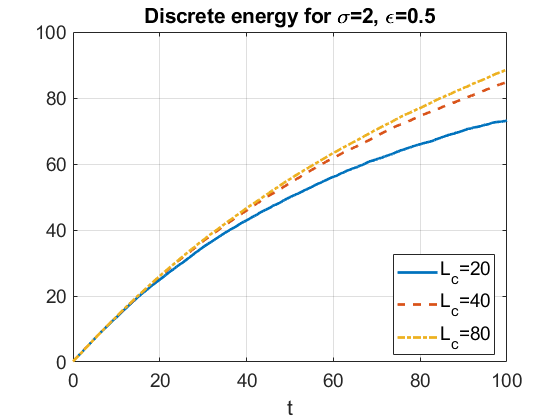}
\includegraphics[width=0.32\textwidth]{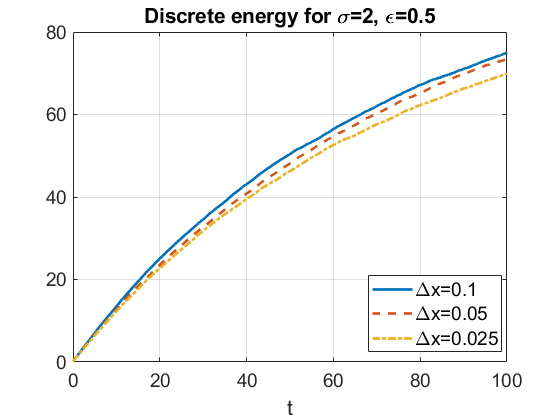} 
\includegraphics[width=0.32\textwidth]{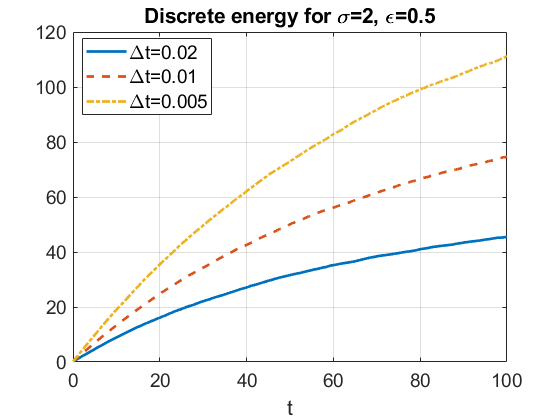}  
\caption{Time evolution of energy and its dependence on parameters $L_c$ (left), $\Delta x$ (middle) and $\Delta t$ (right) in Example 4 (exponential kernel)
in the $L^2$-critical case with $n=2$, $\beta=0.5$ and $\epsilon=0.5$.}
\label{F:dx dt Lc Ex4}
\end{figure}

We next track the influence of the noise strength $\epsilon$ and the correlation parameter $\beta$ on the energy growth in Examples 3 and 4. 
Figures  \ref{F:beta eps Ex3} show that the energy first increases, and then reaches the horizontal asymptote. The leveling off is clearly seen in Figure 
\ref{F:beta eps Ex3}, Example 3 (Riesz kernel). It does not seem to behave similarly in Example 4,
see Figure \ref{F:beta eps Ex4}, however, if we track the energy for longer times, for example, to time $t=200$ -- see Figure \ref{F:Ex4 extend}, 
the energy starts leveling off. Note that in this Example 4, the parameter $\beta$ can have values beyond $1$. Larger values of $\beta$ represent more irregular noise,
 and we note that in that case 
the energy approaches the horizontal asymptote faster; see Figure \ref{F:Ex4 extend}.

\begin{figure}[ht]
\includegraphics[width=0.33\textwidth]{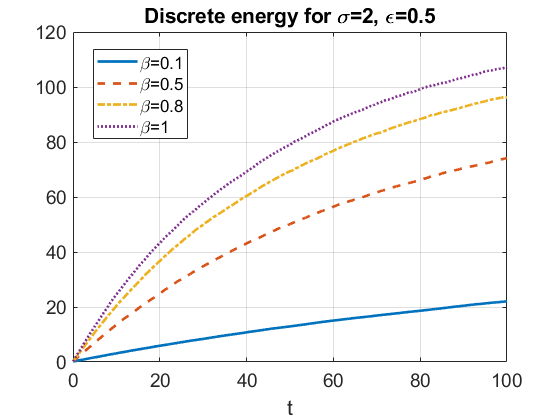} 
\includegraphics[width=0.33\textwidth]{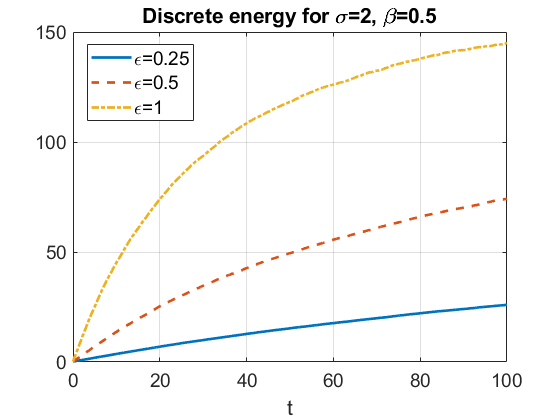} 
\includegraphics[width=0.33\textwidth]{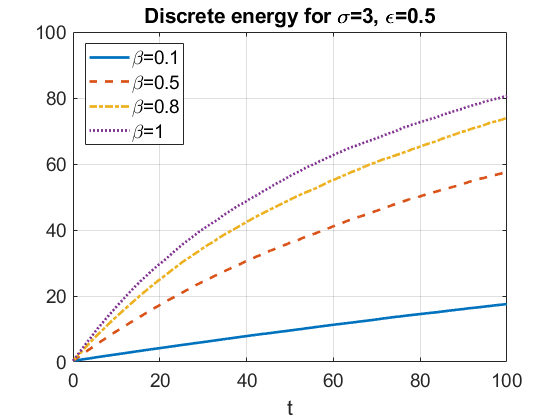} 
\includegraphics[width=0.33\textwidth]{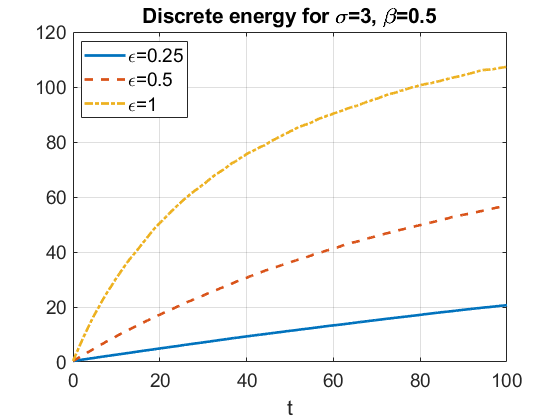}
\caption{The growth of energy for different values of $\beta$ (left) 
and $\epsilon$ (right) in Example 4 (exponential kernel) with  $L_c=20$, $\Delta x=0.1$ and $\Delta t=0.01$. Comparison is given in both $L^2$-critical and supercritical cases for the same $\epsilon=0.5$ on the left two plots, and for the same $\beta=0.5$ on the right two plots.}
\label{F:beta eps Ex4}
\end{figure} 
\begin{figure}[ht]
\includegraphics[width=0.40\textwidth]{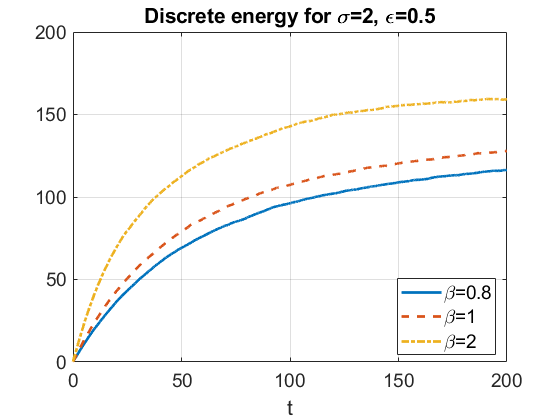} 
\caption{The growth of energy for different values of $\beta$ in Example 4 (exponential kernel) on a longer time interval, $0<t<200$.}
\label{F:Ex4 extend}
\end{figure}

To summarize, the stronger or more irregular the noise is (that is, the larger $\epsilon$ or $\beta$ is), the faster the convergence to the horizontal asymptote becomes.  
We also observe that regardless of the noise strength, the values of the discrete energy converge to the same horizontal asymptote. 

We point out that different types of noise in the limiting cases 
as $\beta = 1$ correspond to  the space-time white noise   (or a finite approximation of it) in Examples 1, 2, and 3 (but not 4), and note that in Figures \ref{Fig:beta eps Ex1}, \ref{F:beta eps Ex2}  and  \ref{F:beta eps Ex3} the energy curve levels at the value produced in the limiting case of $\beta=1$.

\section{Influence of noise on global behavior: blow-up probability}\label{Numerical results}
In this section, we investigate how a multiplicative perturbation  driven by a noise $W$ correlated in space and white in time (via our four examples) affects
the solutions' global behavior: whether it arrests the blow-up so that the solution exists on much longer time intervals, or instead, whether it ceases the global behavior and drives the evolution towards a blow-up. 

\subsection{Comments about mesh-refinement}
When checking a sufficiently long evolution of solutions considered in the previous two sections, a uniform space mesh was sufficient for the numerical simulations.  
However, when studying the blowup/scattering thresholds, one needs to be careful, since the uniform mesh may lead to an oscillatory solution at some amplitude, 
for which the true  solution actually would blow up. Such oscillations might be due to the effect of the 4th order derivative residue term from $u_{xxxx}$ 
when we approximate the discretization of $\partial_{xx}$ by Taylor expansion. 
(For more details on this we refer the reader to \cite{F2015}, \cite{DM2002a}, \cite{DM2002b}.)   
In order to avoid this issue, a mesh-refinement can be implemented (for example, as in   \cite{DM2002a}  or \cite{MRY2020}) to let the solution evolve in time more accurately in numerical simulations. 

As we refine the mesh, we face the recalculation of the covariance matrix. Furthermore, in Examples 3 and 4 (homogeneous noise), a convolution is involved 
leading to computation of a full covariance matrix and its Cholesky decomposition. This part consumes significant time, and  each mesh refinement, 
for example as we did in \cite{MRY2020}, would involve an extra recalculation of the covariance matrix, making the computational time prohibitive to obtain any 
useful results.  

Instead, we have a more efficient approach: instead of using a non-uniform mesh refinement, we start 
by setting {\it a priori} the central region to be refined enough to reach a height identified as blowup (for example, 5 times that of the initial data:
 $\|u(t)\|_{L^\infty}=5\|u_0\|_{L^\infty}$). 
Outside of the central region, we keep the previously used space mesh. 
Thus, by refining specific regions in our computational domain from the beginning,    the mesh refinement is no longer needed later in the computations. 
Hence, the Cholesky decomposition for the  
covariance matrix used in Examples 3 and 4 is done only once in the beginning, saving a large quantity of computational time. 
We use the computational interval with $L_c=10$ and set the initial space mesh as follows: we choose the central region to be $[-1,1]$ and set 
$\Delta x=0.1$ outside of it; inside the central region, that is,  for  $x \in [-1,1]$, we set $\Delta x=0.1/16$. The time mesh  we use is $\Delta t=0.05$. 
In this Section in our simulations, a solution is identified 
as {\it blow-up} if $\|u(t,\cdot)\|_{L^\infty} > 5\|u_0\|_{L^\infty}$, 
and a solution is identified as {\it scattering} if the time evolution did not blow up before the time $t=5$.

\subsection{Probability of blow-up}
We are now able to investigate the noise influence on the overall behavior of solutions to \eqref{E:NLS}. Computationally, the results of this part are the most 
challenging and time consuming. Computing the covariance matrices and the correlated noise via $\mathcal{A}\chi$ in Examples 3 and 4 takes a significant
 amount of computational time, as these quantities are full matrices, which need $O(N^2)$ operations. For other operations, such as matrix multiplication 
 when solving linear or non-linear systems in  \eqref{mass-energy}, \eqref{NS: crank-nicholson} or \eqref{NS:relaxation} schemes, we only need $O(N)$ 
 operations as they are sparse systems. In Examples 3 and 4, we use Nvidia RTX 2070 SUPER and Nvidia GTX1080ti GPUs to compute the covariance matrices, 
 since GPUs are much faster in matrix addition and multiplications. Nevertheless, currently it takes approximately $10$ hours to generate, for example, the right subplot 
 in Figure \ref{Ex3 blow-up percent} on one of our $18$ core Intel i9-7980xe workstations (when using one of the latest versions of Matlab 
with parallel computing command ``parfor"). Furthermore, we also used the HPC\footnote{High Performance Computing (HPC) 
resources at Florida International University.} to perform computations that we show in Figures \ref{Ex1 blow-up percent}-\ref{Ex4 blow-up percent}. 

For each of the four examples of spatially correlated noise considered, we track the  time evolution of solutions with initial data just slightly 
above the ground state $Q$.  In particular, in the $L^2$-critical case ($\sigma=2$) we take  
initial condition $u_0=1.05Q$, and in the $L^2$-supercritical case (with $\sigma =3$) we consider $u_0=1.01Q$. 
In the deterministic setting both of these initial conditions lead to a solution blowing up in finite time (and small perturbations of such data also lead to a blow-up in finite time with a similar dynamics). In the stochastic setting any data (even small) can blow-up with a positive probability (see \cite{dBD2005}), therefore, to track how the probability of blow-up changes, we have to take the initial conditions very close to $Q$.  

Recall that $\epsilon$ is the strength of the noise, which is an important parameter to track for understanding how the noise influences  the global behavior. 
We also investigate how the correlation parameter $\beta$ influences the blow-up (recall that in the first three examples,  $\beta \to 1$ means that the noise $W$ we use approaches the space-time  white noise).
We take $\beta \in [0,1]$ and subdivide this interval into 100 sub intervals
(that is,  we compute the time evolution of solutions with an increment of $0.01$  in  $\beta$).

We track the probability of blow-up as follows: 
in the $L^2$-critical case ($\sigma=2$) we average over 1000 
trials and in the $L^2$-supercritical case (we work with $\sigma=3$) we average over 3000 trials in order  to obtain a  smoother curve of probabilities;    
indeed, we notice that  the probability of blow-up turns out to be higher, and the random blow-up time  varies more. As we mentioned, we mark a numerical run as a 
blow-up solution  if the amplitude becomes higher than 5 times of the original amplitude, and we record a run as scattering if the time evolution did not blow-up 
within the  considered time interval $0<t\leq 5$. We typically consider values of the noise strength $\epsilon = 0.05, 0.1, 0.2$ and $0.5$,
 though in some instances we had to refine it (for example, in Figure \ref{Ex1 blow-up percent} to pin down the interval which affects the blow-up percentage. 
 (The values of the parameters  $L_c$, $\Delta t$ and $\Delta x$ are as described in the previous subsection.)

In Figures \ref{Ex1 blow-up percent} and \ref{Ex2 blow-up percent} we 
show the probability of blow-up for Example 1 (Gaussian-type decay) and Example 2 (polynomial decay). First, observe that as $\beta$ increases to 1, 
the probability of blow-up diminishes; it decreases more significantly in the $L^2$-critical case than in the $L^2$-supercritical case. In fact, in the $L^2$-critical case in Example 1 the noise strength with $\epsilon > 0.09$ seem to eliminate the blow-up completely as $\beta \to 1$, similar dependence is seen in Example 2. 

\begin{figure}[ht]
\includegraphics[width=0.48\textwidth]{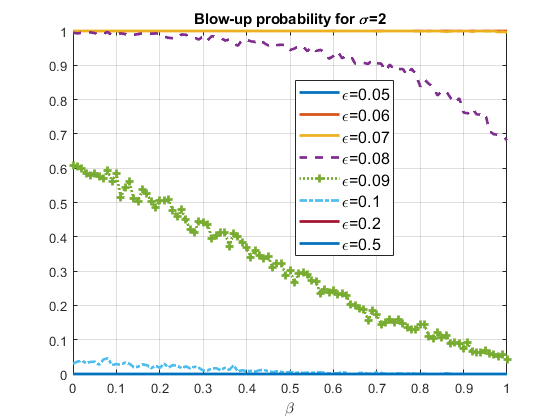}
\includegraphics[width=0.48\textwidth]{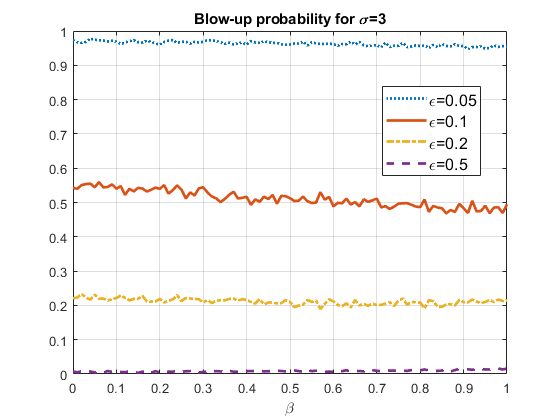} 
\caption{Blow-up probability for different noise strength $\epsilon$ and space correlation $\beta$ in Example 1 (Gaussian-type decay). 
Left: $L^2$-critical ($\sigma=2$) case. Right: $L^2$-supercritical ($\sigma =3$) case.}
\label{Ex1 blow-up percent}
\end{figure} 
\begin{figure}[ht]
\includegraphics[width=0.48\textwidth]{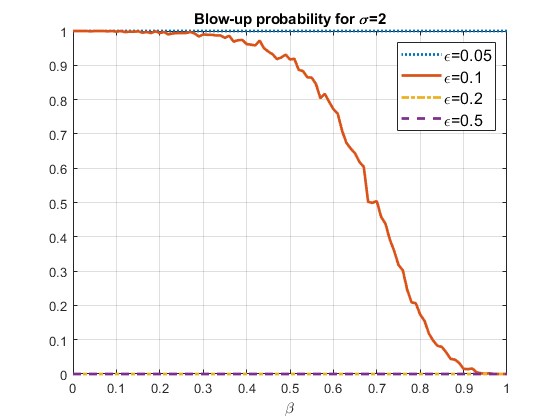}  
\includegraphics[width=0.48\textwidth]{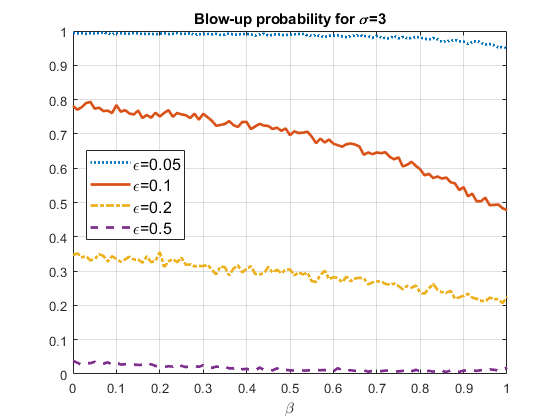} 
\caption{Blow-up probability for different $\epsilon$ and space correlation $\beta$ in Example 2 (polynomial decay)  with $n=4$. 
Left: $L^2$-critical ($\sigma=2$) case. Right: $L^2$-supercritical ($\sigma =3$) case.}
\label{Ex2 blow-up percent}
\end{figure} 

The larger noise strength $\epsilon$ tends to drive solutions away from blow-up into a scattering regime (for example, $\epsilon \geq 0.1$ in
 Figure \ref{Ex2 blow-up percent}), while very small values of $\epsilon$ let the time evolution keep the blow-up behavior (at least on the considered time interval), 
 for example, see curves for $\epsilon =0.05$ in both Figures \ref{Ex1 blow-up percent} and \ref{Ex2 blow-up percent} -- they are more easily identified in 
 the right subplots in the $L^2$-supercritical case, on the very top of the plot.

We next examine the global behavior in Example 3 of a homogeneous noise defined in terms of the Riesz kernel. 
The probability of blow-up is given in Figure \ref{Ex3 blow-up percent}. 
Note that, as we have a stronger spatial correlation when using  this Riesz kernel, the stronger noise tend to arrest blow-up with higher probability and 
force into the scattering  regime. For example, we see no blow-up behavior in solutions in the $L^2$-critical equation with $\epsilon=0.5$ starting with 
$\beta \geq 0.15$; see left plot in Figure \ref{Ex3 blow-up percent}.

\begin{figure}[ht]
\includegraphics[width=0.48\textwidth]{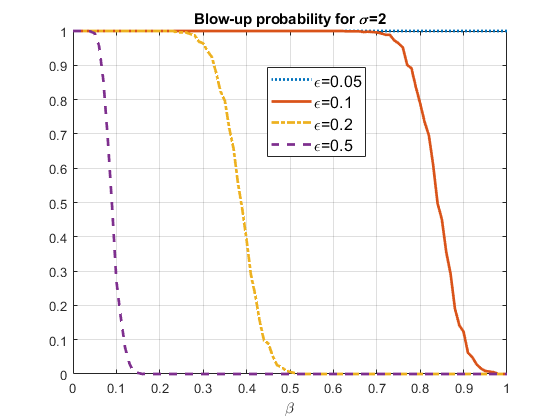}
\includegraphics[width=0.48\textwidth]{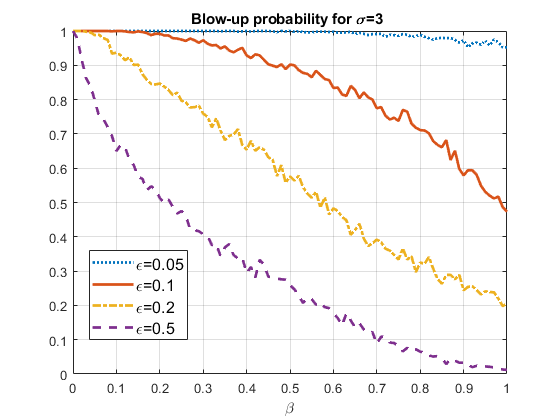}  
\caption{Blow-up probability for different $\epsilon$ and space correlation $\beta$ in Example 3 (Riesz kernel). Left: $L^2$-critical ($\sigma=2$) case. 
Right: $L^2$-supercritical ($\sigma =3$) case.}
\label{Ex3 blow-up percent}
\end{figure} 

\begin{figure}[ht]
\includegraphics[width=0.48\textwidth]{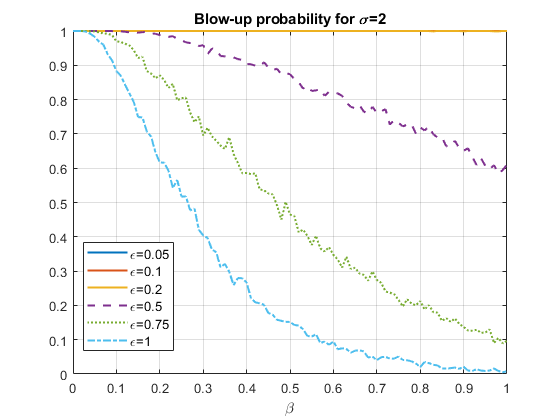}
\includegraphics[width=0.48\textwidth]{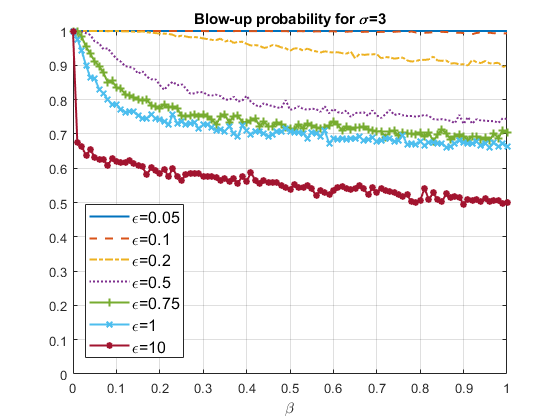}  
\caption{Blow-up probability for different $\epsilon$ and space correlation $\beta$ in Example 4 (exponential decay). Left: $L^2$-critical ($\sigma=2$) case. Right: $L^2$-supercritical ($\sigma =3$) case.} 
\label{Ex4 blow-up percent}
\end{figure}


Finally, we show the probability of blow-up in Example 4 (exponential kernel, $e^{-\beta \, |x|}$) in Figure \ref{Ex4 blow-up percent}. 
On the left it is the  $L^2$-critical case  and on the right it is the $L^2$-supercritical case ($\sigma=3$).
As with the Riesz kernel, we observe that a larger noise strength (such as $\epsilon = 0.5$) tends to arrest blow-up in an  increasing number 
of cases as $\beta$ increases (note that $\beta$ can go beyond $1$ in this example), and it can almost eliminate blow-up, at least in the $L^2$-critical case
 (see $\epsilon=1$ curve on the left subplot of Figure \ref{Ex4 blow-up percent}).  
Note that in the $L^2$-supercritical case in the Example 4, while the probability of scattering slightly increases with growing $\beta$ and with increasing $\epsilon$, 
the blow-up probability curve is not affected as dramatically as in the $L^2$-critical case. We were able to track the values of $\epsilon$ higher than $1$: an example of $\epsilon=10$ is shown on the right subplot of Figure \ref{Ex4 blow-up percent}. (In this case we did $N_t=2000$ trials; a visible initial jump is because for computational purposes when $\beta=0$, we take $\epsilon=0$.)
Thus, the blow-up probability curves in Figure \ref{Ex4 blow-up percent} (also in Figures \ref{Ex1 blow-up percent} and \ref{Ex2 blow-up percent}) are different from the $L^2$-critical case, which corroborates the result in \cite{dBD2005} that in the $L^2$-supercritical case {\it any} sufficiently smooth and sufficiently localized data blows-up in finite time with positive probability. 
We also observe that the stronger the  nonlinearity is ($\sigma=3$ vs. $\sigma=2$) the more resistance to scattering time evolution has. 

Summarizing, in all our examples of spatially-correlated noise, we observe that a larger noise strength $\epsilon$ and 
more concentrated  space-correlation (higher value of $\beta$)
help prevent or delay the blow-up, hence, forcing solutions to exist for longer time. We emphasize that the above 
simulations were done with the initial data that leads to blow-up in finite time in the deterministic case ($\epsilon=0$). 
\smallskip
 
Finally, we want to make a remark about a reverse phenomenon, i.e., when initial data, which in the deterministic case generate solutions existing globally in time 
(moreover, scattering), can in the stochastic case produce a time evolution, which blows up in finite, though random, time with positive probability. 
For example, in Example 3 (Riesz kernel) when taking $u_0=0.99Q$, $\epsilon=0.2$, and considering the $L^2$-supercritical case $\sigma=3$, we observed  1 or 2 blow-up trajectories out of 1000 runs for the values of $\beta=0.25, 0.5$, or $0.75$. While the probability is very low, it is positive; this is consistent with results proved in \cite[Thm 5.1]{dBD2005}.
We ran a similar experiment in the $L^2$-critical case and did not observe any blow-up trajectories in 2000 runs for a variety of values of $\beta$; this is consistent
 with \cite[Thm 2.7]{MR2020}. 
We conclude this section with mentioning that a similar positive probability of blow-up in finite time we observed in the case of space-time white noise in
\cite[end of Section 5]{MRY2020}.

\section{Effect of the noise on blow-up dynamics} \label{blow-up dynamics}

In this section we show how the finite-time blow-up dynamics (rates and profiles) might be affected by the spatially-correlated driving noise. 
To track the blow-up behavior we use the algorithm we introduced in \cite[Section 4]{MRY2020}. Note that the blow-up time is a random time $T=T(\omega)$.  To avoid reaching the blow-up time  $T(\omega)$, we use the non-uniform mesh in time, i.e., 
$\displaystyle \Delta t_m=\frac{\Delta t_0}{\|u(t_m,x)\|_{L^\infty}^{2\sigma}},$ 
where $t_0=0$ and $t_m=\sum_{l=0}^{m-1} \Delta t_l$ is the $m$th time step. 

We also use the spatial mesh-refinement. 
Unlike the previous section, where we can {\it a priori} preset the non-uniform mesh,
here, we need to keep refining the mesh as time evolves. Therefore, the interpolation for the  solution on the new grid points is needed. 
We apply our new mesh-refinement strategy with the mass-conservative interpolation\footnote{The new part in this interpolation is that the mass is preserved before and after the refinement of a spatial interval, see \cite[(4.8) and Figure 14]{MRY2020}.}, introduced in \cite[Section 4]{MRY2020} for the value of $u(t_m, \tilde{x}_j)$ at the new grid points $\tilde{x}_j$ at time $t_m$. This, however, results in the covariance matrices being recomputed and updated at each mesh refinement, slowing down the computations. 
We first check the accuracy of our approach by computing the difference of the mass at different times (for both discrete as in \eqref{E:M} and its approximation 
via the composite trapezoid rule, see (4.10) and (4.11) in \cite{MRY2020} for the definition) in the case of blow-up solutions; see Figure \ref{DM error blow-up}. We observe that our schemes preserve mass very accurately (at least $10^{-11}$ or more precisely) during the blow-up evolution. 
\begin{figure}[ht]
\includegraphics[width=0.45\textwidth]{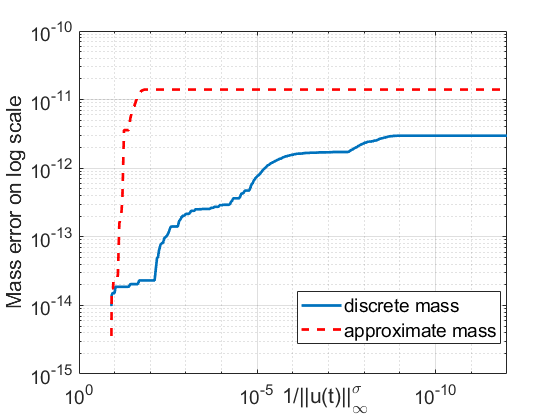}
\caption{Error in mass computation of a blow-up trajectory with $u_0=3 e^{-x^2}$ in Example 3 (Riesz kernel) with $\beta=0.5$ in the $L^2$-supercritical case 
($\sigma =3$).}  
\label{DM error blow-up}
\end{figure} 

A word of caution should be made about the refinements. It was already noted in \cite{DM2002b} that a refinement can affect the outcome of simulations of the global behavior quite severely (e.g., coarser mesh grids can allow the singularity to form, and the finer mesh grids can prevent or delay the blow-up), this also depends on the type of refinement used (in \cite{DM2002b} it was a classical linear interpolation, which does not preserve mass). In our implementation of mesh-refinement, we use a mass-conservative approach, which can be viewed as a next step in investigating blow-up in the stochastic setting. We show that it is sufficiently accurate and robust method to obtain information about the blow-up dynamics. It will be important to investigate further the formation of blow-up and the blow-up dynamics, and in particular, influence of the refinement onto the blow-up; in this work we initiate the study of blow-up dynamics for the white in time colored in space multiplicative stochastic perturbations, with some examples approaching the space-time white noise. 

In what follows we track blow-up dynamics in both $L^2$-critical and $L^2$-supercritical cases. In these simulations we average over $100$ runs. 
To make sure that the time evolution leads to a blow up behavior, we choose the initial condition $u_0 = A \, e^{-x^2}$ with $A \geq 3$ (in both critical and supercritical cases). We set $\Delta t_0=0.002$ and the initial uniform grid mesh size $\Delta x=0.05$ on $x \in [-L_c,L_c]$ with $L_c=5$ (as the blow-up is a local phenomenon, 
a larger value of $L_c$ is unnecessary). We stop our simulations when $\|u\|_{L^\infty}^{\sigma}$ reaches $10^{10}$, or equivalently, $L(t) \sim 10^{-10}$. 
For a review on blow-up dynamics, we refer the reader to \cite[Introduction]{MRY2020}, \cite{YRZ2018} (for the $L^2$-critical case), \cite{YRZ2019} 
(for the $L^2$-supercritical case), or see monographs \cite{SS1999}, \cite{F2015} and references therein.

\begin{figure}[ht]
\includegraphics[width=0.32\textwidth]{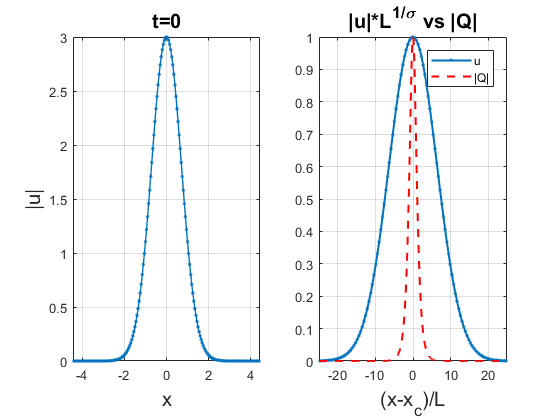} 
\includegraphics[width=0.32\textwidth]{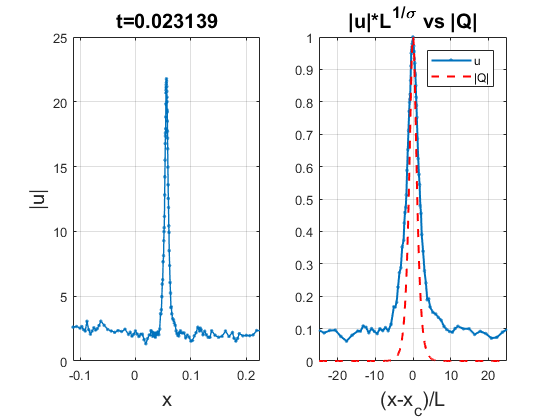}
\includegraphics[width=0.32\textwidth]{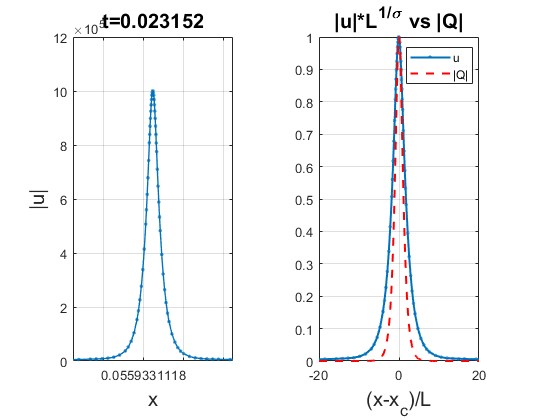} 
\includegraphics[width=0.32\textwidth]{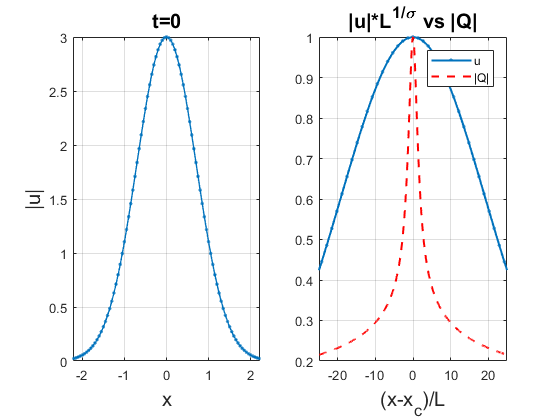} 
\includegraphics[width=0.32\textwidth]{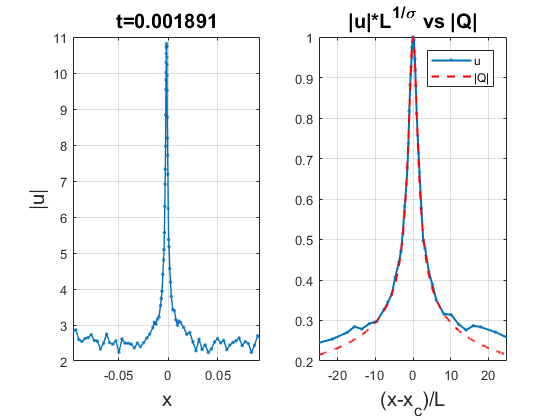}
\includegraphics[width=0.32\textwidth]{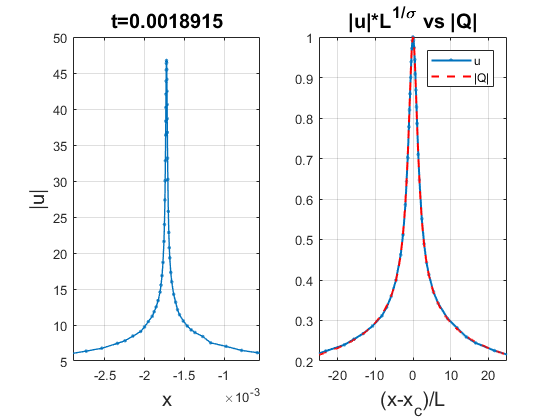} 
\caption{Formation of blow-up in Example 2 (polynomial decay) with $n=4$, $\beta=0.5$ and $\epsilon=0.1$: snapshots of time evolution for $u_0=3\, e^{-x^2}$
 (given in pairs of actual and rescaled solution) at different times. Each pair of graphs shows in solid blue the actual solution $|u|$ and its rescaled version 
 $L^{1/\sigma} |u|$, comparing it to the normalized ground state $Q$ in dashed red.  
Top row: $L^2$-critical ($\sigma =2$) case (blow-up  smooths  out and converges slowly to the ground state $Q$). 
Bottom row: $L^2$-supercritical ($\sigma =3$) case (blow-up profile becomes smooth and converges faster to the profile $Q_{1,0}$).} 
\label{blowup profile EX2}
\end{figure} 
\begin{figure}[ht]
\includegraphics[width=0.32\textwidth]{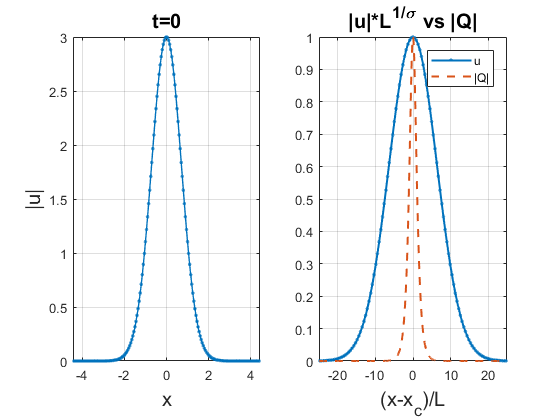} 
\includegraphics[width=0.32\textwidth]{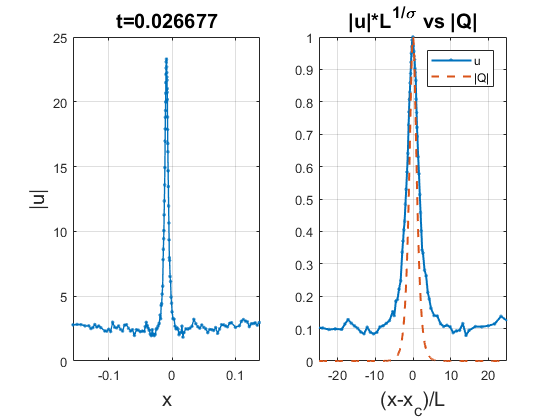}
\includegraphics[width=0.32\textwidth]{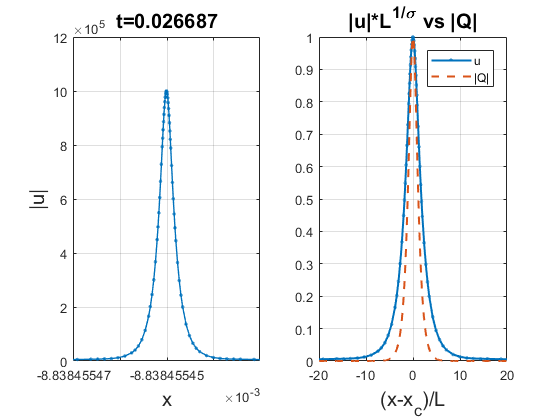} 
\includegraphics[width=0.32\textwidth]{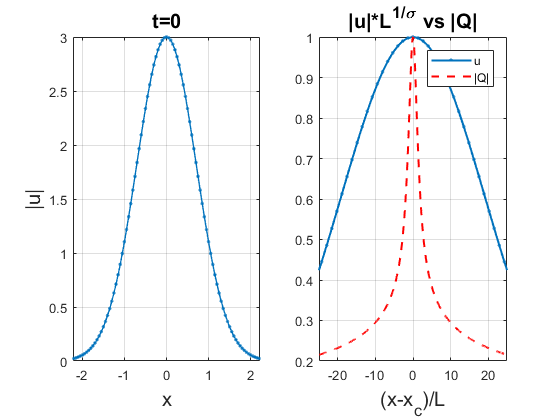} 
\includegraphics[width=0.32\textwidth]{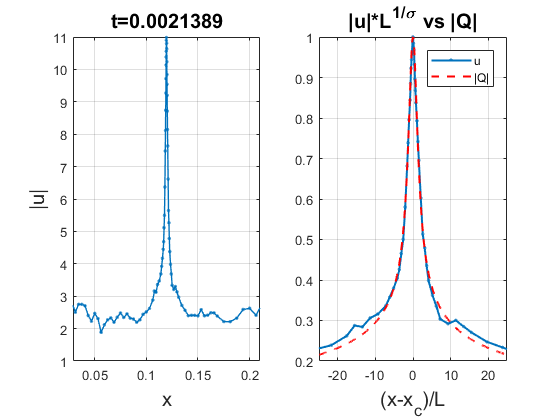}
\includegraphics[width=0.32\textwidth]{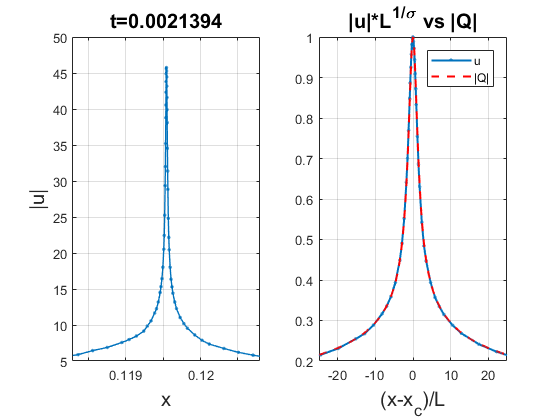} 
\caption{Formation of blow-up in Example 4 (exponential kernel) with  $\beta=0.5$ and $\epsilon=0.1$, for other details, see Figure \ref{blowup profile EX2}. }
\label{blowup profile EX4}
\end{figure} 

We start with tracking the blow-up profiles. Figure \ref{blowup profile EX2} shows snapshots of  blow-up solutions at different times for 
Example 2 (polynomial decay). 
The top row shows the case of the $L^2$-critical blow-up;  one can see that the solution  smooths  and converges slowly to the ground state $Q$. 
(The reason for slow convergence is the same as in the deterministic case: the shown regime is still far from the high focusing level needed to observe the 
convergence.)
The bottom row shows the blow-up in the $L^2$-supercritical case ($\sigma =3$). Observe that in this case the solution  smooths 
out and converges to the  (rescaled) profile solution $Q_{1,0}$ fast;  see the right bottom plot in Figure \ref{blowup profile EX2}. We also show the convergence of the solution in a homogeneous noise Example 4 (exponential decay) in Figure \ref{blowup profile EX4}, observing a similar convergence behavior. The other two examples are in Appendix B in Figures \ref{blowup profile EX1} and \ref{blowup behavior}.

To determine blow-up rates, we check the time dependence of $L(t)$. 
We take $L(t)=\|\nabla u(t)\|_{L^2}^{-(1-s)}$ (note that in the deterministic case it is typical to take $L(t)=\|u(t)\|_{L^\infty}^{-\sigma}$);  however, here, 
we define $L(t)$ via the $L^2$-norm of the gradient, since it gives a more stable computation for the parameter $a$ below; both definitions are equivalent for $s<1$, see \cite{F2015}). 
In the  left subplots of Figure \ref{blowup behavior EX2} we show the logarithmic dependence of $L(t)$ on $T-t$. 
Note that in both critical and supercritical cases, the  slope is  0.5, that is, solutions blow up with a rate $L(t) \sim \sqrt{T-t}$, possibly with some correction terms. 

\begin{figure}[ht]
\includegraphics[width=0.32\textwidth]{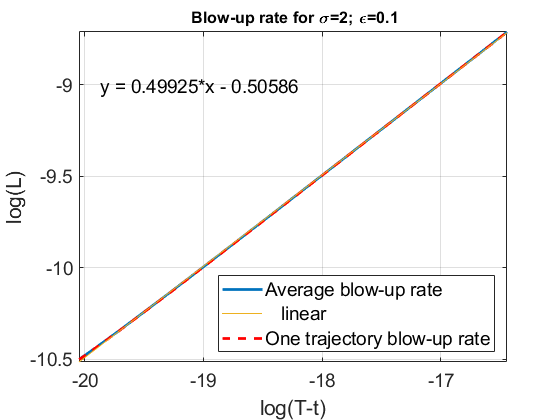}     \includegraphics[width=0.32\textwidth]{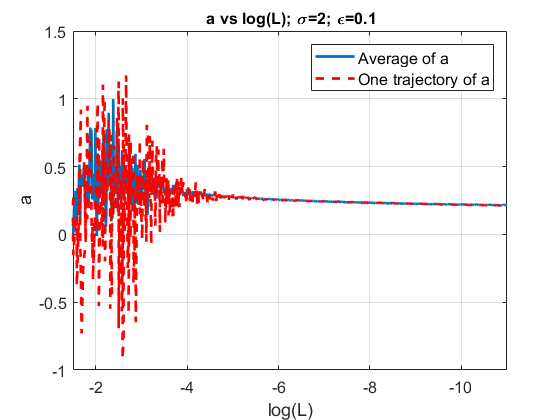}
\includegraphics[width=0.32\textwidth]{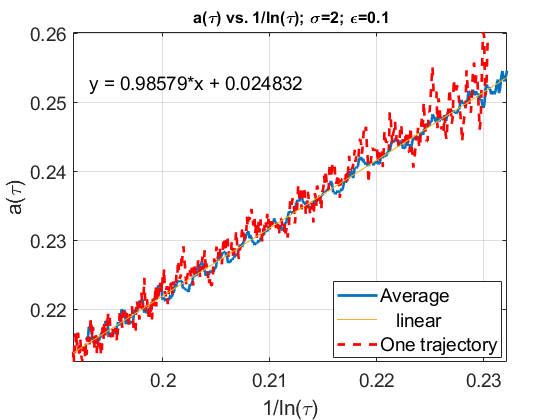} 
\includegraphics[width=0.32\textwidth]{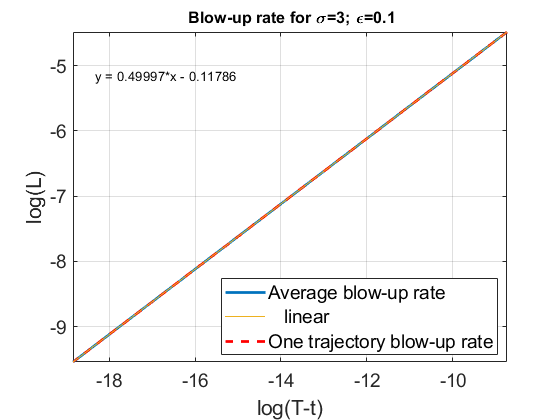} 
\includegraphics[width=0.32\textwidth]{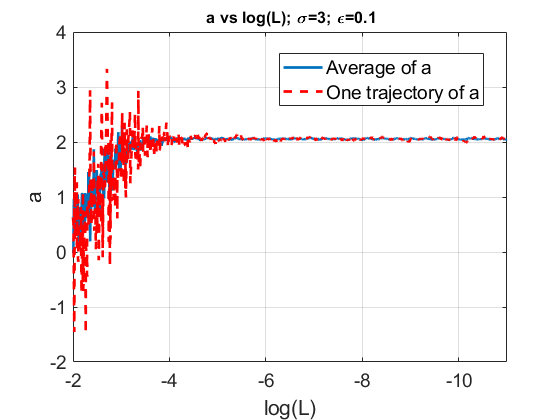}
\includegraphics[width=0.32\textwidth]{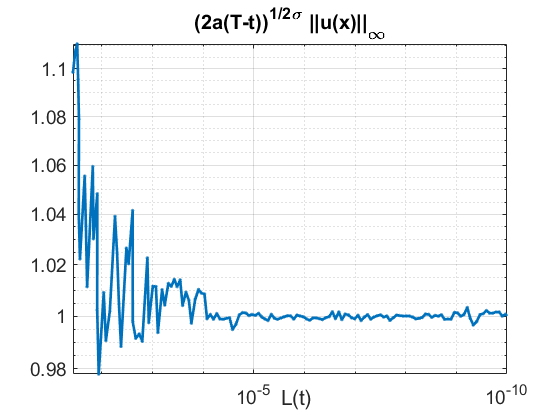} 
\caption{Blow-up rate tracking in Example 2 (polynomial decay) with $n=4$, $\beta=0.5$ and $\epsilon = 0.1$. Top row: $L^2$-critical ($\sigma =2$) case.
 Bottom row: $L^2$-supercritical case. Left: logarithmic dependence of $\log L(t)$ vs. $\log (T-t)$ (note in both cases the slope is $0.50$). 
 Middle: $a(\tau_m)$ vs. $\log L(\tau_m)$ (an extremely slow decay to zero in the top plot and rather fast leveling at a constant level in the bottom plot). 
 Right top: dependence $a(\tau)$ vs $1/ \ln (\tau)$ to confirm the logarithmic correction. Right bottom: fast convergence to a constant $1$ of the quantity
 $\|u\|_{L^\infty} \left( 2a(T-t)\right)^{\frac{1}{2\sigma} }$.} 
\label{blowup behavior EX2}
\end{figure} 
To investigate the correction terms we study the convergence of the parameter $a(t)$ as $t \to T(\omega)$, or equivalently, behavior of
 $a(\tau)$ as $\tau \to \infty$ in the rescaled time $\tau=\int_0^t \frac{1}{L^2(s)}ds$, or $\frac{d\tau}{dt} = \frac1{L^2(t)}$ (then, $t \rightarrow T$ is equivalent to $\tau \rightarrow \infty$). 
As discussed in the introduction, we set $a(t)= -L_t \, L$. A direct calculation (see also \cite{MRY2020}, \cite{SS1999}) yields
$$
a(t) = - \frac{2}{\alpha} \frac{1}{(\| \nabla u(t)\|_{L^2}^{2})^{\frac{2}{\alpha}+1}} \int |u(t)|^{2\sigma}\Im(u_{xx}(t)\,\bar{u}(t)) \, dx, \qquad \alpha=1+\frac{2}{\sigma}.
$$

In the discrete version, we take $\Delta \tau =\Delta t_0$ with $\tau_m = m \cdot \Delta t_0$ as a rescaled time. Then at the $m$th step we obtain values 
$L(\tau _m)$, $u(\tau_m)$, and $a(\tau_m)$. 
We track the behavior of $a(\tau)$ vs. $\log L(\tau)$, which is shown in the middle subplots of Figures \ref{blowup behavior EX2} and \ref{blowup behavior EX4} (see also Figures \ref{blowup behavior EX1} and \ref{blowup behavior}): the red dashed curve shows the behavior for one trajectory of $a$ and the blue solid line shows an averaged over 100 runs behavior. Observe that in the $L^2$-supercritical case (bottom middle plot), $a(\tau)$ converges to a constant after 4 orders of magnitude of $L$, while in the $L^2$-critical case (top middle plot) $a(\tau)$ decays very slowly (to zero). This is similar to the deterministic case. We also track the dependence of $a(\tau)$ on $1/ \ln \tau $ (as in the deterministic case) to show that the correction to the rate in the $L^2$-critical case is slower than  any polynomial power correction. This gives a confirmation that the correction is of a logarithmic order. Our conjecture is that in the SNLS equation,  the correction in the $L^2$-critical case is a double log correction \eqref{E:loglog}, similar to the deterministic case, though it is a highly nontrivial task to show it (as in the deterministic case) and requires further studies. Nevertheless, the above findings give partial confirmation to the blow-up dynamics stated in Conjecture \ref{C:1}. 
 
In the $L^2$-supercritical case, since the convergence of $a(t)$ to a constant is very fast, solving the ODE $a(t)=-L_t L$ with $L(T)=0$, gives 
$L(t)=\sqrt{2a(T-t)}$. 
The bottom right subplot of Figure \ref{blowup behavior} confirms this,  justifying the rate in Conjecture \ref{C:2}.

\begin{figure}[ht]
\includegraphics[width=0.32\textwidth]{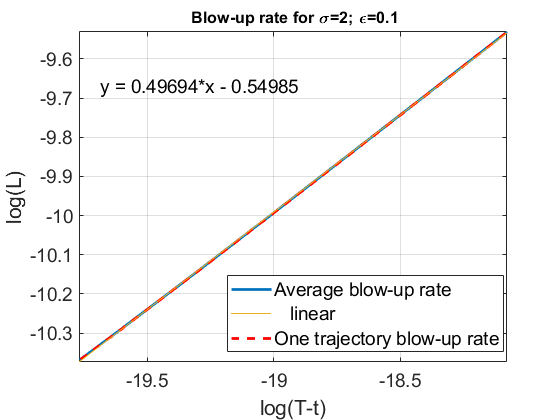}     \includegraphics[width=0.32\textwidth]{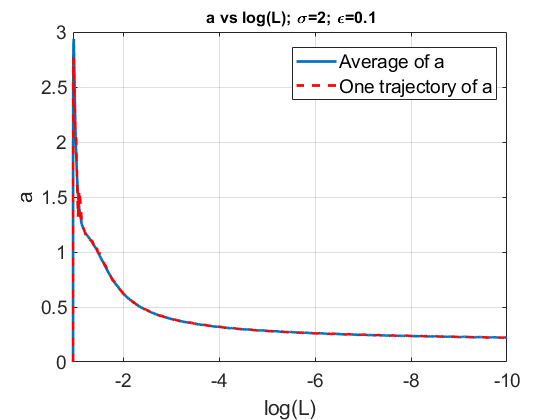}
\includegraphics[width=0.32\textwidth]{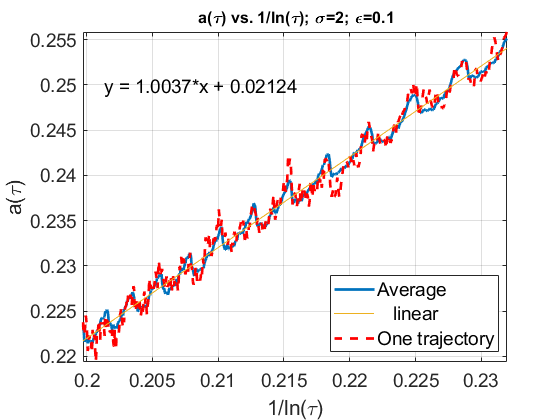} 
\includegraphics[width=0.32\textwidth]{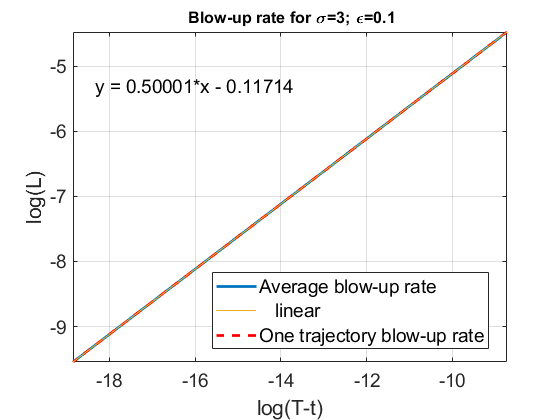} 
\includegraphics[width=0.32\textwidth]{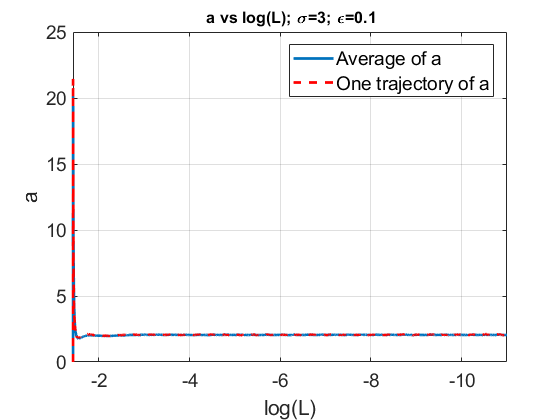}
\includegraphics[width=0.32\textwidth]{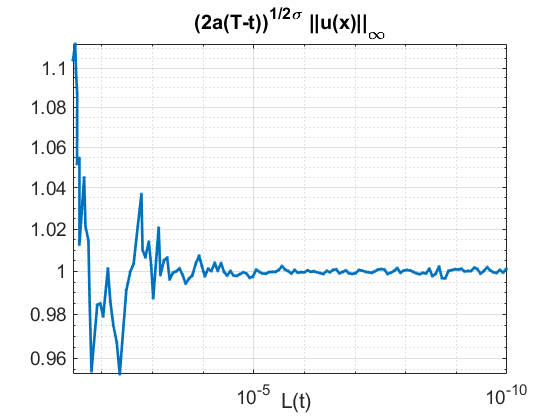} 
\caption{Blow-up rate tracking in Example 4 (exponential kernel) with $\beta=0.5$ and $\epsilon = 0.1$. Top row: $L^2$-critical ($\sigma =2$) case.
 Bottom row: $L^2$-supercritical case. Left: logarithmic dependence of $\log L(t)$ vs. $\log (T-t)$ (note in both cases the slope is $0.50$). 
 Middle: $a(\tau_m)$ vs. $\log L(\tau_m)$ (decay to zero in the top plot and an almost immediate leveling at a constant level in the bottom plot). 
 Right top: dependence $a(\tau)$ vs $1/ \ln (\tau)$ to confirm the logarithmic correction. Right bottom: fast convergence to a constant 1 of the quantity
 $\|u\|_{L^\infty} \left( 2a(T-t)\right)^{\frac{1}{2\sigma} }$.} 
\label{blowup behavior EX4}
\end{figure}

In \cite{MRY2020} we observed that the noise affects the location of the blow-up center, and shifts it away from the origin (or from the original peak location), 
making it a random variable distributed normally. In this work we also check the location of blow-up centers for different runs;  see 
Figures \ref{EX1-location}-\ref{EX4-location} for the distribution curves (we did $1000$ runs, except for the right graph in Figure \ref{EX4-location} with $N_t=2000$ runs) and Tables \ref{Tab:1}-\ref{Tab:2} 
for the mean and variance  in  each of our four examples in the $L^2$-critical and supercritical cases ($\sigma =3$) with different correlation parameters $\beta$. 
Our findings confirm the normal distribution of the curves (conditionally to the existence of blow-up), and that the shifting is more prominent in the $L^2$-supercritical case (larger variance); the variance also grows with the correlation parameter $\beta$ in the supercritical case. The mean of the distribution varies but remains quite small; we think that with a larger number of trials it would converge to zero. 

We conclude that 
the spatially-correlated noise has little effect on the blow-up dynamics, similar to our findings in \cite{MRY2020} for the approximation of the space-time white noise. 
In particular, the driving noise considered in this paper has almost no effect on the blow-up profiles and rates, but shifts the location of the blow-up center.

\section{Conclusion}\label{conclusion}
In this  paper we investigate how solutions behave in the 1D focusing SNLS subject to a multiplicative stochastic perturbation driven by a space-correlated Wiener process. 
We consider four different examples of space-correlation, where the noise is either driven by $\mathcal{Q}$-Brownian motions, thus, with a trace-class (diagonal) 
covariance operator, or by a homogeneous Wiener process, where the covariance matrix is no longer diagonal and has longer range effects. Due to the Stratonovich 
integral, the mass is conserved in this stochastic setting;  however, the energy (or Hamiltonian) changes in time. We observe that in our examples the energy grows 
first and then levels off to a horizontal asymptote whose value is close to a corresponding one in the case of the space-time white noise (to be precise, the 
approximation of it). 
We then investigate the effect of the spatially-correlated noise onto the probability of blow-up. We note that a larger strength of the stochastic perturbation
 and a greater concentration close to the origin for  spatially 
correlated driving noises tend to decrease the probability of blow-up in both $L^2$-critical and supercritical cases, though bearing more influence in the critical case.
 In the supercritical case we also observe that the spatially-correlated noise can drive the evolution towards the blow-up for the initial data that in the deterministic 
 setting would generate solutions existing globally and scattering (to linear solutions). This is in agreement with results proved in \cite{dBD2005} for a regular driving noise, and with   our previous  numerical findings in \cite{MRY2020} for the
 SNLS equation driven by an approximation of the space-time white noise.
Finally, we study the blow-up dynamics, and confirm that the spatially-correlated noise as we consider in this work has almost no influence on profiles or rates of the blow-up, and only affects the location of the blow-up center. This is  similar to our findings for the space-time white noise in \cite{MRY2020}. 
Once the evolution is driven into the blow-up regime, the dynamics except for the blow-up center location is the same as in the deterministic case.

\section*{Appendix A}\label{S:A}
In this appendix we show the distribution of the locations of blow-up center $x_c$ as its being influenced by the noise. 
We provide Figures \ref{EX1-location}-\ref{EX4-location} for each of our four examples when we run $N_t=1000$ trials in most of them except for the right part in Figure \ref{EX4-location}, where we did $N_t=3000$ trials (note how much more accurate the convergence to the normal distribution is, however, this takes significantly larger computational efforts.) In Tables \ref{Tab:1}-\ref{Tab:2} 
we record the mean $\mu_{x_c}$ and variance $\sigma^2_{x_c}$ of the normal distribution that we obtain for the location random variable $x_c$. 
Observe that the variance noticeably increases in Examples 3 and 4 in the $L^2$-critical case as $\beta$ increases (similar increase is happening in these Examples in the $L^2$-supercritcal case, see Table \ref{Tab:2}).

\begin{figure}[ht]
\includegraphics[width=0.45\textwidth]{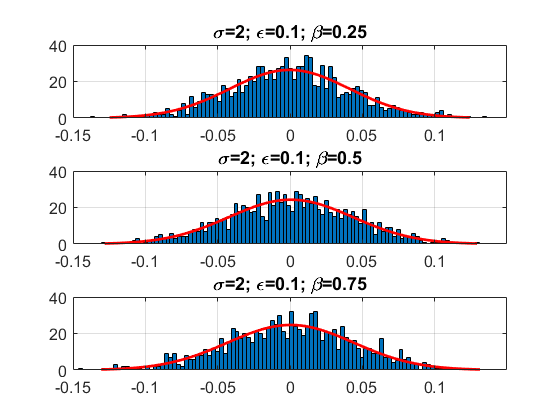} \includegraphics[width=0.45\textwidth]{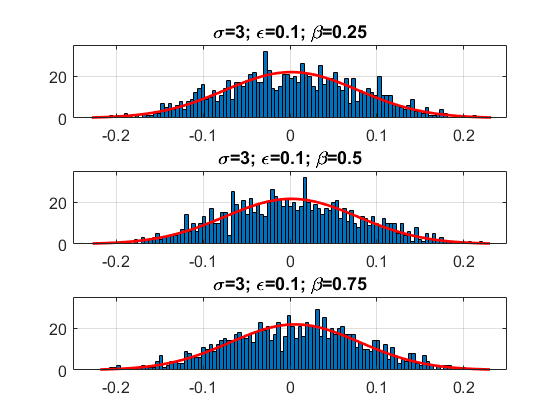}
\caption{Location distribution of the blow-up center $x_c$ in Example 1 for different $\beta$ from $1000$ runs. Here, $u_0=3 e^{-x^2}$ and $\epsilon=0.1$.  
Left: $\sigma=2$. Right: $\sigma=3$.} 
\label{EX1-location}
\end{figure} 

\begin{figure}[ht]
\includegraphics[width=0.45\textwidth]{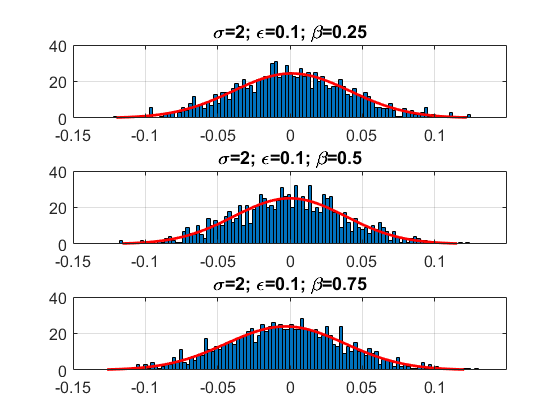} \includegraphics[width=0.45\textwidth]{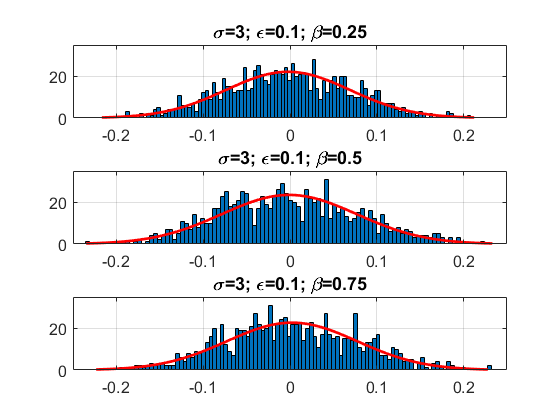}
\caption{Location distribution of the blow-up center $x_c$ in Example 2 (polynomial decay)  with $n=2$ for different $\beta$ from $1000$ runs. 
Here, $u_0=3 e^{-x^2}$ and $\epsilon=0.1$.  
Left: $\sigma=2$. Right: $\sigma=3$.} 
\label{EX2-location}
\end{figure}

\begin{table}[!htb]
\parbox{.48\linewidth}{
\centering
\quad
\begin{tabular}{|c|c|c|c|}
\hline
 $\sigma$   & $\beta$ & $\mu_{x_c}$ & $\sigma^2_{x_c}$   \\
 \hline
 $2$& $0.25$  & $-2.4e-4$ & $0.0013$   \\
 \hline
 $2$ & $0.5$ &$-5.7e-4$& $0.0014$    \\
 \hline
 $2$ & $0.75$& $-4.1e-4$ &$0.0014$   \\
 \hline
  $3$& $0.25$  & $0.0017$ & $0.0024$   \\
 \hline
 $3$ & $0.5$ &$0.0014$& $0.0024$    \\
 \hline
 $3$ & $0.75$& $0.0059$ &$0.0024$   \\
 \hline
\end{tabular}
\linebreak
}
\parbox{.48\linewidth}{
\qquad
\begin{tabular}{|c|c|c|c|}
\hline
 $\sigma$   & $\beta$ & $\mu_{x_c}$ & $\sigma^2_{x_c}$  \\
 \hline
 $2$& $0.25$  & $0.0011$ & $0.0013$   \\
 \hline
 $2$ & $0.5$ &$-1.3e-4$& $0.0012$    \\
 \hline
 $2$ & $0.75$& $-0.003$ &$0.0013$   \\
 \hline
  $3$& $0.25$  & $-0.0023$ & $0.0023$   \\
 \hline
 $3$ & $0.5$ &$-9e-4$& $0.0025$    \\
 \hline
 $3$ & $0.75$& $0.0023$ &$0.0024$   \\
 \hline
\end{tabular}
\linebreak
}
\caption{Mean $\mu_{x_c}$ and variance $\sigma^2_{x_c}$ of the location of blow-up center random variable $x_c$. Left: Example 1 (Gaussian decay) shown in Figure \ref{EX1-location}. 
Right: Example 2 (polynomial decay, $n=2$) shown in Figure \ref{EX2-location}.} 
\label{Tab:1}
\end{table}

\begin{figure}[ht]
\includegraphics[width=0.45\textwidth]{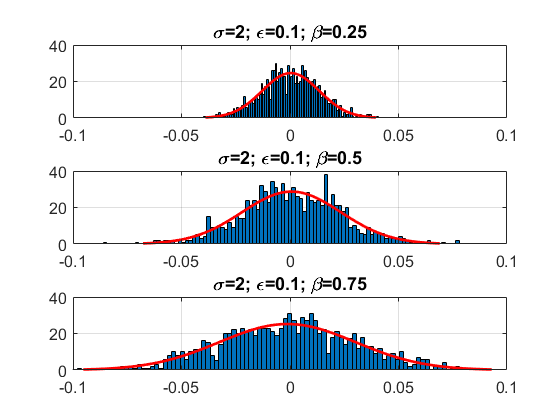} \includegraphics[width=0.45\textwidth]{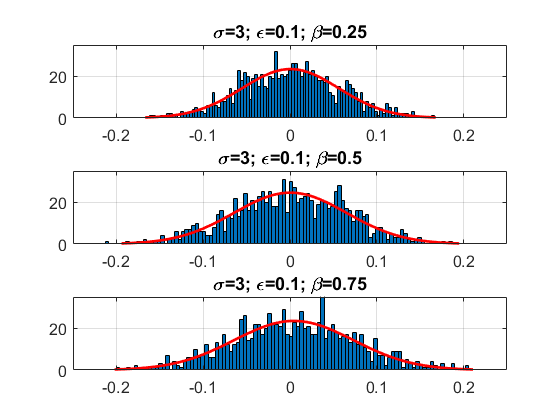}
\caption{Location distribution of the blow-up center $x_c$ in Example 3 (Riesz kernel) for different $\beta$ from $1000$ runs. Here, 
$u_0=3 e^{-x^2}$ and $\epsilon=0.1$.  
Left: $\sigma=2$. Right: $\sigma=3$.} 
\label{EX3-location}
\end{figure} 

\begin{figure}[ht]
\includegraphics[width=0.45\textwidth]{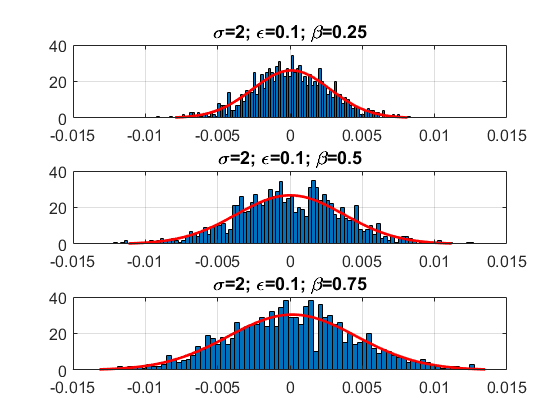} \includegraphics[width=0.45\textwidth]{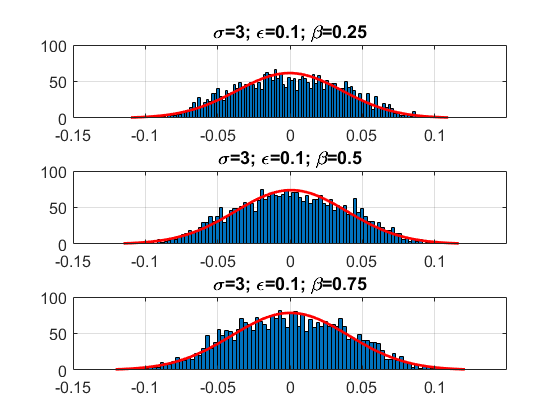}
\caption{Location distribution of the blow-up center $x_c$ in Example 4 (exponential kernel) for different $\beta$. Here, $u_0=3 e^{-x^2}$ and $\epsilon=0.1$.  
 Left: $\sigma=2$ with $1000$ runs. Right: $\sigma=3$ with $3000$ runs (compare this much smoother convergence to a normal distribution with Figure \ref{EX3-location} with 1000 runs). 
} 
\label{EX4-location}
\end{figure} 

\begin{table}[!htb]
\parbox{.48\linewidth}{
\centering
\quad
\begin{tabular}{|c|c|c|c|}
\hline
 $\sigma$   & $\beta$ & $\mu_{x_c}$ & $\sigma^2_{x_c}$   \\
 \hline
 $2$& $0.25$  & $3.3e-4$ & $4.1e-4$   \\
 \hline
 $2$ & $0.5$ &$6.6e-4$& $7.2e-4$    \\
 \hline
 $2$ & $0.75$& $-0.0012$ &$9.9e-4$   \\
 \hline
  $3$& $0.25$  & $5.3e-4$ & $0.0018$   \\
 \hline
 $3$ & $0.5$ &$3.5e-4$& $0.0020$    \\
 \hline
 $3$ & $0.75$& $0.0044$ &$0.0022$   \\
 \hline
\end{tabular}
\linebreak
}
\parbox{.48\linewidth}{
\qquad
\begin{tabular}{|c|c|c|c|}
\hline
 $\sigma$   & $\beta$ & $\mu_{x_c}$ & $\sigma^2_{x_c}$   \\
 \hline
 $2$& $0.25$  & $8.8e-5$ & $8.5e-5$   \\
 \hline
 $2$ & $0.5$ &$5.4e-6$& $1.2e-4$    \\
 \hline
 $2$ & $0.75$& $1.6e-4$ &$1.4e-4$   \\
 \hline
  $3$& $0.25$  & $-3.3e-4$ & $6.7e-4$   \\
 \hline
 $3$ & $0.5$ &$7.9e-4$& $7.0e-4$    \\
 \hline
 $3$ & $0.75$& $2.1e-4$ &$7.4e-4$   \\
 \hline
\end{tabular}
\linebreak
}
\caption{Mean $\mu_{x_c}$ and variance $\sigma^2_{x_c}$ of the location of blow-up center random variable $x_c$. Left: Example 3 (Riesz kernel) shown in Figure \ref{EX3-location}. Right: Example 4 (exponential kernel) shown in Figure \ref{EX4-location}.}
\label{Tab:2}
\end{table}

\clearpage



\section*{Appendix B}

Here we show figures of blow-up dynamics (convergence of profiles and rates) in Examples 1 and 3.


\begin{figure}[ht]
\includegraphics[width=0.32\textwidth]{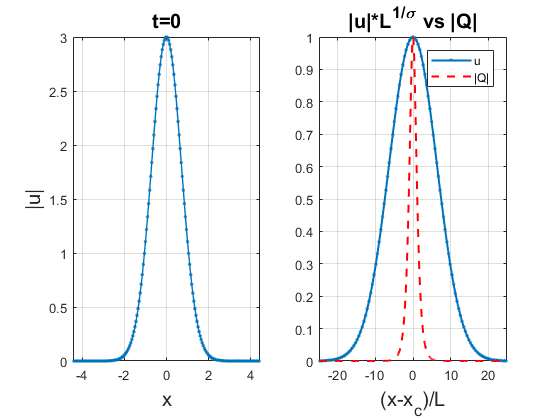} 
\includegraphics[width=0.32\textwidth]{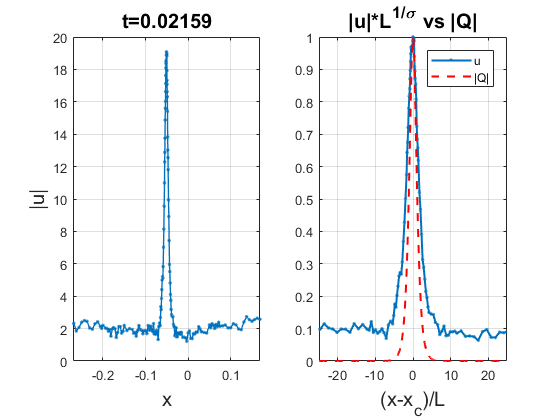}
\includegraphics[width=0.32\textwidth]{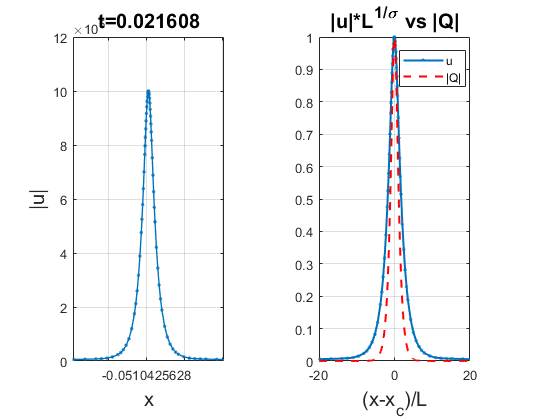} 
\includegraphics[width=0.32\textwidth]{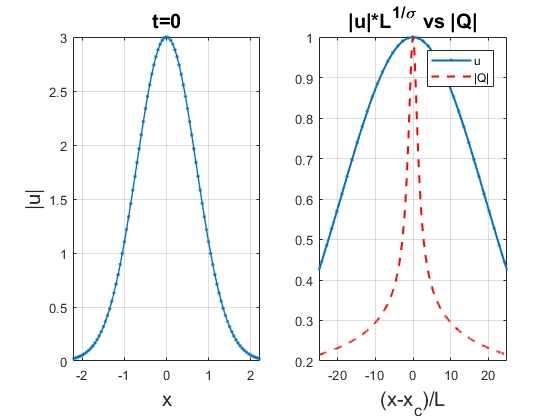} 
\includegraphics[width=0.32\textwidth]{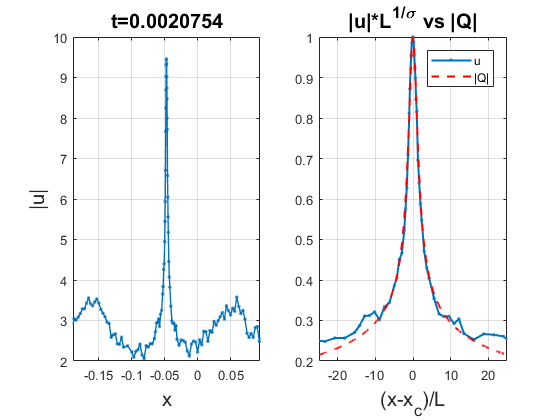}
\includegraphics[width=0.32\textwidth]{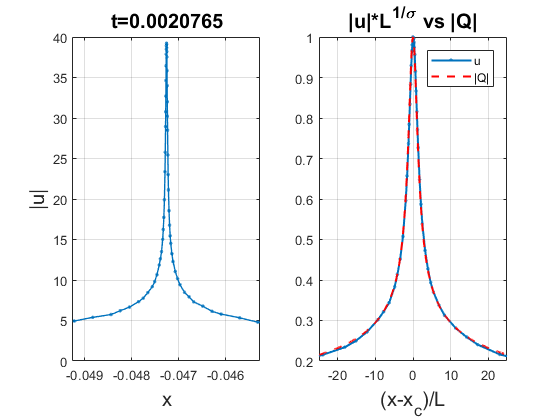} 
\caption{Formation of blow-up in Example 1 (exponential decay) with  $\beta=0.5$ and $\epsilon=0.1$: snapshots of time evolution for $u_0=3\, e^{-x^2}$
 (given in pairs of actual and rescaled solution) at different times. Each pair of graphs shows in solid blue the actual solution $|u|$ and its rescaled version 
 $L^{1/\sigma} |u|$, comparing it to the normalized ground state $Q$ in dashed red.  
Top row: $L^2$-critical ($\sigma =2$) case (blow-up  smooths  out and converges slowly to the ground state $Q$). 
Bottom row: $L^2$-supercritical ($\sigma =3$) case (blow-up profile becomes smooth and converges faster to the profile $Q_{1,0}$).} 
\label{blowup profile EX1}
\includegraphics[width=0.32\textwidth]{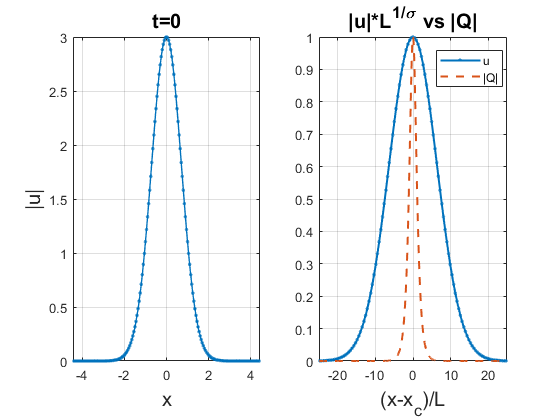} 
\includegraphics[width=0.32\textwidth]{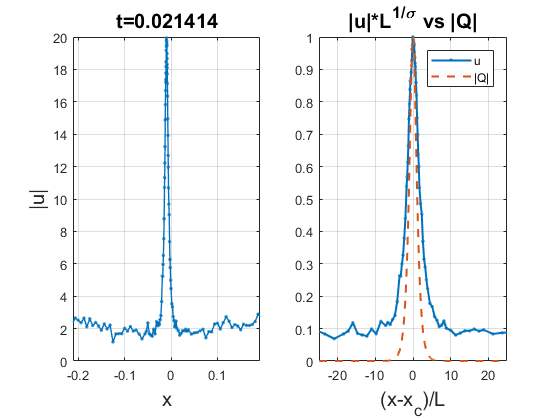}
\includegraphics[width=0.32\textwidth]{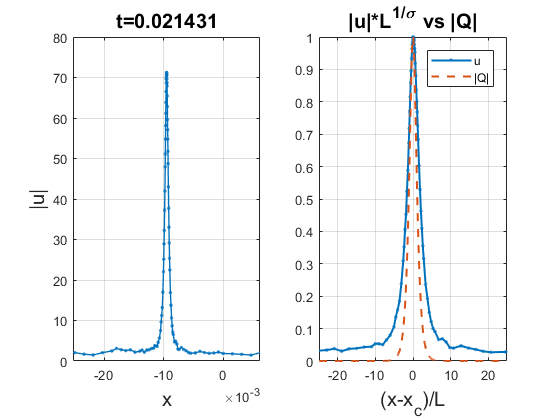} 
\includegraphics[width=0.32\textwidth]{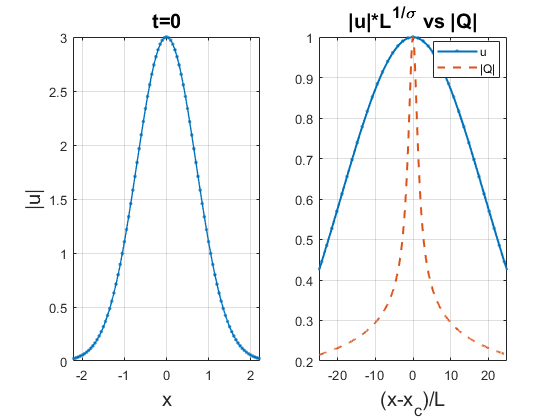} 
\includegraphics[width=0.32\textwidth]{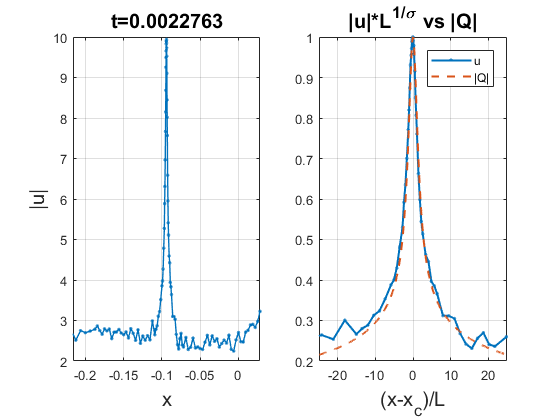}
\includegraphics[width=0.32\textwidth]{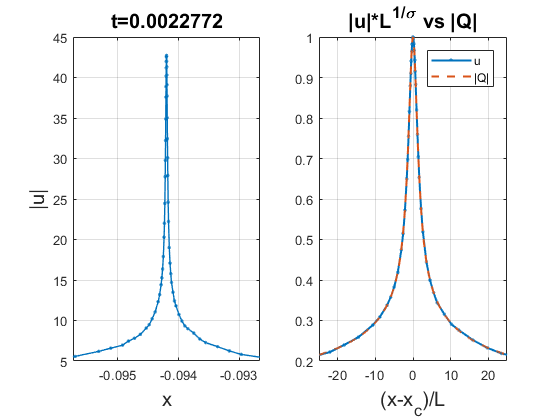} 
\caption{Formation of blow-up in Example 3 (Riesz kernel) with  $\beta=0.5$ and $\epsilon=0.1$: snapshots of time evolution for $u_0=3\, e^{-x^2}$. For other details, see Figure \ref{blowup profile EX1}. }
\label{blowup profile}
\end{figure} 
\begin{figure}[ht]
\includegraphics[width=0.32\textwidth]{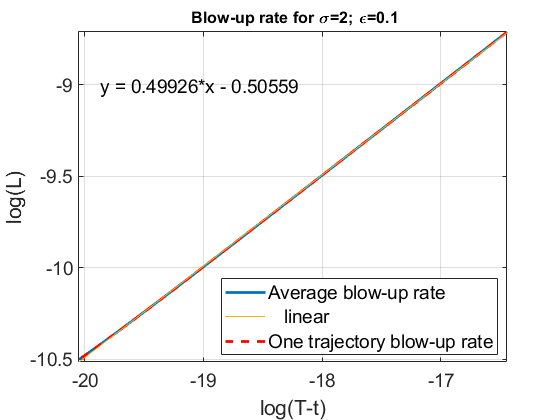}     \includegraphics[width=0.32\textwidth]{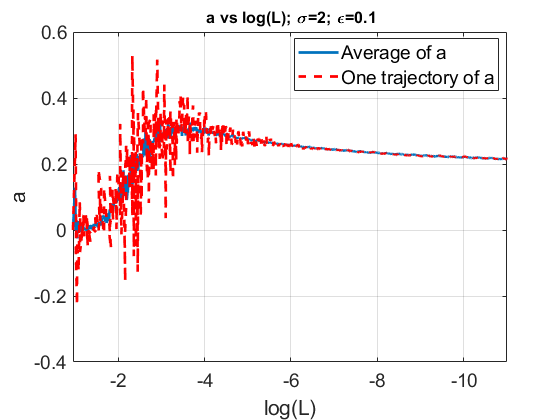}
\includegraphics[width=0.32\textwidth]{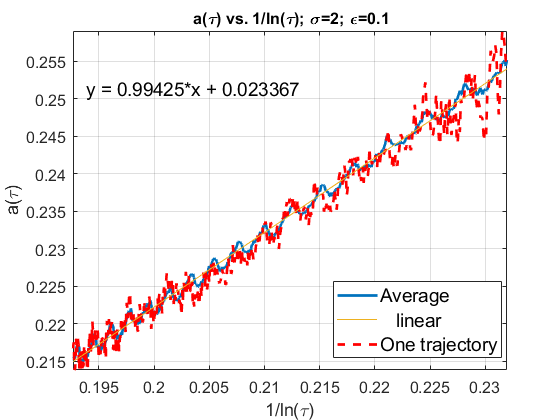} 
\includegraphics[width=0.32\textwidth]{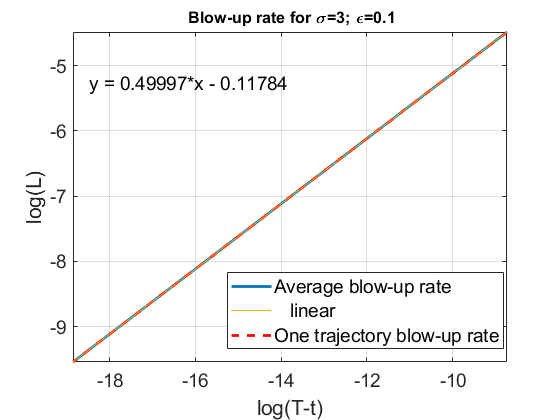} 
\includegraphics[width=0.32\textwidth]{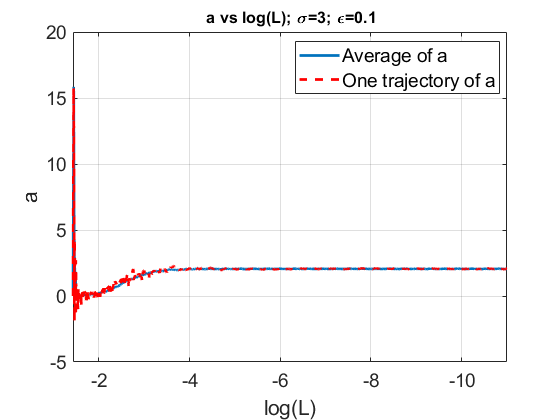}
\includegraphics[width=0.32\textwidth]{SNLS_1d7p_rate_EX1.png} 
\caption{Blow-up rate tracking in Example 1 (exponential decay) with $\beta=0.5$ and $\epsilon = 0.1$. Top row: $L^2$-critical ($\sigma =2$) case.
 Bottom row: $L^2$-supercritical case. Left: logarithmic dependence of $\log L(t)$ vs. $\log (T-t)$ (note in both cases the slope is $0.50$). 
 Middle: $a(\tau_m)$ vs. $\log L(\tau_m)$ (an extremely slow decay to zero in the top plot and rather fast leveling at a constant level in the bottom plot). 
 Right top: dependence $a(\tau)$ vs $1/ \ln (\tau)$ to confirm the logarithmic correction. Right bottom: fast convergence to a constant of the quantity
 $\|u\|_{L^\infty} \left( 2a(T-t)\right)^{\frac{1}{2\sigma} }$.} 
\label{blowup behavior EX1}
\end{figure} 
\begin{figure}[ht]
\includegraphics[width=0.32\textwidth]{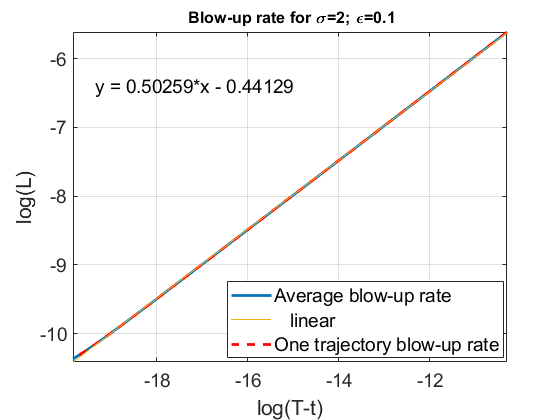}     \includegraphics[width=0.32\textwidth]{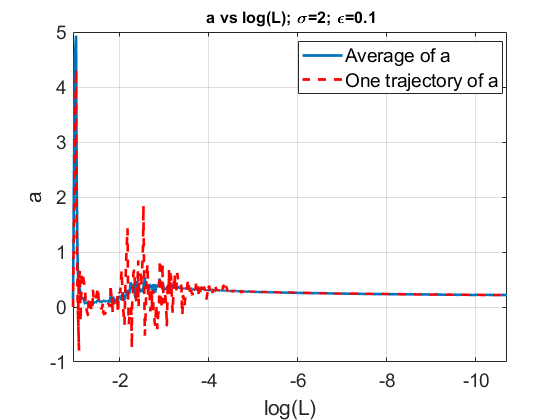}
\includegraphics[width=0.32\textwidth]{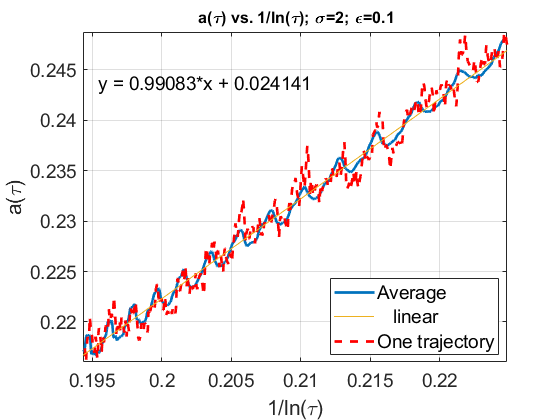} 
\includegraphics[width=0.32\textwidth]{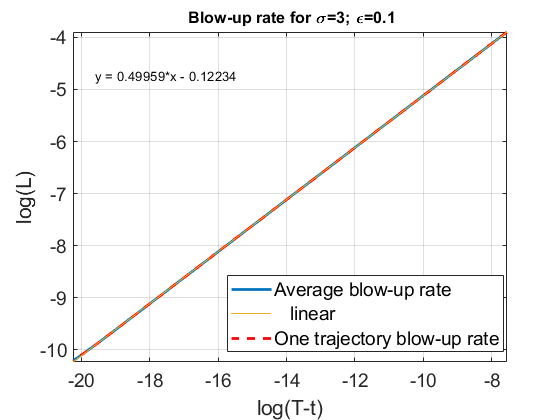} 
\includegraphics[width=0.32\textwidth]{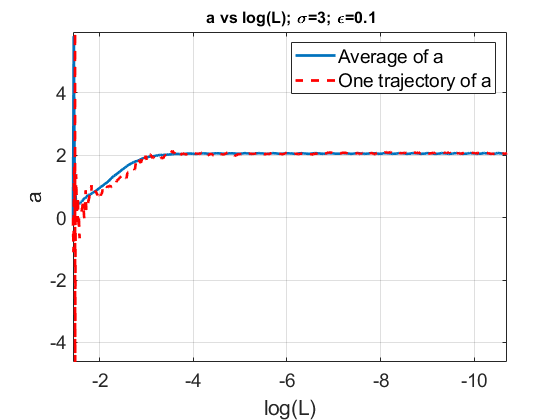}
\includegraphics[width=0.32\textwidth]{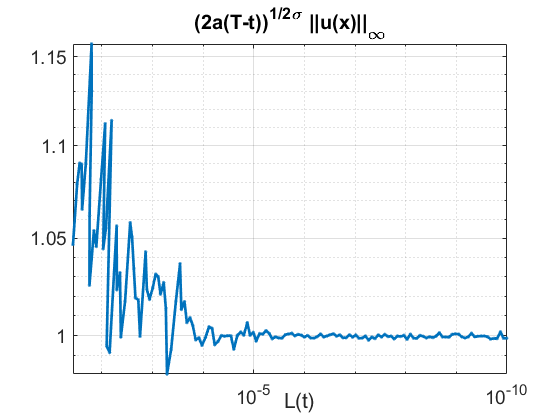} 
\caption{Blow-up rate tracking in Example 3 (Riesz kernel) with $\beta=0.5$ and $\epsilon = 0.1$. For details, see Figure \ref{blowup behavior EX1}. }
\label{blowup behavior}
\end{figure}

\clearpage

\bibliography{SNLS_bib} 
\bibliographystyle{abbrv}

\end{document}